\newcommand{\blue}[1]{\textcolor{black}{#1}}
\tikzstyle{Template_A}=[draw,circle,thick, minimum size = 0.5cm, color=black,inner sep=0pt]
\tikzstyle{Template_B}=[color=black,draw]
\tikzstyle{vertex}=[draw,circle,fill=black, color=black,inner sep=1.5pt]
\tikzstyle{vertex2}=[draw,circle,fill,inner sep=2pt]
\tikzstyle{vertex3}=[draw,circle,fill,inner sep=0.5pt]
\tikzstyle{S&L}=[draw,circle,minimum height=0.5cm]
\tikzstyle{labeled}=[draw,circle,color=black,inner sep=0.25pt]
\tikzstyle{labeledB}=[draw,circle,inner sep=0.25pt]
\newtheorem{DE}{Definition}[section]
\newtheorem{theorem}[DE]{Theorem}
\newtheorem{lemma}[DE]{Lemma}
\newtheorem{conjecture}[DE]{Conjecture}
\newtheorem{corollary}[DE]{Corollary}
\theoremstyle{break}\theorembodyfont{\rmfamily}}
\theoremstyle{break}\theorembodyfont{\rmfamily}}
\newcounter{claim}
\newenvironment{proof}[1][]%
{\noindent{\it Proof. }{#1}{}}{\qed\vspace{2ex}}
\newenvironment{claim}[1][]%
{\refstepcounter{claim}\vspace{1ex} {(\it\arabic{section}.\arabic{claim}) {#1}{}}\it}{\vspace{2ex}}
\newenvironment{proofclaim}[1][]%
{\noindent {}{#1}}{This proves~(\arabic{section}.\arabic{claim}).\vspace{2ex}}
\newcommand {\sm} {\setminus}
\newcommand{\qed}{\relax\ifmmode\hskip2em\Box\else\unskip\nobreak\hfill$\Box$\fi}
\author{Ch\'inh T. Hoàng\thanks{Department of Physics and Computer Science, Wilfrid Laurier University, 75 University Avenue West Waterloo, Ontario, Canada, N2L3C5}~ and Cléophée Robin\footnotemark[1]}
\title{A Closure Lemma for tough graphs and Hamiltonian degree conditions\\\small}
\begin{document}
	
	\maketitle	
	\begin{abstract}
			The closure of a graph $G$ is the graph $G^*$ obtained from $G$ by repeatedly adding edges between pairs of non-adjacent vertices whose degree sum is at least $n$, where $n$ is the number of vertices of $G$. The well-known Closure Lemma proved by Bondy and Chv\'atal states that a graph $G$ is Hamiltonian if and only if its closure $G^*$ is. This lemma can be used to prove several classical results in Hamiltonian graph theory. We prove a version of the Closure Lemma for tough graphs. A graph $G$ is $t$-tough if for any set $S$ of vertices of $G$, the number of components of $G-S$ is at most $\frac{|S|}{t}$. A Hamiltonian graph must necessarily be 1-tough. Conversely, Chv\'atal conjectured that there exists a constant $t$ such that every $t$-tough graph is Hamiltonian. The {\it $t$-closure} of a graph $G$ is the graph $G^{t*}$ obtained from $G$ by  repeatedly adding edges between pairs of non-adjacent vertices whose degree sum is at least $n-t$. We prove that, for $t\geq 2$, a $\frac{3t-1}{2}$-tough graph $G$ is Hamiltonian if and only if its $t$-closure $G^{t*}$ is. Ho\`ang conjectured the following: Let $G$ be a graph with degree sequence $d_1 \leq d_2 \leq \ldots \leq d_n$; then $G$ is Hamiltonian if $G$ is $t$-tough and, for all $ i <\frac{n}{2},\mbox{ if } d_i\leq i \mbox{ then } d_{n-i+t}\geq n-i$. This conjecture is analogous to the well known theorem of Chv\'atal on Hamiltonian ideals. Ho\`ang proved the conjecture for $t \leq 3$. Using the closure lemma for tough graphs, we prove  the conjecture for $t = 4$.
	\end{abstract}
	
	\section{Introduction}
	A graph $G$ is {\it Hamiltonian} if it contains an {\it Hamiltonian cycle}, that is a cycle (not necessarily induced) containing all vertices of $G$. A {\it Hamiltonian path} in $G$ is a path (not necessarily induced) containing all vertices of $G$.

	Let $G=(V,E)$ be a graph and $X$ be subset of $V$. We denote by $G[X]$, the subgraph of $G$ induced by $X$. A subset $C$ of vertices of $G$ is called a {\it cut-set} of $G$ if $G[V\sm C]$ is disconnected. A graph $G$ is said to be $t$-tough if for any cut-set $C$ of $G$, $ tc(G[V\sm C])\leq |C|$ where $c(H)$ denotes the number of components of a graph $H$. In this paper, we consider only graphs on at least three vertices. 
	
	In 1973, Chv\'atal conjecture the following:
	
	\begin{conjecture}[Chv\'atall~\cite{chvatal_tough_1973}]\label{c:chvatal}
		
		There exists a constant $t$ such that every $t$-tough graph is Hamiltonian.
	\end{conjecture}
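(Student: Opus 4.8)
The plan is to use the $t$-closure as a reduction engine and hope that high enough toughness pushes the closure all the way to a Hamiltonian (ideally complete) graph. Fix a candidate constant $t_0$ and let $G$ be $t_0$-tough. Choosing $t$ so that $t_0 \ge \frac{3t-1}{2}$, the closure lemma for tough graphs guarantees that $G$ is Hamiltonian if and only if $G^{t*}$ is; so it would suffice to prove that the $t$-closure of any sufficiently tough graph is Hamiltonian. First I would collect the standard structural consequences of toughness: a non-complete $t_0$-tough graph has connectivity at least $2t_0$, hence minimum degree at least $2t_0$, and relatively few and large components after deleting any cut-set. These facts would be fed into the closure operation to estimate how many edges are forced.

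Next I would try to combine the closure with a classical sufficient condition. The cleanest target is the Chv\'atal--Erd\H{o}s criterion, that $\kappa(G)\ge\alpha(G)$ implies Hamiltonicity: since toughness forces $\kappa \ge 2t_0$, one would attempt to bound the independence number of $G$ (or of $G^{t*}$) in terms of its toughness, so that the two quantities meet. Failing a direct bound, the fallback is a longest-cycle argument: take a longest cycle $C$ in $G^{t*}$, consider the components of $G^{t*}-V(C)$ and their attachment vertices on $C$, and use the degree-sum threshold $n-t$ together with the toughness inequality $t_0\,c(G-S)\le|S|$ to force $C$ to be spanning.

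The main obstacle---and the reason this remains a conjecture rather than a theorem---is that toughness does not make degrees grow with $n$. A $t_0$-tough graph may have every vertex of degree exactly $2t_0$, a constant independent of $n$, in which case no pair of non-adjacent vertices reaches the threshold $n-t$ and the $t$-closure gains no edges at all. Thus the closure technique, powerful enough to drive the conditional results of this paper (toughness \emph{together with} a degree-sequence hypothesis), cannot on its own close the gap; any unconditional argument must control the interaction between local sparseness and global toughness, and this is exactly where every known approach stalls. The difficulty is sharpened by the known bounds: Bauer, Broersma and Veldman exhibit non-Hamiltonian graphs whose toughness approaches $\frac{9}{4}$, so the constant, if it exists, is at least $\frac{9}{4}$, while unconditional Hamiltonicity from toughness alone has been established only for restricted classes such as chordal graphs. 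I would therefore expect a full proof to require a genuinely new structural idea beyond the closure method presented here.
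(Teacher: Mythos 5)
You have been asked to prove Conjecture~\ref{c:chvatal}, which is an \emph{open problem}: the paper itself states immediately after the conjecture that ``this conjecture is still open,'' and it offers no proof of it. There is therefore no proof in the paper to compare your attempt against, and your proposal --- correctly --- does not constitute a proof either. Your own closing paragraph concedes this, and that self-assessment is accurate, so the honest verdict is that the proposal fails, for exactly the reason you name: toughness bounds the minimum degree only from below by a constant ($\delta(G) \geq 2t_0$ for a non-complete $t_0$-tough graph), not by anything growing with $n$. A $t_0$-tough graph can have all degrees bounded by a constant independent of $n$, in which case no pair of non-adjacent vertices satisfies $d(x)+d(y)\geq n-t$, the $t$-closure adds no edges at all, and Lemma~\ref{l:t-closure} reduces the problem to itself. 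The Chv\'atal--Erd\H{o}s fallback collapses for the same reason: in such bounded-degree tough graphs $\kappa(G)$ stays bounded while $\alpha(G)$ grows linearly in $n$, so $\kappa \geq \alpha$ is hopeless, and the toughness inequality $t_0\,c(G-S)\leq |S|$ applied to a longest cycle gives no contradiction when the attachment structure is sparse.

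It is worth noting how this aligns with what the paper actually does: the authors use the $t$-Closure Lemma only \emph{in combination with} the degree-sequence hypothesis $P(t)$ (Theorem~\ref{t:notre}), which is precisely the extra assumption that guarantees enough high-degree vertices for the closure to act nontrivially. Your proposal, stripped of any degree hypothesis, is attempting the unconditional statement, and the obstruction you identify is the same one that confines the paper to conditional results. One small addition to your list of known constraints: any candidate constant must satisfy $t \geq \frac{9}{4}$ by the Bauer--Broersma--Veldman construction cited in the paper, and the best one could hope to extract from the machinery here is a statement conditioned on a hypothesis like $P(t)$, not Conjecture~\ref{c:chvatal} itself.
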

	
	This conjecture is still open. Bauer, Broersma and Veldman~\cite{bauer_2000} showed there are ($\frac{9}{4} - \epsilon$)-tough graphs that are not Hamiltonian for any $\epsilon > 0$. Conjecture~\ref{c:chvatal} was shown to be true for chordal graphs (\cite{chen_1998}, \cite{kabela_2017}). A survey on Chv\'atal's conjecture was given in \cite{bauer_2000}.

	We will investigate a more restricted version of Chv\'atal's conjecture. Our result is related to another well known theorem of Chv\'atal on degree sequences of Hamiltonian graphs.

	A non-decreasing sequence $d_1,d_2,\dots, d_n$ of nonnegative integers is called the degree sequence of a graph $G$ if the vertices of $G$ can be arranger into a sequence $v_1,v_2,\dots, v_n$ such that the degree of each $v_i$ is $d_i$. In 1972, Chv\'atal proved the following theorem. 
	
	\begin{theorem}[Chv\'atal \cite{chvatal_hamiltons_1972}]\label{t:chvatal}
		Let $G$ be a graph with degree sequence $d_1, d_2,\dots , d_n$. If  $d_i\leq i<\frac{n}{2}$ implies $d_{n-i}\geq n - i$ for all $i$, then $G$ is Hamiltonian.
	\end{theorem}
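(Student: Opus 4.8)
The plan is to deduce Theorem~\ref{t:chvatal} from the Bondy--Chv\'atal Closure Lemma quoted in the abstract, by showing that the degree hypothesis forces the closure $G^*$ to be complete; since $K_n$ is Hamiltonian for $n\geq 3$ and $G$ is Hamiltonian if and only if $G^*$ is, this suffices. The first step is a monotonicity observation: if $G\subseteq H$ on the same vertex set, then the sorted degree sequence of $H$ dominates that of $G$ entrywise (each added edge raises two degrees by $1$, which can only increase every order statistic), and consequently $H$ again satisfies Chv\'atal's condition whenever $G$ does. Indeed, if $d_i(H)\leq i<\tfrac n2$ then $d_i(G)\leq d_i(H)\leq i$, so $d_{n-i}(G)\geq n-i$, whence $d_{n-i}(H)\geq d_{n-i}(G)\geq n-i$. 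In particular $G^*$ inherits the hypothesis, so I may argue about $G^*$ directly, writing $d(x)$ for the degree of a vertex $x$ in $G^*$ and $d_1\leq\dots\leq d_n$ for its sorted degree sequence.

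Suppose for contradiction that $G^*$ is not complete. Among all non-adjacent pairs of $G^*$ I choose $u,v$ with $d(u)+d(v)$ maximum, and set $h=d(u)\leq d(v)$. Because $uv$ was not added during the closure process, $d(u)+d(v)\leq n-1$, so $2h\leq n-1$ and hence $h<\tfrac n2$. Every vertex $w\neq v$ non-adjacent to $v$ satisfies $d(w)+d(v)\leq d(u)+d(v)$ by the extremal choice, so $d(w)\leq h$; since $v$ has at least $n-1-d(v)\geq h$ non-neighbours other than itself, this yields at least $h$ vertices of degree at most $h$, giving $d_h\leq h$.

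Now I apply the hypothesis to $G^*$ at index $h$: from $d_h\leq h<\tfrac n2$ I obtain $d_{n-h}\geq n-h$, that is, $G^*$ has at least $h+1$ vertices of degree at least $n-h$. The vertex $v$ is not one of them, since $d(v)\leq n-1-h<n-h$. I then run the symmetric argument through $u$: any vertex $w\neq u$ non-adjacent to $u$ has $d(w)\leq d(v)\leq n-1-h<n-h$ by the same extremal choice, so no vertex of degree at least $n-h$ can be a non-neighbour of $u$, and none of them equals $u$ (as $d(u)=h<n-h$). Hence each of the $\geq h+1$ vertices of degree at least $n-h$ is a neighbour of $u$, forcing $d(u)\geq h+1$ and contradicting $d(u)=h$. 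This contradiction shows $G^*=K_n$, completing the proof.

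The conceptual core is the \emph{dual} use of the extremal pair $u,v$: maximality is exploited once through $v$ (to bound the small degrees and trigger the hypothesis at index $h$) and once through $u$ (to force all the high-degree vertices to be neighbours of $u$). I expect the main obstacle to be bookkeeping rather than ideas---chiefly verifying the monotonicity lemma and handling the off-by-one treatment of $u$ and $v$ in the counts ``at least $h$ non-neighbours of $v$'' and ``at least $h+1$ vertices of degree at least $n-h$.'' A self-contained alternative that sidesteps the Closure Lemma would instead take an edge-maximal non-Hamiltonian supergraph of $G$, extract the spanning path $u=v_1,\dots,v_n=v$ from a Hamiltonian cycle of $G+uv$, and apply the standard crossing argument (the index sets $\{i:u\sim v_{i+1}\}$ and $\{i:v\sim v_i\}$ are disjoint, so $d(u)+d(v)\leq n-1$); the identical counting argument then applies verbatim.
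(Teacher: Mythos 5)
Your proof is correct. The paper itself gives no proof of Theorem~\ref{t:chvatal} (it is quoted from Chv\'atal's 1972 paper), but your argument is exactly the classical Bondy--Chv\'atal derivation of it from the Closure Lemma (Lemma~\ref{l:closure-original}): the monotonicity step showing $G^*$ inherits the degree condition, and the extremal non-adjacent pair $u,v$ with $d(u)+d(v)$ maximum used twice---through $v$ to get $d_h\leq h$ and trigger the hypothesis, through $u$ to force all $h+1$ vertices of degree at least $n-h$ to be neighbours of $u$---both check out, so $G^*=K_n$ and the Closure Lemma finishes. This is precisely the route the paper alludes to when it remarks that the Closure Lemma ``can be used to prove several classical results in Hamiltonian graph theory.''
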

	
	 We say that a graph $G'$ with degree sequence $d'_{1}, d'_{2}, \ldots , d'_{n}$ {\it degree-majorizes} a graph $G$ with degree sequence  $d_1, d_2,\dots , d_n$ if $d'_i \geq d_{i}$ for all $i$. Theorem~\ref{t:chvatal} is best possible in the following sense. If a graph $G$ does not satisfy the hypothesis of Theorem \ref{t:chvatal}, then there is a graph $G'$ that degree-majorizes $G$ such that $G'$ is not Hamiltonian.
	
	Consider a graph $G$ with degree sequence $d_1,d_2,\dots d_n$, and define the predicate $P(t)$ as follows: 
	
	$$P(t): \, \mbox{ for all } i <\frac{n}{2},\mbox{ if } d_i\leq i \mbox{ then } d_{n-i+t}\geq n-i$$.
	
	In this paper, we investigate the following conjecture:

	\begin{conjecture}[Hoàng~\cite{hoang_Hamiltonian_1995}]\label{t:conj_generale}
		If $G$ is a $t$-tough graph satisfying $P(t)$ then $G$ is Hamiltonian. 
	\end{conjecture}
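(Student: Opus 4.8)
The plan is to transpose to the tough setting the classical derivation of Chv\'atal's Theorem~\ref{t:chvatal} from the Bondy--Chv\'atal closure, using a $t'$-closure in place of the ordinary closure and the Closure Lemma for tough graphs in place of the ordinary one. Two preliminary observations make this possible: adding an edge never decreases toughness, and raising degrees only makes the hypothesis ``$d_i\le i$'' of $P(t)$ harder to meet while making its conclusion ``$d_{n-i+t}\ge n-i$'' easier, so every closure of $G$ remains $t$-tough and still satisfies $P(t)$. The Closure Lemma for tough graphs, however, requires $\frac{3t'-1}{2}$-toughness to add edges of degree-sum threshold $n-t'$; since $G$ is only assumed $t$-tough, I would take $t'$ to be the largest integer with $\frac{3t'-1}{2}\le t$, namely $t'=\lfloor\frac{2t+1}{3}\rfloor$ (for $t\ge 3$ this gives $t'\ge 2$, so the lemma applies, and for $t=4$ it returns $t'=3$). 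By the Closure Lemma, $G$ is Hamiltonian if and only if its $t'$-closure $G^{t'*}$ is, so it suffices to prove $G^{t'*}$ Hamiltonian.

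Next I would extract structure from $P(t)$ by the Chv\'atal counting argument. If $G^{t'*}$ is not complete, pick non-adjacent vertices $u,v$ with $\deg u\le\deg v$ of maximum degree sum; $t'$-closedness gives $\deg u+\deg v\le n-t'-1$. Writing $i=\deg u<\frac{n}{2}$, maximality forces every non-neighbour of $v$ to have degree at most $i$, yielding at least $t'+i$ vertices of degree $\le i$ and hence $d_i\le i$; then $P(t)$ gives $d_{n-i+t}\ge n-i$. Counting the non-neighbours of $u$ in the same way gives at least $n-1-i$ vertices of degree at most $n-t'-1-i$. Setting these estimates against one another confines the vertices that fail to reach degree $n-i$ to a set of bounded size, so any non-complete $G^{t'*}$ is forced into the shape of the extremal non-Hamiltonian graphs of Chv\'atal's theorem---essentially a clique joined to an independent or sparse set---perturbed only on the $O(t)$ residual vertices.

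The final step is to eliminate this non-complete shape using $t$-toughness. Such graphs admit a cut-set $S$ with $c(G^{t'*}-S)>\frac{|S|}{t}$ and so are not $t$-tough; since $G^{t'*}$ is $t$-tough it must therefore be complete, hence Hamiltonian, and the Closure Lemma for tough graphs returns that $G$ is Hamiltonian. I expect the genuine difficulty to sit in this last step together with the slack left by the counting. Because the closure parameter $t'\approx\frac{2t}{3}$ is strictly smaller than the shift $t$ appearing in $P(t)$, the inequalities of the second step never tighten enough to force completeness outright; they leave a residue of roughly $t$ exceptional vertices whose adjacencies have to be analysed configuration by configuration. For small $t$ this residue is dispatched by a finite case analysis (this is how the cases $t\le 4$ are currently settled), but the number of extremal configurations grows with $t$, and producing one argument that certifies the toughness violation uniformly for all $t$ is the crux on which the conjecture still rests.
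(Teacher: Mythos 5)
The statement you set out to prove is a conjecture: it is open, and the paper does not prove it either. The paper establishes only the case $t=4$ (Theorem~\ref{t:notre}), the cases $t\le 3$ being Hoàng's earlier Theorem~\ref{t:Chinh}. Your proposal is likewise not a proof, and you concede as much in your last sentence (``the crux on which the conjecture still rests''). That concession is the gap, and it is exactly where you locate it: after passing to the $t'$-closure with $t'=\lfloor\frac{2t+1}{3}\rfloor<t$ (the largest parameter the $\frac{3t'-1}{2}$-toughness hypothesis of the closure lemma allows), the Chv\'atal counting in a non-complete closure yields $d_i\le i$, hence $d_{n-i+t}\ge n-i$ by $P(t)$, while the count of non-neighbours of $u$ gives only $d_{n-i}\le n-t'-1-i<n-i$. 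Since $n-i+t>n-i$, these two conclusions are perfectly consistent; the roughly $t$ positions between $n-i$ and $n-i+t$ are controlled by nothing, no contradiction follows, and the closure is not forced to be complete. Nothing in your second or third step closes this; the phrases ``confines the vertices \ldots to a set of bounded size'' and ``such graphs admit a cut-set $S$'' are assertions of exactly what would need to be proved.

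Beyond being incomplete, your final step also misdescribes how the settled case $t=4$ actually goes, in a way that matters. You propose to show that a non-complete closure would violate $t$-toughness, so that $G^{t'*}$ ``must therefore be complete, hence Hamiltonian.'' The paper's proof of Theorem~\ref{t:notre} does not force completeness of the $3$-closure and does not end in a contradiction. Instead it shows that a non-complete closed counterexample contains a universal clique $\Omega$ with $|\Omega|=\frac{n}{2}-4$ (claims (\ref{cp4:onFixeTout}) and (\ref{cp4:FxeOmega})), that every vertex outside $\Omega$ has degree at most $\frac{n}{2}-2$ and hence at most two neighbours outside $\Omega$, so that the components of $G-\Omega$ are paths, cycles or isolated vertices, and that $4$-toughness bounds their number by $\frac{|\Omega|}{4}\le\frac{n}{8}-1$; it then explicitly threads path covers of these components through distinct vertices of $\Omega$ to produce a Hamiltonian cycle. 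In other words, toughness is used constructively, to guarantee a large universal clique with few components hanging off it, not as a source of a completeness contradiction; the closure may well remain non-complete and the proof exhibits its Hamiltonian cycle directly. Since your proposal neither carries out such a structural analysis nor supplies any substitute for it, it proves neither the general conjecture nor, as written, the known cases.
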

	
	In 1993, Ho\`ang~\cite{hoang_Hamiltonian_1995} proved the following theorem using the well known Closure Lemma due to Bondy and Chv\'atal~\cite{bondy_method_1976}.
	
	\begin{theorem}[Hoàng~\cite{hoang_Hamiltonian_1995}]\label{t:Chinh}
		For $t\leq 3$,	if $G$ is a $t$-tough graph satisfying $P(t)$ then $G$ is Hamiltonian. 
	\end{theorem}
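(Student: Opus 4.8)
The plan is to reduce the statement to Chv\'atal's degree theorem (Theorem~\ref{t:chvatal}) by passing to the Bondy--Chv\'atal closure and then using $t$-toughness to make up the gap between $P(t)$ and Chv\'atal's stronger hypothesis. First I would check that both hypotheses survive the closure operation. Write $G^{*}$ for the ordinary closure (adding edges between non-adjacent pairs of degree sum at least $n$). Adding an edge cannot decrease toughness: for any $C$, every cut-set of $G+e$ is a cut-set of $G$ and $c((G+e)-C)\le c(G-C)$, so $t\,c((G+e)-C)\le t\,c(G-C)\le|C|$; hence $G^{*}$ is again $t$-tough. Likewise the sorted degree sequence increases pointwise under edge additions, so $d^{*}_i\le i$ forces $d_i\le i$, and then $P(t)$ for $G$ together with $d^{*}_{j}\ge d_{j}$ gives $P(t)$ for $G^{*}$. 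Since $G$ is Hamiltonian if and only if $G^{*}$ is, I may assume from now on that $G=G^{*}$ is closed.

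Assume for contradiction that $G$ is not Hamiltonian. If $G$ satisfied the hypothesis of Theorem~\ref{t:chvatal} it would be Hamiltonian, so there is an index $i<\frac n2$ with $d_i\le i$ and $d_{n-i}\le n-i-1$. Let $B=\{v:\deg v\ge n-i\}$. Any two vertices of $B$ have degree sum at least $2(n-i)>n$ (using $i<\frac n2$), so, $G$ being closed, $B$ is a clique. The inequality $d_{n-i}\le n-i-1$ gives $|B|\le i$, while applying $P(t)$ at this $i$ gives $d_{n-i+t}\ge n-i$, hence $|B|\ge i-t+1$. Thus $G$ carries a high-degree clique $B$ whose size is at least $i-t+1$; the toughness argument below must push this count up to contradict $|B|\le i$, and $P(t)$ alone leaves a shortfall of at most $t$ to be compensated.

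The remaining task is to turn this shortfall into a violation of $t$-toughness. The key tool is that for any independent set $I$ the set $N(I)$ is a cut-set isolating every vertex of $I$, so $t\,|I|\le t\,c(G-N(I))\le|N(I)|$. I would exhibit an independent set $Z$ of low-degree vertices whose neighbourhood lies inside $B$; note that if each vertex of $Z$ has all its neighbours in the clique $B$ then no two vertices of $Z$ can be adjacent, so such a $Z$ is automatically independent with $N(Z)\subseteq B$. For such a $Z$ one gets $t\,|Z|\le|N(Z)|\le|B|\le i$, i.e. $|Z|\le i/t$, while the failure of Chv\'atal's condition together with $|B|\ge i-t+1$ should force $|Z|>i/t$, the contradiction. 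The guiding extremal configuration is the join $K_i\vee\bigl(\overline{K_{i+1}}\cup K_{\,n-2i-1}\bigr)$: it is closed, satisfies $P(t)$ for every $t\ge1$, fails Chv\'atal's condition at $i$, and is non-Hamiltonian precisely because deleting the clique of size $i$ leaves $i+2$ components --- exactly the toughness violation we wish to reproduce.

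The main obstacle will be the construction of $Z$: a priori a low-degree vertex may have neighbours outside $B$, so one must show that the failure of Chv\'atal's condition in a closed graph forces enough vertices to have their entire neighbourhood inside $B$ (or inside a separator only slightly larger than $B$). This is where the hypothesis $t\le 3$ is essential: the deficit $i+1-|B|$ is at most $t\le 3$, so the admissible deviations from the extremal graph above can be enumerated and each eliminated by a short toughness computation, whereas for larger $t$ the number of configurations grows and the counting no longer closes --- this is exactly the content of Conjecture~\ref{t:conj_generale}. I would therefore organise the final step as a case analysis on $t\in\{1,2,3\}$ (equivalently on the deficit $i+1-|B|$), in each case either building explicitly the separator that contradicts $t$-toughness or, when the low-degree vertices are too spread out for the direct count, extracting a longer cycle to contradict the assumed non-Hamiltonicity.
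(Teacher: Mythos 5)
Your reduction to a closed graph is sound (toughness and $P(t)$ are both preserved under adding edges, by exactly the arguments you give), and so is the derivation of the clique $B$ with $i-t+1\leq |B|\leq i$ from the failure of Chv\'atal's condition at $i$ together with $P(t)$. This matches the opening moves of the strategy this paper uses for the $t=4$ case (Theorem~\ref{t:notre}), where the role of your $B$ is played by the sets $U^\alpha$ and the edge condition comes from the $t$-closure lemma instead of the ordinary closure. (The paper itself does not reprove Theorem~\ref{t:Chinh}; it cites Ho\`ang's paper. But its $t=4$ proof shows what the endgame of such an argument has to look like.)

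The problem is that everything after the definition of $B$ is aspiration rather than proof, and the aspiration is aimed at the wrong target. The independent set $Z$ with $N(Z)\subseteq B$ and $|Z|>i/t$ is never constructed, and the announced case analysis for $t\in\{1,2,3\}$ is not performed; you yourself flag this as ``the main obstacle.'' More seriously, the plan of ending with a pure toughness contradiction cannot be completed in general: a closed, $t$-tough graph satisfying $P(t)$ that violates Chv\'atal's condition at $i$ need not violate toughness at all. The typical structure one arrives at is a large universal clique $\Omega$ whose complement induces disjoint paths and cycles --- there the low-degree vertices have their two path-neighbours \emph{outside} $B$, no independent set with neighbourhood inside $B$ is large, and the graph is in fact Hamiltonian. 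This is precisely how the paper's proof of Theorem~\ref{t:notre} ends: after claims (\ref{cp4:tailleOmega})--(\ref{cp4:Fxeautre}) pin down $|\Omega|=\frac{n}{2}-4$ and show every vertex outside $\Omega$ has degree at most $\frac{n}{2}-2$ (hence at most two neighbours outside $\Omega$), the proof does not derive a contradiction but explicitly threads the path-components of $G-\Omega$ through distinct vertices of $\Omega$ to exhibit a Hamiltonian cycle. Your one-clause hedge about ``extracting a longer cycle to contradict the assumed non-Hamiltonicity'' is exactly this constructive step, and it is where all of the remaining work lies; without it the argument does not close. (A minor additional point: your guiding example $K_i\vee(\overline{K_{i+1}}\cup K_{n-2i-1})$ does not satisfy $P(1)$ --- it fails at the index $i+1$ --- so it is not extremal for every $t\geq 1$ as claimed.)
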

	
	If a graph satisfies $P(0)$ then it satisfies $P(1)$. Since a Hamiltonian graph must be 1-tough, Theorem \ref{t:Chinh} describes a class of Hamiltonian graphs that contains the class of graphs described by Theorem \ref{t:chvatal}. That is, Theorem \ref{t:Chinh} generalizes Theorem \ref{t:chvatal}.
	
	The {\it closure} of an $n$-vertex graph $G$ is the graph $G^*$ obtained from $G$ by the following operations: (i) first let $G^* = G$; (ii) find two non-adjacent vertices $x$, $y$ of $G^*$ such that $d_{G^*}(x) + d_{G^*}(y) \geq  n$, and add the edge $xy$ to $G^*$; (iii) repeat steps (ii) until no such vertices $x$,$y$ are present in $G^*$.

	\begin{lemma}[The Closure Lemma \cite{bondy_method_1976}]\label{l:closure-original}
		A graph $G$ is Hamiltonian if and only if its closure $G^*$ is.
	\end{lemma}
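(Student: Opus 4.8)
The plan is to prove the nontrivial implication and reduce the general statement to a single edge addition. The forward direction is immediate: since $G$ is a spanning subgraph of $G^*$, any Hamiltonian cycle of $G$ is also one of $G^*$. For the converse, I would exploit the fact that $G^*$ is built from $G$ by adding edges one at a time, each between a pair of non-adjacent vertices whose degree sum is at least $n$. Hence it suffices to prove the following key claim and then iterate it along the sequence of added edges: if $x,y$ are non-adjacent vertices of $G$ with $d_G(x)+d_G(y)\geq n$, then $G$ is Hamiltonian if and only if $G+xy$ is. As adding an edge cannot destroy an existing Hamiltonian cycle, only the implication ``$G+xy$ Hamiltonian $\Rightarrow$ $G$ Hamiltonian'' requires an argument.

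To establish this, suppose $G+xy$ has a Hamiltonian cycle $C$. If $C$ avoids the edge $xy$, then $C$ already lies in $G$ and we are done. Otherwise $C$ uses $xy$, so deleting this edge turns $C$ into a Hamiltonian path of $G$ with endpoints $x$ and $y$; I label its vertices $x=v_1,v_2,\dots,v_n=y$ in the order they appear along the path. I would then search for an index $i$ with $v_1v_{i+1}\in E(G)$ and $v_iv_n\in E(G)$. Given such an $i$, the cycle $v_1 v_{i+1} v_{i+2}\cdots v_n v_i v_{i-1}\cdots v_2 v_1$ uses only edges of $G$ (the two ``jump'' edges $v_1v_{i+1}$ and $v_iv_n$ together with segments of the path) and visits each vertex exactly once, so it is a Hamiltonian cycle of $G$.

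The crux is to guarantee that such an index exists, and this is where the degree hypothesis enters. I would set $A=\{\,i : v_1v_{i+1}\in E(G)\,\}$ and $B=\{\,i : v_iv_n\in E(G)\,\}$, both viewed as subsets of $\{1,\dots,n-1\}$. A short check shows $|A|=d_G(v_1)=d_G(x)$ and $|B|=d_G(v_n)=d_G(y)$, so $|A|+|B|=d_G(x)+d_G(y)\geq n$. Since $A$ and $B$ lie in a ground set of size $n-1$, the pigeonhole principle forces $A\cap B\neq\varnothing$, and any $i\in A\cap B$ yields the rerouting above. I expect this counting-and-rerouting step to be the main obstacle: one must use the non-adjacency of $x$ and $y$ to rule out the boundary indices (neither $v_n\in N(v_1)$ nor $v_1\in N(v_n)$), which is exactly what keeps $|A|+|B|$ strictly larger than the ground set and also ensures the two jump edges are distinct from the deleted edge $xy$, so that the new cycle genuinely lives in $G$. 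Once the key claim is proved, a straightforward induction on the number of edges added during the closure operation completes the proof.
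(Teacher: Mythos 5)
Your proof is correct and is precisely the classical Bondy--Chv\'atal argument: reduce to a single edge addition, convert the Hamiltonian cycle through $xy$ into a Hamiltonian $x$--$y$ path, and use the pigeonhole count $|A|+|B|\geq n > n-1$ to find a crossing pair $v_1v_{i+1}, v_iv_n \in E(G)$ that reroutes into a Hamiltonian cycle of $G$. The paper does not reprove this lemma but cites \cite{bondy_method_1976}, whose proof is exactly yours; the same crossing-edge idea is what the paper packages as claim~(\ref{c:arretes_qui_croise}) for its own $t$-closure arguments.
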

	
	The Closure Lemma can be used to prove several classical results in Hamitonian graph theory. It proved to be fruitful in proving many deep results on cycles, paths and other subgraphs. A survey on the closure concept was given in \cite{broersma_2000}. 
	
	In this paper, we prove the following : 
	
	\begin{theorem}\label{t:notre}  
		If $G$ is a $4$-tough graph satisfying $P(4)$, then $G$ is Hamiltonian. 
	\end{theorem}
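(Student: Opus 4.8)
The plan is to reduce Theorem~\ref{t:notre} to the already-known case $t=3$ (Theorem~\ref{t:Chinh}) by means of the Closure Lemma for tough graphs, the main closure result of this paper. The decisive coincidence is that $4$-toughness is \emph{exactly} the toughness required to apply that lemma with parameter $t=3$, since $\frac{3\cdot 3-1}{2}=4$; this is why the theorem is stated for $4$-tough graphs and why we cannot instead use the $4$-closure (which would demand $\tfrac{11}{2}$-toughness). Concretely, I would first observe that forming a closure only adds edges, and adding edges cannot increase the number of components of $G-S$ for any $S$; hence the $3$-closure $G^{3*}$ is again $4$-tough, in particular $3$-tough. Because $G$ is $\frac{3\cdot 3-1}{2}$-tough, the Closure Lemma for tough graphs gives that $G$ is Hamiltonian if and only if $G^{3*}$ is, so it suffices to prove that $G^{3*}$ is Hamiltonian.

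By Theorem~\ref{t:Chinh} applied with $t=3$, it is enough to show that the $3$-tough graph $G^{3*}$ satisfies $P(3)$. I would first record that $G^{3*}$ already satisfies $P(4)$: the predicate $P(t)$ is preserved under degree-majorization, for if $G'$ degree-majorizes $G$ and $d'_i\le i$, then $d_i\le d'_i\le i$, so $P(t)$ for $G$ yields $d_{n-i+t}\ge n-i$ and hence $d'_{n-i+t}\ge d_{n-i+t}\ge n-i$. Since the closure only raises degrees, its sorted degree sequence majorizes that of $G$, and $G^{3*}$ inherits $P(4)$. The whole difficulty is therefore concentrated in upgrading $P(4)$ to $P(3)$, that is, in improving the index $n-i+4$ down to $n-i+3$.

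The key structural input is the defining property of the $3$-closure: any two non-adjacent vertices of $G^{3*}$ have degree sum at most $n-4$. Suppose, for contradiction, that $P(3)$ fails at some $i<\frac n2$ while $P(4)$ holds; writing the degrees of $G^{3*}$ as $d_1\le\cdots\le d_n$, this means $d_i\le i$, $d_{n-i+3}\le n-i-1$, and (by $P(4)$) $d_{n-i+4}\ge n-i$. Thus the $i-3$ vertices of largest degree form a set $B$ with every degree at least $n-i$, while at least $n-i+3$ vertices have degree at most $n-i-1$. The closure bound pins down the behaviour of $B$: a vertex of $B$ is non-adjacent only to vertices of degree at most $i-4$, so $B$ is a clique and is moreover complete to every vertex of degree at least $i-3$. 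I would then combine this near-universality of $B$ with the abundance of low-degree vertices to exhibit a cut-set $C$ with $c(G^{3*}-C)>\frac{|C|}{4}$, contradicting $4$-toughness. I expect this cut-set construction --- extracting from the forbidden degree ``jump'' a separator too small for the number of components it creates --- to be the main obstacle, and the one place where the full strength of $4$-toughness (rather than merely the toughness needed to invoke the closure) is consumed; a purely counting argument on the edges between $B$ and the low-degree vertices only bounds their number and does not by itself close the gap.

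Once $P(3)$ is established for $G^{3*}$, the argument concludes: $G^{3*}$ is $3$-tough and satisfies $P(3)$, so Theorem~\ref{t:Chinh} makes it Hamiltonian, whence the Closure Lemma for tough graphs makes $G$ Hamiltonian. Along the way I would separately dispose of the small-index cases $i\le 3$ (where $B$ is empty and $n-i+3\ge n$, so the relevant conclusion of $P(3)$ is vacuous or reduces to a single degree comparison) and verify the standing hypothesis $n\ge 3$, so that these boundary situations are not silently assumed away in the main count.
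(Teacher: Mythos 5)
Your opening move coincides exactly with the paper's: since $\frac{3\cdot 3-1}{2}=4$, the $t$-Closure Lemma (Lemma~\ref{l:closure+}, via Lemma~\ref{l:general} with $t=3$) lets one replace $G$ by its $3$-closure, which is still $4$-tough and, since closing only raises degrees, still satisfies $P(4)$; the paper encodes this by assuming that $d_i+d_j\ge n-3$ forces $x_ix_j\in E$. After that you diverge, and the divergence is where the gap lies. You propose to prove that the $3$-closed graph satisfies $P(3)$ and then quote Theorem~\ref{t:Chinh}, but the entire content of that step --- ``exhibit a cut-set $C$ with $c(G^{3*}-C)>\frac{|C|}{4}$ whenever $P(3)$ fails'' --- is left as an expectation, and it is precisely the hard part. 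Moreover, it cannot be done from the ingredients you list ($3$-closedness, $P(4)$, near-universality of $B$, abundance of low-degree vertices), because those ingredients are consistent with a failure of $P(3)$. Take $n$ even, a universal clique $\Omega$ of size $\frac{n}{2}-4$, and let the remaining $\frac{n}{2}+4$ vertices induce a cycle, so each has degree exactly $\frac{n}{2}-2$. This graph is $3$-closed (non-adjacent vertices have degree sum $n-4$), satisfies $P(4)$, and yet fails $P(3)$ at $i=\frac{n}{2}-1$: there $d_i=\frac{n}{2}-2\le i$ while $d_{n-i+3}=d_{\frac{n}{2}+4}\le \frac{n}{2}-2<n-i$. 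Here $B=\Omega$ has size exactly $i-3$ and is fully universal, so no contradiction can come from $B$'s structure or from counting alone; this is in fact exactly the extremal configuration the paper's analysis converges to. To kill it you must deploy $4$-toughness against a carefully chosen cut-set (for instance $\Omega$ together with alternate vertices of the outside cycle, giving about $\frac{n}{4}$ components against a cut-set of size about $\frac{3n}{4}$), and before you can even describe that cut-set you must derive the configuration's structure: that the critical index is $k=\frac{n}{2}-2$, that $U^{\frac{n}{2}-1}$ is a universal clique of size exactly $\frac{n}{2}-4$, and that every vertex outside it has degree at most $\frac{n}{2}-2$, hence that the outside induces a graph of maximum degree $2$. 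That derivation is the bulk of the paper's proof.

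The paper's actual route also shows your reduction target is stronger than necessary: rather than refuting the extremal configuration, the paper accepts it and finishes constructively --- $4$-toughness applied to the cut-set $\Omega$ gives $c(G-\Omega)\le\frac{|\Omega|}{4}\le\frac{n}{8}-1$, the few components outside $\Omega$ are paths or cycles, and they are threaded through distinct vertices of $\Omega$ to form an explicit Hamiltonian cycle. (In the case where $d_i>i$ for all $i<\frac{n}{2}$, the paper quotes Chv\'atal's Theorem~\ref{t:chvatal}, not Theorem~\ref{t:Chinh}.) So, as written, your plan reduces Theorem~\ref{t:notre} to an unproven statement --- that after closure, $4$-toughness upgrades $P(4)$ to $P(3)$ --- which is at least as hard as the theorem itself and requires essentially all of the structural analysis you hoped to bypass, plus a refutation step the paper never needs.
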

	
	To do so, we prove a version of the Closure Lemma for tough graphs. 
	
	The {\it $t$-closure} of a $n$-vertex graph $G$ is the graph $G^{*t}$ obtained from $G$ by the following operations: (i) first let $G^{*t} = G$; (ii) find two non-adjacent vertices $x$, $y$ of $G^*$ such that $d_{G^{*t}}(x) + d_{G^{*t}}(y) \geq  n-t$, and add the edge $xy$ to $G^{*t}$; (iii) repeat steps (ii) until no such vertices $x$,$y$ are present in $G^{*t}$. We will establish the following result.

	\begin{lemma}[$t$-Closure Lemma]\label{l:t-closure}
		
		For $t\geq 2$, a $\frac{3t-1}{2}$-tough graph $G$ is Hamiltonian if and only if its $t$-closure $G^{*t}$ is.
	\end{lemma}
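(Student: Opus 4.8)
The plan is to follow the structure of the Bondy--Chv\'atal proof of the original Closure Lemma (Lemma~\ref{l:closure-original}), but to carry along the toughness hypothesis so that it can be invoked at the critical step. One direction is trivial: since $G$ is a spanning subgraph of $G^{*t}$, any Hamiltonian cycle of $G$ is also a Hamiltonian cycle of $G^{*t}$, so if $G$ is Hamiltonian then so is $G^{*t}$. The substance lies in the converse, and since $G^{*t}$ is obtained by a sequence of single-edge additions, it suffices by induction to prove the following one-step claim: if $x,y$ are non-adjacent vertices of $G$ with $d_G(x)+d_G(y)\geq n-t$, and if $G+xy$ is Hamiltonian, then $G$ itself is Hamiltonian. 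One must also check that toughness is not destroyed along the way (adding edges can only increase toughness, and $\frac{3t-1}{2}$-toughness of $G$ is inherited by every intermediate graph), so the hypothesis remains available at each step.

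For the one-step claim I would argue by contradiction. Suppose $G+xy$ is Hamiltonian but $G$ is not. Take a Hamiltonian cycle $C$ of $G+xy$; it must use the edge $xy$, for otherwise $C$ lies in $G$. Deleting $xy$ from $C$ yields a Hamiltonian path $P=v_1v_2\cdots v_n$ of $G$ with endpoints $v_1=x$ and $v_n=y$. Now I would run the classical rotation/crossing argument of Ollmann and of Bondy--Chv\'atal: define the index sets $A=\{\,i : xv_{i+1}\in E(G)\,\}$ and $B=\{\,i : yv_i\in E(G)\,\}$, both subsets of $\{1,\dots,n-1\}$. Each index $i\in A\cap B$ produces a new Hamiltonian path (swap the edges to get the cycle $x v_{i+1}\cdots v_n v_i \cdots v_1 x$ using $xv_{i+1}$ and $yv_i$), and if such an $i$ gives back a genuine Hamiltonian cycle of $G$ we are done; so $A\cap B=\emptyset$. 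In the original proof $|A|+|B|=d_G(x)+d_G(y)\geq n$ while $A,B\subseteq\{1,\dots,n-1\}$ forces $A\cap B\neq\emptyset$, a contradiction. Here, however, the weaker hypothesis $d_G(x)+d_G(y)\geq n-t$ only guarantees $|A\cup B|\geq n-t-1$, i.e.\ at most $t$ indices of $\{1,\dots,n-1\}$ are missed, so the direct counting collapses.

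This gap is exactly where the toughness hypothesis must enter, and it is the main obstacle. The idea is that the (at most $t$) "missing" indices, together with the endpoints $x$ and $y$, identify a small set $S$ of vertices of $G$ whose removal should disconnect the graph or otherwise create too many components relative to $|S|$, contradicting $\frac{3t-1}{2}$-toughness. Concretely, I would collect the vertices $v_i$ indexed by $(\{1,\dots,n-1\}\setminus(A\cup B))$ and analyse the Hamiltonian path $P$ as a linear arrangement in which consecutive "good" segments are bounded by these special positions; the non-adjacencies forced by $A\cap B=\emptyset$ together with the absence of the rotation edges produce independent fragments that, after removing a cut-set of size roughly $2t$ built from the bad indices and the endpoints, yield more than $\frac{|S|}{(3t-1)/2}$ components. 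The delicate bookkeeping is to show that the number of components produced exceeds $\tfrac{2}{3t-1}|S|$, which forces the inequality $c(G-S)>\frac{|S|}{(3t-1)/2}$ and hence contradicts toughness; getting the constants to line up is precisely why the toughness threshold is $\frac{3t-1}{2}$ and why the hypothesis $t\geq 2$ is needed. I would set up the counting carefully, treating the endpoints $x,y$ and each bad index as contributing at most a bounded number of vertices to $S$ and at least a compensating number of components to $G-S$, and then conclude.
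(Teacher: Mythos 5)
There is a genuine gap. Your setup (trivial direction, reduction to a single edge addition, preservation of toughness under edge addition, the Hamiltonian path $P=v_1\cdots v_n$ with $v_1=x$, $v_n=y$, and the crossing observation $A\cap B=\emptyset$, which is the paper's claim~(\ref{c:arretes_qui_croise})) coincides with how the paper begins, but everything after that point is a plan rather than a proof, and the plan as sketched does not match what actually makes the constants work. The set you propose to cut --- ``roughly $2t$ vertices built from the bad indices and the endpoints'' --- cannot by itself disconnect anything: to show that the fragments of $P$ between special positions have no edges to one another you need the further forbidden-configuration claims (\ref{c:arretes_biais2}), (\ref{c:triples}) and (\ref{c:Sdanssegment}), which concern an arbitrary chord $v_iv_j$ of $P$ combined with crossing edges to $x$ and $y$; your argument never invokes anything beyond $A\cap B=\emptyset$, and with that alone no toughness violation can be derived.

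Moreover, the actual proof is not a single cut-set computation. The natural ``bad'' set $S=\{v_i: v_ix,v_iy\notin E\}$ is not small a priori (the degree-sum hypothesis only gives $|S|=|N(x)\cap N(y)|+t-2$, and $|N(x)\cap N(y)|$ can be large); bounding $|S|\leq 2t-2$ is itself a major step in the paper (the claim that $S^*=\emptyset$), and it uses toughness applied to a \emph{stable} subset $S^*\subseteq S$ whose removal of $N(S^*)$ leaves at least $|S^*|+1$ components --- a many-component application, not a two-component one. Only afterwards does the paper build a partition $V=X\cup Y\cup A$ with no edges between $X$ and $Y$, prove $|A|\leq |S|+t$ by a separate counting argument, and obtain the contradiction $2t-1\leq |S|\leq 2t-2$ from the two-component cut $A$. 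Finally, the case $t=2$ does not follow from this general argument at all: the paper needs a hierarchy of separate lemmas (Lemma~\ref{l:t=1} for degree sum $n-1$ under $2$-toughness, Lemma~\ref{l:2+espy} for degree sum $n-2$ under $(2+\epsilon)$-toughness, and Lemma~\ref{l:general} only for $t\geq 3$), and your induction on edge additions would in any case have to branch on the actual degree sum in the current graph and invoke the appropriate member of this hierarchy. In short, you have correctly located where toughness must enter, but the argument that fills that hole --- the structural claims, the set definitions, and the three distinct applications of toughness --- is the entire content of the paper's proof and is absent from yours.
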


	We did not investigate the tightness of the bound  $\frac{3t-1}{2}$. We think that it is the best possible for small $t$. But finding such counter examples seems to be very difficult.
	
Before establishing the results, we need to introduce a few notions.
	
	When $P$ is a path with vertices $x = v_1, \dots, v_k = y$ and edges  $v_i v_{i+1}$ for $i = 1, 2, \dots, k-1$, for $i < j$ we denote by $P(v_i,v_j)$, the subpath $v_i, v_{i+1}, \dots , v_j$. We will use the same definition when $P$ is a cycle. Observe that, in the context of Hamiltonian paths and cycles, all vertices of the graphs are included in $P$ but some edges may be not included in $P$. Hence, for sake of simplicity, we denote by $v_iv_j\in P$ when $v_iv_j$ is an edge included in $P$. 
	
	When $A$ and $B$ are sets of vertices, we say that $A$ is {\it complete} to $B$ when for any vertex $a\in A$ and any vertex $b\in B$ with $a\neq b$, $a$ and $b$ are adjacent. Observe that this definition holds event when $A\cap B\neq \emptyset$. For the sake of simplicity, we say that a vertex $x$ is complete to $A$ when $\{x\}$ is complete to $A$. 
	
	For a vertex $x$, $N(x)$ denote the set of neigbours of $x$. When $A$ is a subset of vertices, $N_A(x)=N(x)\cap A$ and $N(A)=\cup_{x\in A}N_{V(G)\sm A}(x)$.  
	
	For terms and notations not defined here, we rely on \cite{bondy_graph_2008}.

	In section~\ref{s:t-closure}, we prove the $t$-closure lemma.
	
	In section~\ref{s:Hamiltonian_and_tough}, we prove Theorem~\ref{t:notre}.

\section{The $t$-Closure Lemma}\label{s:t-closure}

	To prove the $t$-Closure Lemma, we divide the proof into two cases, $t=2$ (Section \ref{ss:2tough}) and $t\geq 3$ (Section \ref{ss:t>2}). In addition, we prove a strongest result when $t=2+\epsilon$ for $\epsilon\geq \frac{1}{4}$ (Section \ref{ss:2+e}). To prove those three cases we need preliminary results given in \ref{ss:prem_res}. 
	
	\subsection{Preliminary results}\label{ss:prem_res}
	Let $G$ be a non-Hamitonian graph (i.e., $G$ is not Hamiltonian). Let $P$ be a Hamiltonian path in $G$ with vertices $x = v_1, \dots, v_n = y$ and edges  $v_i v_{i+1}$ for $i = 1, 2, \dots, n-1$. The proofs of the results below are  illustrated in Figure~\ref{f:closure_arretes_interdites}.

	\begin{figure}
		\centering
		\begin{tikzpicture}[scale=1]
			
			\begin{scope}[xshift=0.5cm,yshift=-2cm]
				\node[-] (x) at (0,0.1) {$x$};
				\node[-] (a) at (-0.75,-0.25) {$v_2$};
				\node[-] (b) at (0.1,-2.75) {$v_{i-1}$};
				\node[-] (c) at (1.2,-2.75) {$v_i$};
				\node[-] (d) at (2,-0.25) {$v_{n-1}$};
				\node[-] (y) at (1,0.1) {$y$};
				
				\node[-] (name) at (0.7,-3.5) {(\ref{c:arretes_qui_croise})};
				
				\draw[color=red] (x) -- (a);
				\draw[dashed,color=red] (b) to[bend left=40] (a);
				\draw[-] (b) -- (c);
				\draw[dashed,color=red] (c) to[bend right=40] (d);
				\draw[color=red] (d) -- (y);
				
				\draw[color=red] (x) -- (c);
				\draw[color=red] (y) -- (b);
				
			\end{scope}

			\begin{scope}[xshift=9cm,yshift=0cm]
				\node[-] (x) at (0,0.1) {$x$};
				\node[-] (a) at (-0.75,-0.25) {$v_2$};
				\node[-] (b) at (-1,-1.9) {$v_{i}$};
				\node[-] (c) at (-0.5,-2.6) {$v_{i+1}$};
				\node[-] (d) at (1.75,-2.6) {$v_{j-1}$};
				\node[-] (e) at (2.25,-1.9) {$v_{j}$};
				\node[-] (f) at (2,-0.25) {$v_{n-1}$};
				\node[-] (y) at (1,0.1) {$y$};

				\draw[color=red] (x) -- (a);
				\draw[dashed,color=red] (b) to[bend left=20] (a);
				\draw[-] (b) -- (c);
				\draw[dashed,color=red] (c) to[bend right=15] (d);
				\draw[-] (d)-- (e);
				\draw[dashed,color=red] (e) to[bend right=20] (f);
				\draw[color=red] (y) -- (f);
				
				\draw[color=red] (x) -- (d);
				\draw[color=red] (y) -- (c);
				\draw[color=red] (b) -- (e);
				
			\end{scope}

			\begin{scope}[xshift=4.5cm,yshift=0cm]
				\node[-] (x) at (0,0.1) {$x$};
				\node[-] (a) at (-0.75,-0.25) {$v_2$};
				\node[-] (b) at (-1,-1.9) {$v_{i}$};
				\node[-] (c) at (-0.5,-2.6) {$v_{i+1}$};
				\node[-] (d) at (1.75,-2.6) {$v_{j-1}$};
				\node[-] (e) at (2.25,-1.9) {$v_{j}$};
				\node[-] (f) at (2,-0.25) {$v_{n-1}$};
				\node[-] (y) at (1,0.1) {$y$};
				
				\node[-] (name) at (2.75,-3.5) {(\ref{c:arretes_biais2})};
				
				\draw[color=red] (x) -- (a);
				\draw[dashed,color=red] (b) to[bend left=20] (a);
				\draw[-] (b) -- (c);
				\draw[dashed,color=red] (c) to[bend right=15] (d);
				\draw[-] (d)-- (e);
				\draw[dashed,color=red] (e) to[bend right=20] (f);
				\draw[color=red] (y) -- (f);
				
				\draw[color=red] (x) -- (c);
				\draw[color=red] (y) -- (d);
				\draw[color=red] (b) -- (e);
				
			\end{scope}

			\begin{scope}[xshift=4.5cm,yshift=-4cm]
				\node[-] (x) at (0,0.1) {$x$};
				\node[-] (a) at (-0.75,-0.25) {$v_2$};
				\node[-] (b) at (-1,-1.9) {$v_{i}$};
				\node[-] (c) at (-0.5,-2.6) {$v_{i+1}$};
				\node[-] (d) at (1.75,-2.6) {$v_{j}$};
				\node[-] (e) at (2.25,-1.9) {$v_{j+1}$};
				\node[-] (f) at (2,-0.25) {$v_{n-1}$};
				\node[-] (y) at (1,0.1) {$y$};
				
				\node[-] (name) at (2.75,-3.5) {(\ref{c:triples})};
				
				\draw[color=red] (x) -- (a);
				\draw[dashed,color=red] (b) to[bend left=20] (a);
				\draw[-] (b) -- (c);
				\draw[dashed,color=red] (c) to[bend right=15] (d);
				\draw[-] (d)-- (e);
				\draw[dashed,color=red] (e) to[bend right=20] (f);
				\draw[color=red] (y) -- (f);
				
				\draw[color=red] (x) -- (e);
				\draw[color=red] (y) -- (c);
				\draw[color=red] (b) -- (d);
				
			\end{scope}

			\begin{scope}[xshift=9cm,yshift=-4cm]
				\node[-] (x) at (0,0.1) {$x$};
				\node[-] (a) at (-0.75,-0.25) {$v_2$};
				\node[-] (b) at (-1,-1.9) {$v_{i-1}$};
				\node[-] (c) at (-0.5,-2.6) {$v_i$};
				\node[-] (d) at (1.75,-2.6) {$v_{j-1}$};
				\node[-] (e) at (2.25,-1.9) {$v_{j}$};
				\node[-] (f) at (2,-0.25) {$v_{n-1}$};
				\node[-] (y) at (1,0.1) {$y$};

				\draw[color=red] (x) -- (a);
				\draw[dashed,color=red] (b) to[bend left=20] (a);
				\draw[-] (b) -- (c);
				\draw[dashed,color=red] (c) to[bend right=15] (d);
				\draw[-] (d)-- (e);
				\draw[dashed,color=red] (e) to[bend right=20] (f);
				\draw[color=red] (y) -- (f);
				
				\draw[color=red] (x) -- (d);
				\draw[color=red] (y) -- (b);
				\draw[color=red] (e) -- (c);
				
			\end{scope}
		\end{tikzpicture}
		\caption{Illustrations of (\ref{c:arretes_qui_croise}),(\ref{c:arretes_biais2}) and (\ref{c:triples})}
	\end{figure}\label{f:closure_arretes_interdites}

	\vspace{2ex}
	\begin{claim}\label{c:arretes_qui_croise}
		For all $i$ with $2\leq i \leq n$, if $v_ix\in E$ then $v_{i-1}y\notin E$.  
	\end{claim}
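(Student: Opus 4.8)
The plan is to argue by contradiction, turning the two forbidden edges into a Hamiltonian cycle and thereby contradicting the hypothesis that $G$ is non-Hamiltonian. So suppose, for some $i$ with $2 \leq i \leq n$, that both $v_i x \in E$ and $v_{i-1} y \in E$.

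The key step is a rerouting of the path $P$ that swaps the two ``crossing'' chords $x v_i$ and $y v_{i-1}$ in for two edges of $P$. Starting at $x = v_1$, I would first use the chord $x v_i$ to jump to $v_i$, follow the forward subpath $P(v_i, v_n)$ up to $y = v_n$, then use the chord $y v_{i-1}$ to jump back to $v_{i-1}$, traverse the backward subpath $P(v_2, v_{i-1})$ down to $v_2$, and finally close the tour with the path edge $v_2 v_1 = v_2 x$. Concretely, this produces the closed walk
$$x,\, v_i,\, v_{i+1},\, \dots,\, v_n,\, v_{i-1},\, v_{i-2},\, \dots,\, v_2,\, x.$$

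It then remains to verify two things. First, every vertex is visited exactly once: the block $v_i, \dots, v_n$, the block $v_{i-1}, \dots, v_2$, and the single vertex $v_1 = x$ together partition $\{v_1, \dots, v_n\}$, so the walk spans $V(G)$. Second, every edge used lies in $E$: the two chords $x v_i$ and $y v_{i-1}$ are present by assumption, while all remaining consecutive pairs are edges of $P$ (traversed forward in the first block, backward in the second, and closed by $v_2 v_1 \in P$). Hence the walk is a Hamiltonian cycle, contradicting the choice of $G$; therefore $v_{i-1} y \notin E$, which is the assertion. This is precisely the rotation drawn as the two red crossing chords in the panel labelled $(\ref{c:arretes_qui_croise})$ of Figure~\ref{f:closure_arretes_interdites}.

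I expect no genuine obstacle in this argument; the only care needed is with the extreme indices, where one of the subpaths degenerates. When $i = 2$ we have $v_{i-1} = v_1 = x$, so the assumed edge $v_{i-1} y$ is $xy$ itself; when $i = n$ we have $v_i = v_n = y$, so the assumed edge $v_i x$ is again $xy$. In either boundary case one of the two hypothesised edges is $xy$, and then $P$ together with the edge $xy$ is already a Hamiltonian cycle, so the conclusion holds there as well. This is the same elementary rotation underlying the original Closure Lemma (Lemma~\ref{l:closure-original}).
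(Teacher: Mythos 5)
Your proof is correct and is essentially the paper's own argument: the cycle $x, v_i, v_{i+1}, \dots, v_n, v_{i-1}, \dots, v_2, x$ you construct is exactly the paper's cycle $x, v_2, \ldots, v_{i-1}, y, v_{n-1}, \ldots, v_i, x$ traversed in the opposite direction. Your extra care with the degenerate indices $i=2$ and $i=n$ (where one of the assumed edges is $xy$ itself, so $P$ plus $xy$ is already a Hamiltonian cycle) is a harmless refinement the paper leaves implicit.
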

	
	\begin{proofclaim}
		Suppose $v_ix\in E$ and $v_{i-1}y\in E$.  Now, $x, v_2, \ldots, v_{i-1} , y, v_{n-1}, \ldots v_i, x$ is a Hamiltonian cycle in $G$, a contradiction.
	\end{proofclaim} 
	
	\begin{claim}\label{c:arretes_biais2}
		For all edges $v_iv_j\in E$ with $1\leq i<i+1<j \leq n$, 
		
		if $v_{i+1}x\in E$ then $v_{j-1}y\notin E$ and 
		
		if $v_{i+1}y\in E$ then $v_{j-1}x\notin E$.
	\end{claim}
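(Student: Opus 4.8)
The plan is to mirror the contradiction argument used for~(\ref{c:arretes_qui_croise}). Since $G$ is assumed to be non-Hamiltonian, for each of the two implications it suffices to show that assuming the \emph{negation} of its conclusion --- i.e.\ that both named chords are present --- forces a Hamiltonian cycle in $G$, which is absurd. Thus for the first implication I would suppose that $v_iv_j$, $v_{i+1}x$ and $v_{j-1}y$ are all edges, and for the second that $v_iv_j$, $v_{i+1}y$ and $v_{j-1}x$ are all edges, and in each case exhibit one explicit Hamiltonian cycle obtained by rerouting the Hamiltonian path $P$ through the three available chords.

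For the first implication I would cut $P$ into the three consecutive arcs $P(v_1,v_i)$, $P(v_{i+1},v_{j-1})$ and $P(v_j,v_n)$ (these are well defined since $1\le i<i+1<j\le n$), and reassemble them using $xv_{i+1}$, $yv_{j-1}$ and $v_iv_j$. The candidate cycle is
$$x, v_{i+1}, v_{i+2}, \ldots, v_{j-1}, y, v_{n-1}, \ldots, v_j, v_i, v_{i-1}, \ldots, v_2, x,$$
which leaves $x$ by $xv_{i+1}$, runs the middle arc forward to $v_{j-1}$, crosses to $y$ by $yv_{j-1}$, runs the last arc backward to $v_j$, crosses to $v_i$ by $v_iv_j$, and runs the first arc backward to $x$. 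I would then check that consecutive vertices are joined by an edge (path edges together with the three chords) and that each vertex appears exactly once, yielding the contradiction.

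The second implication is handled by the analogous rerouting
$$x, v_{j-1}, v_{j-2}, \ldots, v_{i+1}, y, v_{n-1}, \ldots, v_j, v_i, v_{i-1}, \ldots, v_2, x,$$
which now uses $xv_{j-1}$ to leave $x$, traverses the middle arc backward, uses $v_{i+1}y$ to reach $y$, and then continues as before. It is worth noting that the two implications are not interchangeable by reversing $P$: each is individually invariant under the reversal $v_k\mapsto v_{n+1-k}$, so this obvious symmetry does not reduce one to the other and both cycles genuinely have to be produced.

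The only real care needed --- and the step most likely to hide a slip --- is the degenerate indices. When $j=i+2$ the middle arc is the single vertex $v_{i+1}$; when $i=1$ or $j=n$ one of the chords (for instance $v_{i+1}x=v_2v_1$, or $v_{j-1}y=v_{n-1}v_n$) is in fact a path edge and the corresponding arc $P(v_1,v_i)$ or $P(v_j,v_n)$ collapses to a single vertex. I would therefore read each subpath in the displayed cycles as possibly consisting of one vertex, and verify that the construction still yields a valid Hamiltonian cycle in these boundary cases. Everything else is routine.
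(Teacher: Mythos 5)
Your proof is correct and takes essentially the same approach as the paper: the two Hamiltonian cycles you exhibit are exactly the paper's cycles (for each implication, reroute $P$ through the chord $v_iv_j$ and the two hypothesized chords at $x$ and $y$), merely traversed in the opposite direction. Your additional attention to the degenerate indices ($j=i+2$, $i=1$, $j=n$) is sound; the paper passes over these boundary cases silently since the cycles remain valid when an arc collapses to a single vertex.
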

	
	\begin{proofclaim} 
		Suppose that $v_iv_j\in E$.
		
		If $xv_{i+1}\in E$ and $v_{j-1}y\in E$ then $x, v_2, \ldots, v_i$, $v_j, v_{j+1} , \ldots, y, v_{j-1}$, $v_{j-2}, \ldots, v_{i+1} , x $ is a Hamiltonian cycle in $G$, a contradiction. 
		
		If $yv_{i+1}\in E$ and $v_{j-1}x\in E$ then $x, v_2, \dots, v_i, v_j$,$v_{j+1}, \ldots, y$, $v_{i+1}, v_{i+2}, \ldots, v_{j-1}, x$ is a Hamiltonian cycle in $G$, a contradiction.
	\end{proofclaim}
	
	\begin{claim}\label{c:triples} 	
		For all edges $v_iv_j\in E$ with $1\leq i<j \leq n$, 
		
		if $v_{i+1}y\in E$ then $v_{j+1}x\notin E$ and
		
		if $v_{i-1}y\in E$ then $v_{j-1}x\notin E$.
	\end{claim}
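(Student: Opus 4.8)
The plan is to prove Claim~\ref{c:triples} in exactly the same spirit as Claims~\ref{c:arretes_qui_croise} and~\ref{c:arretes_biais2}: assume for contradiction that both edges named in one of the two implications are present, and then exhibit an explicit Hamiltonian cycle of $G$, contradicting that $G$ is non-Hamiltonian. The two implications are mirror images of one another, so it suffices to treat each by choosing the right decomposition of the Hamiltonian path $P$ into subpaths and rerouting them through the three chords that are now available ($v_iv_j$ together with the two chords incident to $x$ and $y$).

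For the first implication, suppose $v_iv_j\in E$, $v_{i+1}y\in E$ and, for contradiction, $v_{j+1}x\in E$ (so in particular $j\le n-1$, since otherwise $v_{j+1}$ is undefined and there is nothing to prove). I would cut $P$ at the chord endpoints into the three subpaths $x\cdots v_i$, $v_{i+1}\cdots v_j$ and $v_{j+1}\cdots y$, and then reconnect them using $xv_{j+1}$, $yv_{i+1}$ and $v_jv_i$. Concretely, I claim
$$x, v_{j+1}, v_{j+2}, \ldots, v_n=y, v_{i+1}, v_{i+2}, \ldots, v_j, v_i, v_{i-1}, \ldots, v_2, x$$
is a Hamiltonian cycle: it traverses $xv_{j+1}$, the forward subpath to $y$, the chord $yv_{i+1}$, the forward subpath to $v_j$, the chord $v_jv_i$, and finally the backward subpath from $v_i$ to $x$.

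For the second implication, suppose $v_iv_j\in E$, $v_{i-1}y\in E$ and, for contradiction, $v_{j-1}x\in E$ (so $i\ge 2$). Now I would cut $P$ into $x\cdots v_{i-1}$, $v_i\cdots v_{j-1}$ and $v_j\cdots y$ and reroute through $xv_{j-1}$, $v_iv_j$ and $yv_{i-1}$, obtaining the Hamiltonian cycle
$$x, v_{j-1}, v_{j-2}, \ldots, v_i, v_j, v_{j+1}, \ldots, v_n=y, v_{i-1}, v_{i-2}, \ldots, v_2, x.$$
These are exactly the reroutings drawn in the bottom row of the figure.

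The only point requiring care — and the one place where an orientation slip could creep in — is to check in each case that the three subpaths partition $V(G)$ and that each chord is incident to the correct endpoints of the consecutive subpaths, so that every vertex is visited exactly once. This reduces to confirming that consecutive indices match at each junction (for instance, the subpath ending at $v_j$ is joined to $v_i$ by $v_jv_i$, while the subpath starting at $v_{i+1}$ is entered from $y$ via $yv_{i+1}$), together with the trivial boundary remark that the relevant hypothesis is vacuous when $v_{j+1}$, $v_{i-1}$ or $v_{j-1}$ falls outside $\{v_1,\dots,v_n\}$. I do not expect any genuine difficulty here: once the correct decomposition is chosen, the verification is pure bookkeeping, exactly as in the two preceding claims.
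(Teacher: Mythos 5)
Your proof is correct and follows essentially the same approach as the paper: both implications are handled by exhibiting an explicit Hamiltonian cycle built from the three available chords, and your two cycles are exactly the paper's cycles traversed in the reverse direction. The boundary remarks (vacuity when $v_{j+1}$, $v_{i-1}$, or $v_{j-1}$ does not exist) are sound and do not change the argument.
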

	
	\begin{proofclaim}
		We note the two cases of (\ref{c:triples}) are symmetrical to the first case of (\ref{c:arretes_biais2}), but for the sake of completeness, we enumerate the vertices of the Hamiltonian cycles below. 
		
		Suppose that $v_iv_j\in E$ with $i<j$. 
		
		If $v_{i+1}y\in E$ and $v_{j+1}x\in E$, then $x, v_2,$ $\ldots, v_i$, $v_j, v_{j-1},$ $\ldots, v_{i+1}, y, v_{n-1}$, $ \ldots, v_{j+1} , x $ is a Hamiltonian cycle in $G$, a contradiction.

		If $v_{i-1}y\in E$ and $v_{j-1}x\in E$, then $x, v_2,$ $\ldots, v_{i-1}, y,$ $v_{n-1}, \ldots, v_j,$ $ v_i, v_{i+1}, \ldots, v_{j-1}x$ is a Hamiltonian cycle in $G$, a contradiction.
	\end{proofclaim}
	
	\begin{claim}\label{c:Sdanssegment} 	
		For all $a<b$ such that $v_ay\in E$ and $v_bx\in E$, there exists $a<s<b$ such that $v_sx,v_sy\notin E$. 
		
		In addition, if $s$ is unique $v_{s-1}y,v_{s+1}x\in E$. 
	\end{claim}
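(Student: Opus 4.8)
The plan is to prove the two assertions separately, relying almost entirely on (\ref{c:arretes_qui_croise}), which in the form I will use says: if $v_i x\in E$ then $v_{i-1}y\notin E$, equivalently (reindexing $j=i-1$), if $v_j y\in E$ then $v_{j+1}x\notin E$. Throughout, call an index $s$ with $a<s<b$ a \emph{gap} if $v_s x\notin E$ and $v_s y\notin E$. The first assertion is that a gap exists, and the second describes the two neighbours of the gap when it is unique.

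For the existence part, I would argue by contradiction: suppose every index $s$ with $a<s<b$ satisfies $v_s x\in E$ or $v_s y\in E$. Starting from $v_a y\in E$ and using the reindexed form of (\ref{c:arretes_qui_croise}), $v_a y\in E$ forces $v_{a+1}x\notin E$, so $v_{a+1}$, not being a gap, must satisfy $v_{a+1}y\in E$. Repeating this, a straightforward induction shows $v_j y\in E$ for every $j$ with $a\le j\le b-1$. Applying (\ref{c:arretes_qui_croise}) once more to $v_{b-1}y\in E$ gives $v_b x\notin E$, contradicting the hypothesis $v_b x\in E$. Hence a gap $s$ exists. Incidentally this also shows $b\ge a+2$, so the open interval is genuinely nonempty.

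For the uniqueness refinement, suppose $s$ is the unique gap; I show $v_{s-1}y\in E$, the statement $v_{s+1}x\in E$ being entirely symmetric, using the pair $(s+1,b)$ in place of $(a,s-1)$. If $s-1=a$ then $v_{s-1}y=v_a y\in E$ and we are done, so assume $a<s-1$. Then $s-1$ lies in the open interval $(a,b)$ and, being different from the gap $s$, it is not a gap, so $v_{s-1}x\in E$ or $v_{s-1}y\in E$. Suppose for contradiction $v_{s-1}y\notin E$; then $v_{s-1}x\in E$. Now the pair $(a,s-1)$ satisfies the hypotheses of the existence part just proven ($v_a y\in E$, $v_{s-1}x\in E$, $a<s-1$), so the interval $(a,s-1)$ contains a gap $s'$ with $a<s'<s-1$. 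But $s'$ is then a gap of $(a,b)$ distinct from $s$, contradicting uniqueness. Therefore $v_{s-1}y\in E$.

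The only genuine idea, and the step I expect to be the crux, is this second bootstrap: reusing the already-established existence statement on the shorter interval $(a,s-1)$ to manufacture a second gap and violate uniqueness. Everything else is a direct chain of applications of (\ref{c:arretes_qui_croise}) together with the book-keeping of the boundary cases $s-1=a$ and $s+1=b$, in which the desired adjacency is handed to us directly by the hypotheses $v_a y\in E$ and $v_b x\in E$. I do not expect to need (\ref{c:arretes_biais2}) or (\ref{c:triples}) here, as (\ref{c:arretes_qui_croise}) alone controls the single-step interaction between the neighbourhoods of $x$ and $y$ along $P$.
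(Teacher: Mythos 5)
Your proof is correct, but it is organized differently from the paper's. The paper proves both assertions at once by two extremal choices: it takes $\alpha$ to be the largest index in $(a,b)$ at which the $x$-adjacency ``switches on'' (i.e.\ $v_\alpha x\notin E$, $v_{\alpha+1}x\in E$) and $\beta$ the smallest index at which the $y$-adjacency ``switches off'' (i.e.\ $v_{\beta-1}y\in E$, $v_\beta y\notin E$); a single application of (\ref{c:arretes_qui_croise}) to each shows that $\alpha$ and $\beta$ are both gaps, and these witnesses come pre-equipped with the edges $v_{\alpha+1}x$ and $v_{\beta-1}y$, so when the gap is unique one must have $s=\alpha=\beta$ and the second assertion is immediate. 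You instead split the claim: existence by contradiction (propagating $v_jy\in E$ along the interval by induction until it collides with $v_bx\in E$ --- this is essentially the contrapositive of the paper's extremal scan), and then the uniqueness refinement by a separate bootstrap, re-applying the existence statement to the subintervals $(a,s-1)$ and $(s+1,b)$ to manufacture a forbidden second gap. Both arguments use only (\ref{c:arretes_qui_croise}), as you predicted. What the paper's route buys is economy: the constructed witnesses carry the required edges, so no second argument is needed. What your route buys is a cleanly quotable existence statement whose self-application handles the refinement, at the cost of being somewhat longer; it also makes explicit the side fact $b\geq a+2$, which the paper leaves implicit.
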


	\begin{proofclaim}
		Let $\alpha$ be the largest integer such that $a<\alpha<b$, $v_{\alpha +1}x\in E$ and $v_{\alpha}x\notin E$. By  (\ref{c:arretes_qui_croise}), such vertex exists and  $v_\alpha y \not\in E$. 
		
		Let $\beta$ be the smallest integer such that $a<\beta<b$, $v_{\beta -1}y\in E$ and $v_{\beta}y\notin E$. By  (\ref{c:arretes_qui_croise}), such vertex exists and $v_\beta x \not\in E$. Now, if $s=\alpha=\beta$, then $v_{s-1}y,v_{s+1}\in E$. 
	\end{proofclaim}

	\subsection{The $2$-closure Lemma}\label{ss:2tough}

	\begin{lemma}\label{l:t=1}
		Let $G=(V,E)$ be a 2-tough graph and let $x$ and $y$ be two non-adjacent vertices of $G$ such that $d(x)+d(y)\geq n-1$. Then $G'=(V(G),E(G)\cup \{xy\})$ is Hamiltonian if and only if $G$ is Hamiltonian.
	\end{lemma}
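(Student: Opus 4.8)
The forward implication is immediate: $G'$ is obtained from $G$ by adding a single edge, so any Hamiltonian cycle of $G$ is one of $G'$. For the converse I argue by contraposition: suppose $G$ is not Hamiltonian while $G'$ is. A Hamiltonian cycle of $G'$ avoiding $xy$ would already be a Hamiltonian cycle of $G$, so every Hamiltonian cycle of $G'$ uses $xy$; deleting that edge yields a Hamiltonian path $P = v_1\dots v_n$ of $G$ with $v_1 = x$ and $v_n = y$. This is precisely the configuration set up before~(\ref{c:arretes_qui_croise}), so all the preliminary claims apply.

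First I would pin down the extremal structure forced by the degree hypothesis. By~(\ref{c:arretes_qui_croise}), for each $i\in\{2,\dots,n\}$ at most one of $v_ix\in E$ and $v_{i-1}y\in E$ can hold; summing over these $n-1$ indices gives $d(x)+d(y)\le n-1$. Together with the hypothesis $d(x)+d(y)\ge n-1$ (and noting that $d(x)+d(y)\ge n$ would already be the classical Bondy--Chv\'atal situation of Lemma~\ref{l:closure-original}, giving a Hamiltonian cycle outright), this forces $d(x)+d(y)=n-1$ and, for \emph{every} $i$, exactly one of the two adjacencies. Reading this off along $P$, each interior vertex is adjacent to $x$ only, to $y$ only, to both, or to neither, in a rigidly determined pattern; since $v_2$ lies on the $x$-side and $v_{n-1}$ on the $y$-side, a telescoping count shows that the number of vertices adjacent to \emph{both} $x$ and $y$ exceeds the number adjacent to \emph{neither} by exactly one.

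Now I would invoke $2$-toughness. Let $W$ be the set of the $m$ vertices adjacent to neither $x$ nor $y$, and consider the cut $S=\{x,y\}\cup W$, of size $m+2$. Deleting $S$ severs $P$ into the $m+1$ blocks delimited by $W$, so if no edge of $G$ joined two distinct blocks we would have $c(G-S)\ge m+1$, and since $2(m+1)>m+2$ for $m\ge 1$ this contradicts $2\,c(G-S)\le|S|$. Claims~(\ref{c:arretes_biais2}) and~(\ref{c:triples}) are exactly the tools for killing the dangerous chords: given the rigid adjacency pattern, a chord $v_av_b$ ($a<b$) running from an $x$-neighbour end of one block to a $y$-neighbour end of a later block forces, via~(\ref{c:arretes_biais2}), a pair of $x,y$-adjacencies that it forbids, so such chords cannot occur; and~(\ref{c:Sdanssegment}) confirms that a genuine gap vertex of $W$ sits between each consecutive $y$-neighbour/$x$-neighbour pair, so the blocks really are separated.

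The main obstacle is to rule out (or absorb) the remaining chords, namely those joining two blocks through vertices on the \emph{same} side and those crossing two consecutive blocks exactly at their boundaries, which~(\ref{c:arretes_biais2})--(\ref{c:triples}) do not by themselves exclude. I expect to handle these by refining the cut, for instance by also deleting the vertices adjacent to both $x$ and $y$: this splits each block at its (unique) both-vertex, roughly doubling the component count to about $2(m+1)$ against a cut of size $2m+3$, where $2\cdot 2(m+1)>2m+3$ holds for all $m\ge 0$ and leaves ample slack for a short case analysis, driven again by~(\ref{c:arretes_biais2})--(\ref{c:Sdanssegment}), showing that any surviving inter-component chord would close a Hamiltonian cycle. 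The degenerate configurations ($m=0$, or empty blocks) must be treated directly; for $m=0$ the refined cut consisting of $x$, $y$ and the single both-vertex already yields two components against a cut of size three, contradicting $2$-toughness.
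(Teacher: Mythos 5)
Your setup is sound and matches the paper's: the reduction to $d(x)+d(y)=n-1$, the observation via (\ref{c:arretes_qui_croise}) that for every $i$ exactly one of $v_ix\in E$, $v_{i-1}y\in E$ holds, and the resulting count that the ``both'' set $D$ exceeds the ``neither'' set $W$ by exactly one, are all correct. The gap is in the toughness violation itself. You cut $\{x,y\}\cup W$ (refined to $\{x,y\}\cup W\cup D$) and need the surviving runs of $x$-only and $y$-only vertices to form many components. But, as you yourself concede, chords joining two runs on the \emph{same} side are not excluded by (\ref{c:arretes_biais2})--(\ref{c:triples}), and they genuinely cannot be: for an edge $v_av_b$ between $x$-only vertices in different runs, (\ref{c:arretes_biais2}) only requires $v_{b-1}y\notin E$, which holds automatically by the exactly-one property at index $b$, and (\ref{c:triples}) merely forces $v_{a+1}$ to be $x$-only rather than in $D$. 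Hence all $x$-only runs may be merged into a single component by such chords, and likewise all $y$-only runs, leaving as few as two components against a cut of size $2m+3$; since $2\cdot 2\leq 2m+3$ for $m\geq 1$, no contradiction with $2$-toughness results. The ``ample slack'' is therefore illusory, and the deferred ``short case analysis'' is not a detail but the entire difficulty; nothing in your sketch shows that a same-side chord closes a Hamiltonian cycle (it does not, in any obvious way). The refined cut also fails outright in degenerate configurations with empty runs, e.g.\ the pattern $x,D,S,D,S,\dots,D,y$, where $\{x,y\}\cup W\cup D$ is the whole vertex set.

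The paper's proof avoids this issue by choosing a different cut, namely $N(W)$ rather than a set containing $W$. Its real work is in (\ref{c:Strucutre_voisinage}) and (\ref{c:voisin_S}): using $|W|=|D|-1$ and the pigeonhole principle over segments, every $v_i\in W$ satisfies $v_{i-1}y,v_{i+1}x\in E$, and then (\ref{c:arretes_biais2}) and (\ref{c:triples}) confine $N(v_i)$ to path-neighbours of $W$-vertices. This gives $|N(W)|\leq 2|W|$, shows $W$ is stable, and ensures that deleting $N(W)$ isolates every vertex of $W$ while $x$ lies in a further component, so $2(|W|+1)\leq|N(W)|\leq 2|W|$, a contradiction --- regardless of how the $x$-only and $y$-only vertices are interconnected among themselves. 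This neighbourhood-confinement step is the heart of the proof and is absent from your proposal; without it (or an equivalent), your component count cannot be salvaged.
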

	\begin{proof}
		
		Let $G$, $x$, $y$ and $G'$ be as in the statement of the lemma. It is obvious that if $G$ is Hamiltonian then $G'$ is also Hamiltonian. In addition, if $d(x)+d(y)\geq n$ then, by the Closure Lemma (Lemma~\ref{l:closure-original}), the result holds. So, we may assume that $d(x)+d(y)=n-1$. 
		
		Suppose, by contradiction that $G'$ is Hamiltonian but $G$ is not Hamiltonian. Let $H$ be a Hamiltonian cycle in $G'$. Observe that $xy\in E(H)$ and that $xy\notin E(G)$, for otherwise $G$ is Hamiltonian. We denote the vertices of the Hamiltonian path $P$ of $G$ by $v_1,v_2,\dots, v_{n-1},v_n$ such that $v_1=x$, $v_n=y$ and $v_iv_{i+1}\in P$ for all $i\leq n-1$.

		\begin{claim}\label{ct2:D_non_vide}
			$|N(x)\cap N(y)|\geq 2$.
		\end{claim}
		
		\begin{proofclaim}
			Suppose that $|N(x)\cap N(y)|\leq 1$. Since $xy \not\in E$, $|N(x)\cup N(y)| \leq n-2$. We also have $d(x) + d(y) = n-1$ and $ d(x) + d(y) = |N(x)\cup N(y)| + |N(x)\cap N(y)|$. Thus, we have $n -1 \leq n-2 + |N(x) \cap N(y)|$, and therefore $|N(x) \cap N(y)| = 1$. Hence there exists $v_i$ such that $N(x)\cap N(y)= \{v_i\}$.
			
			Now observe that $|N(x)\cup N(y)| - |N(x) \cap N(y)| =n-3$ and $|V\sm \{x,y,v_i\}|=n-3$. Therefore, every vertex in $V\sm \{x,y,v_i\}$ is either adjacent to $x$ or adjacent to $y$. 
			
			By (\ref{c:arretes_qui_croise}), we have $N(x)=\{v_2,\dots,v_i\}$ and $N(y)=\{v_i,\dots,v_{n-1}\}$. By (\ref{c:arretes_biais2}), there is no edge between $\{v_1,\dots,v_{i-1}\}$ and $\{v_{i+1},\dots,v_n\}$. Hence $G[V\sm\{v_i\}]$ is disconnected and $G$ is not 2-tough, a contradiction. 
		\end{proofclaim}

		Define the following subsets of $V\sm \{x,y\}$ : $D=N(x)\cap N(y)$ and $S=V\sm (N(x)\cup N(y)\cup \{x,y\})$. In other words, $D$ contains all vertices adjacent to both $x$ and $y$ and $S$ contains all vertices that are adjacent neither to $x$ nor to $y$. By (\ref{ct2:D_non_vide}), $|D|\geq 2$. 
		
		\vspace{2ex}
		\begin{claim}\label{c:Strucutre_voisinage}
			For all $v_i\in S$,  $v_{i-1}y,v_{i+1}x\in E$ and $S$ is a stable set. 
		\end{claim}

		\begin{proofclaim}

			Let  $v_a$ and $v_b$ be two vertices of $D$ such that $a < b$ and  $P(v_a,v_b) \cap D =\{v_a,v_b\}$. That is, the interior vertices of $P(v_a,v_b)$ do not belong to $D$.  There are $|D|-1$ such different pairs of vertices. We will call such path $P(v_a,v_b)$ a {\it segment}. By (\ref{c:arretes_qui_croise}), we have $b\neq a+1$, $v_{a+1}x\notin E$ and $v_{b-1}y\notin E$. 
			
			By (\ref{c:Sdanssegment}), there is a vertex $v_s\in S$ with $a<s<b$. This vertex is unique because there are $|D|-1$ segments $P(v_a,v_b)$ and they do not intersect except on their ends. Every such segment contains at least one vertex from $S$. Observe that $n-1 = d(x)+d(y)=|V|-|\{x,y\}|-|S|+|D| = n-2 -|S| + |D|$. Hence, we have $|S|=|D|-1$. By the pigeon hole principle, $v_s$ is unique for each segment $P(v_a,v_b)$. Again, by (\ref{c:Sdanssegment}) we have $v_{i-1}y,v_{i+1}x\in E$.

			Consider now two different segments $P(v_a,v_b)$ and, $P(v_{a'},v_{b'})$ with vertices $v_s,v_{s'}\in S$ such that $a<s<b\leq a'<s'<b'$. By the argument above, we have $v_{s+1} x , v_{s' -1} y \in E$.  
			By (\ref{c:arretes_biais2}), we have $v_s v_{s'} \not \in E$. Thus, $S$ is a stable set.
		\end{proofclaim}
		
		\begin{claim}\label{c:voisin_S}
			For all $v_i\in S$, 
			
			$N(v_i)\subseteq \{v_j:j<i \mbox{ and }v_{j+1}\in S\}\cup \{v_j:i<j \mbox{ and }v_{j-1}\in S\}$.
		\end{claim}
		
		\begin{proofclaim}
			Suppose that there exists $v_i\in S$ and $v_j\in N(v_i)$. By (\ref{c:Strucutre_voisinage}), we have  $v_{i-1}y\in E$ and $v_{i+1}x\in E$. 
			
			Suppose that $j<i$. By (\ref{c:arretes_biais2}),  we have  $v_{j+1}x\notin E$ and by (\ref{c:triples}),  we have $v_{j+1}y\notin E$. Hence  we have  $v_{j+1}\in S$. 
			
			Suppose now that $i<j$. By (\ref{c:arretes_biais2}),  we have  $v_{j-1}y\notin E$ and by (\ref{c:triples}),  we have  $v_{j-1}x\notin E$. Hence  we have  $v_{j-1}\in S$.
		\end{proofclaim}
		
		By (\ref{c:voisin_S}),  we have $N(S)\subseteq \{v_{i-1},v_{i+1} : v_i\in S\}$. Hence  we have $|N(S)|\leq 2|S|$ and  by (\ref{c:Strucutre_voisinage}), it follows that  $S$ is a stable set. Since $x\notin S\cup N(S)$, the graph $G[V\sm N(S)]$ has at least $|S|+1$ connected components, a contradiction to the fact that $G$ is 2-tough. Therefore $G$ is Hamiltonian. 
		
	\end{proof}

	\subsection{The $(2+\epsilon)$-Closure Lemma}\label{ss:2+e}

	\begin{lemma}\label{l:2+espy}
		Let $G$ be a $(2+\epsilon)$-tough graph ($\epsilon > \frac{1}{4}$)  with $x,y\in V(G)$ such that $d(x)+d(y)\geq n-2$. 
		
		$G$ is Hamiltonian if and only if $G'=G + xy$ is Hamiltonian.
	\end{lemma}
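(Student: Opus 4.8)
The plan is to mirror the proof of Lemma~\ref{l:t=1}, paying for the weaker degree hypothesis $d(x)+d(y)\geq n-2$ with the stronger toughness $2+\epsilon$. First I would dispose of the trivial direction ($G$ Hamiltonian $\Rightarrow G'$ Hamiltonian, since $G\subseteq G'$) and reduce to the critical equality $d(x)+d(y)=n-2$: if the sum is at least $n-1$, then, since a $(2+\epsilon)$-tough graph is $2$-tough, Lemma~\ref{l:t=1} already yields the conclusion. Assuming $G'$ Hamiltonian and $G$ not, every Hamiltonian cycle of $G'$ must use $xy$, so deleting $xy$ produces a Hamiltonian path $P=v_1\dots v_n$ with $v_1=x$, $v_n=y$. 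I would then set $D=N(x)\cap N(y)$ and $S=V\sm(N(x)\cup N(y)\cup\{x,y\})$ exactly as before; the inclusion--exclusion count $d(x)+d(y)=|N(x)\cup N(y)|+|N(x)\cap N(y)|=(n-2-|S|)+|D|$ now forces $|S|=|D|$, that is, one vertex of $S$ more than in the situation of Lemma~\ref{l:t=1}.

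Next I would re-run the segment analysis. Cutting $P$ at the vertices of $D$ yields $|D|-1$ internal segments, and (\ref{c:Sdanssegment}) forces each to contain at least one vertex of $S$. Since $|S|=|D|$, exactly one ``defect'' can occur: either one internal segment carries two vertices of $S$, or one of the two end blocks $P(x,\cdot)$, $P(\cdot,y)$ carries a single vertex of $S$. For every segment with a unique $S$-vertex $v_i$, the addendum of (\ref{c:Sdanssegment}) gives $v_{i-1}y,v_{i+1}x\in E$, and the argument of Claim~\ref{c:voisin_S} then confines $N(v_i)$ to vertices of the form $v_{i\pm1}$; these \emph{good} vertices keep $N(S)$ inside $\{v_{i\pm1}:v_i\in S\}$, a set of size at most $2|S|$. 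The heart of the matter is the defective segment or block, where uniqueness fails. There I can still extract, from the $\alpha,\beta$ of (\ref{c:Sdanssegment}) together with (\ref{c:arretes_biais2}) and (\ref{c:triples}), only one-sided information (say $v_{s-1}y\in E$ for the ``left'' defective vertex and $v_{s'+1}x\in E$ for the ``right'' one) rather than both, so the neighbourhoods of these one or two vertices may reach a controlled number of additional vertices of $N(x)\cup N(y)$, and the two defective vertices of $S$ may even be adjacent.

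Finally I would take $C=N(S)$, augmented by the few extra vertices produced by the defect if necessary, note that the vertices of $S$ contribute at least $|S|-1$ components of $G-C$ (one fewer than $|S|$ only if the defect makes two of them adjacent) and that $x$ lies in a further component, and then bound $|C|$ against this component count. Had all vertices of $S$ been good, as in Lemma~\ref{l:t=1}, one would have $|C|\leq 2|S|$ and at least $|S|+1$ components, so $|C|/c(G-C)<2\leq 2+\epsilon$, already contradicting $(2+\epsilon)$-toughness; it is precisely the single unavoidable defect that can simultaneously merge two $S$-components and enlarge $N(S)$, and a careful count of exactly how many extra cut-vertices it can create is what pushes the critical ratio up to $\tfrac{9}{4}$. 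I expect this accounting to be the main obstacle: I must show that even in the defective regime the ratio $|C|/c(G-C)$ stays strictly below $2+\epsilon$ whenever $\epsilon>\tfrac14$. Once that ratio bound is in hand, the toughness inequality $(2+\epsilon)\,c(G-C)\leq|C|$ fails, which is the desired contradiction and establishes that $G$ is Hamiltonian.
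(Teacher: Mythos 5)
Your setup matches the paper's: the reduction to $d(x)+d(y)=n-2$ via Lemma~\ref{l:t=1}, the sets $D$ and $S$ with $|S|=|D|$, the segment count, and the observation that exactly one ``defect'' beyond the Lemma~\ref{l:t=1} situation can occur. But the endgame you propose has a genuine gap, in two respects. First, the cut-set $C=N(S)$ is the wrong object. The neighbourhood-confinement argument (claim (\ref{c2:voisin_S*}) in the paper) requires \emph{both} $v_{i-1}y\in E$ and $v_{i+1}x\in E$, which is precisely what the defective vertices lack; with only one-sided information, (\ref{c:triples}) confines $N(v_\alpha)$ only to a set of the form $\{v_j : v_{j-1}x\notin E\}$, which can include almost all of $N(y)$. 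So a defective vertex may have arbitrarily large degree, $|N(S)|$ cannot be bounded by $2|S|$ plus a constant, and ``augmenting by a few extra vertices'' cannot repair this. The paper sidesteps the issue by cutting along $N(S^*)$ only, where $S^*$ is the set of non-defective vertices of $S$, and never attempts to isolate the defective pair. (Relatedly, your component count ``at least $|S|-1$'' also assumes only the two defective vertices can be adjacent within $S$, whereas good--defective adjacencies are possible; ruling out too many of them is exactly the content of (\ref{c:voisins_Alpha}).)

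Second, and more fundamentally, no pure toughness-ratio accounting can close the proof, so the bound you defer (``show the ratio stays strictly below $2+\epsilon$'') is not obtainable. With the correct cut-set one gets $c(G-N(S^*))\geq |S^*|+1=|S|-1$ and, after proving that the two defective vertices are consecutive ((\ref{c2:lesDeux})) and that at most one of them has a neighbour in $S^*$ ((\ref{c:voisins_Alpha})), $|N(S^*)|\leq 2|S|-1$. Toughness then gives $(2+\epsilon)(|S|-1)\leq 2|S|-1$, i.e.\ $|S|\leq 1+1/\epsilon$, hence only $|S|\leq 4$; for $\epsilon\in(\tfrac14,\tfrac13]$ the configuration $|S|=4$, $|S^*|=2$ satisfies every such counting inequality and survives. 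The paper eliminates it not by counting but by explicit rotation-type Hamiltonian cycles: (\ref{c:voisins_Alpha}) is itself proved by exhibiting a Hamiltonian cycle of $G$, the minimum-degree bound $\delta\geq 5$ (from $(2+\epsilon)$-toughness) forces $|S|=4$ exactly, and the final case analysis on the position of the defective pair relative to the two $S^*$-vertices produces a Hamiltonian cycle of $G$ in each remaining configuration, contradicting the assumption that $G$ is not Hamiltonian. These cycle constructions are the real content of the proof and are absent from your plan.
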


	\begin{proof}
		
		Let $G=(V,E)$, $x$, $y$ and $G'$ be as in the statement of the Lemma. It is obvious that if $G$ is Hamiltonian then $G'$ is also Hamiltonian. In addition, if $d(x)+d(y)\geq n-1$ then, by Lemma~\ref{l:t=1}, the result holds. So, we may assume that $d(x)+d(y)=n-2$. 
		
		Suppose, by contradiction that $G'$ is Hamiltonian but $G$ is not Hamiltonian. Let $H$ be a Hamiltonian cycle in $G'$. Observe that $xy\in H$ and that $xy\notin E$ for otherwise, $G$ is Hamiltonian. We denote the vertices of the Hamiltonian path $P$ of $G$ by $v_1,v_2,\dots, v_{n-1},v_n$ such that $v_1=x$, $v_n=y$ and $v_iv_{i+1}\in E$ for all $i\leq n-1$.
		
		Define the following subsets of $V\sm \{x,y\}$ : $D=N(x)\cap N(y)$ and $S=V\sm (N(x)\cup N(y)\cup \{x,y\})$. In other words, $D$ contains all vertices adjacent to both $x$ and $y$ and $S$ contains all vertices that are adjacent neither to $x$ nor to $y$.  Let $S^*$ be a subset of $S$ such that $S^*=\{v_i\in S : v_{i-1}y,v_{i+1}x\in E \}$.
		
		A path $P(v_a,v_b)$ is a segment if  $P(v_a,v_b) \cap D =\{v_a,v_b\}$. There are $|D|-1$ segments.

		\begin{claim}\label{c2:voisin_S*}
			For all $v_i\in S^*$, 
			
			$N(v_i)\subseteq \{v_j:j<i \mbox{ and }v_{j+1}\in S\}\cup \{v_j:i<j \mbox{ and }v_{j-1}\in S\}$.
		\end{claim}
		
		\begin{proofclaim}
			Suppose that there exists $v_i\in S^*$ and $v_j\in N(v_i)$. By definition, we have $v_{i-1}y\in E$ and $v_{i+1}x\in E$. 
			
			 By (\ref{c:arretes_biais2}), if $j<i$ then $v_{j+1}x\notin E$ and by (\ref{c:triples}), $v_{j+1}y\notin E$. Hence if $j<i$, then $v_{j+1}\in S$. 
			
			 By (\ref{c:arretes_biais2}), if $i<j$ then $v_{j-1}y\notin E$ and by (\ref{c:triples}), $v_{j-1}x\notin E$. Hence if $i<j$, then $v_{j-1}\in S$.
		\end{proofclaim}

		\begin{claim}\label{c:borne_sur_D}
			$|S|=|D|$ and $|D|\geq 2$. 
		\end{claim}
		
		\begin{proofclaim}
			Observe that $n-2 = d(x)+d(y)=|V\sm\{x,y\}|-|S|+|D| = n-2 -|S|+|D| $ and so $|S|=|D|$. 
			
			Suppose first that $|D|=0$ and so  $N(x)\cup N(y)=V \sm \{x,y\}$. By (\ref{c:arretes_qui_croise}), there exists $v_k$ such that $N(x)=\{v_2,\dots v_k\}$ and $N(y)=\{v_{k+1},\dots v_{n-1}\}$. By (\ref{c:arretes_biais2}), there is no edge between $\{v_2,\dots v_{k-1}\}$ and $\{v_{k+2},\dots v_{n-1}\}$. Therefore $G[V\sm \{v_k,v_{k+1}\}]$ has two connected components, a contradiction to the fact that  $G$ is $(2+\epsilon)$-tough.
			
			Suppose now that $|D|=1=|S|$ and so $|N(x)\cup N(y)|=n-3$.

			Let $v_d\in D$ and $v_s\in S$. The two cases being symmetrical, we may suppose that $d<s$. 
			
			
			By repeated applications of (\ref{c:arretes_qui_croise}), we have $\{v_1, \ldots, v_d\}\subseteq N(x)$ and 
			$\{v_d,\dots,v_{s-1}\}\subseteq N(y)$. By (\ref{c:arretes_qui_croise}), $v_s v_j \not\in E$ for $j = 2, \dots, d-1$.

		Suppose  $v_s\in S^*$. By (\ref{c:arretes_biais2}),  we have  $v_s v_j \not\in E$ for $j= d, d+1, \ldots, s-2$. Suppose $v_s v_j \in E$ for some $j \in \{ s+2, \ldots, n-1 \}$. Considering the edge $v_{s-1}y$, by (\ref{c:triples}),  we have  $v_{j-1} x \not\in E$. This implies $v_{j-1} y \in E$, a contradiction to (\ref{c:arretes_biais2}). Hence $N(v_s)\subseteq \{v_{s-1},v_{s+1}\}$, a contradiction to $G$ being $(2+\epsilon)$-tough. Therefore $v_s\notin S^*$. Since $v_{s-1}y\in E$, $v_{s+1}y\in E$ and by (\ref{c:arretes_qui_croise}),  we have  $N(x)=\{v_1,\dots v_d\}$ and $N(y)=\{v_{d},\dots v_{n-1}\}\sm \{v_s\}$.

			By (\ref{c:arretes_biais2}), there is no edge between $\{v_2,\dots , v_{d-1}\}$ and $\{v_{d+1},\dots , v_{n-1}\}\sm \{v_{s+1}\}$. Therefore $G[V\sm \{v_{s+1},v_d\}]$ has two connected components, a contradiction to $G$ being $(2+\epsilon)$-tough.
		\end{proofclaim}

		\begin{claim}\label{c:tailleS*}
			$|S^*|=|S|-2$. 
		\end{claim}
		
		\begin{proofclaim}
			By (\ref{c:borne_sur_D}),  we have  $|S|=|D|$. There are $|D|-1$ segments. By (\ref{c:Sdanssegment}), each segment must contain at least one vertex $v_s$ of $S$, and  if $v_s$ is the only vertex in $S$ of this segment, then $v_s$ is in $S^*$. By the pigeon hole principal, we have $|S^*|\geq |S|-2$.
			
			By (\ref{c2:voisin_S*}), we have   $N(S^*)\subseteq \{v_i : v_{i-1}\in S \mbox{ or } v_{i+1}\in S\}$ and $S^*$ is a stable set. By definition,  we know that  $x,y\notin N(S^*)$ and so, $c(G[V\sm N(S^*)])\geq |S^*|+1$. Since $G$ is $(2+\epsilon)$-tough,  we have  $(2+\epsilon)(|S^*|+1)\leq |N(S^*)|\leq 2|S|$. If $|S^*|\geq |S|-1$, then  $(2+\epsilon)|S|\leq 2|S|$, a contradiction to $\epsilon>1/4$.
		\end{proofclaim}
		
		\begin{claim}\label{c2:lesDeux}
			There exists $\alpha$ such that $\{v_\alpha,v_{\alpha+1}\}=S\sm S^*$ and $v_{\alpha -1}y, v_{\alpha +2}x\in E$. 
		\end{claim}
		
		\begin{proofclaim}
			By (\ref{c:tailleS*}), we have  $|S^*|=|S|-2=|D|-2$. There are $|D|-1$ segments. By (\ref{c:Sdanssegment}) and the pigeon hole principal, there exists a unique segment $P(v_a,v_b)$ with two vertices $v_\alpha, v_\beta \in S\sm S^*$ such that $\{v_\alpha, v_\beta \} = \{v_a,\dots,v_b\}\cap S$. Without loss of generality, we may assume $\alpha < \beta$. 
			
			Recall that every vertex in $\{v_a, v_{a+1}, \ldots, v_b\} \sm \{v_{\alpha}, v_\beta\}$ must be adjacent to $x$ or $y$. By repeatedly applying (\ref{c:arretes_qui_croise}), it follows that $y$ is complete to  $\{ v_{a+1}, \ldots, v_{\alpha -1}\}$,  and $x$ is complete to  $\{v_{b-1}, \ldots, v_{\beta+1}\}$. Since $v_\alpha \notin S^*$,  we have  $v_{\alpha +1}x\notin E$. Suppose that $v_{\alpha +1}y \in E$.  By (\ref{c:arretes_qui_croise}), $y$ is adjacent to all vertices in $\{v_{\alpha + 2}, \ldots, v_{\beta -1}\}$.   But this implies $v_\beta\in S^*$, a contradiction. Hence  $v_{\alpha +1}y\notin E$, and therefore $v_{\alpha +1}\in S$ and so $v_{\alpha +1}=v_\beta$.
		\end{proofclaim}

		In the following, we let $\{v_\alpha,v_{\alpha+1}\}=S\sm S^*$.
		
		\vspace{2ex}
		\begin{claim}	\label{c:voisins_Alpha}
			$N_{S^*}(v_\alpha)=\emptyset$ or $N_{S^*}(v_{\alpha+1})=\emptyset$.
		\end{claim}
		\begin{proofclaim}
			
			By (\ref{c2:lesDeux}), we have  $v_{\alpha-1}y, v_{\alpha+2}x\in E$.
			
			By (\ref{c:arretes_qui_croise}) and (\ref{c:triples}),  we have  $N(v_\alpha)\subseteq \{v_i : i<\alpha, v_{i+1}x\notin E\}\cup \{v_i : i\geq \alpha+1, v_{i-1}x\notin E\}$ and $N(v_{\alpha+1})\subseteq \{v_i : i\leq\alpha, v_{i+1}y\notin E\}\cup \{v_i, i>\alpha+1, v_{i-1}y\notin E\}$. Since, for all $v_i\in S^*$, $v_{i-1}y\in E$ and $v_{i+1}x\in E$, we have  $N_{S^*}(v_\alpha)\subseteq \{v_i: v_i\in S^*\mbox{ and } i>\alpha+1\}$ and $N_{S^*}(v_{\alpha+1})=\{v_i : v_i\in S^*\mbox{ and } i<\alpha \}$.
			
			Suppose that there exists $v_i\in N_{S^*}(v_\alpha)$ and  $v_j\in N_{S^*}(v_{\alpha+1})$. By previous observation, we have  $j<\alpha<\alpha+1<i$. Since $v_i,v_j\in S^*$, we have  $v_{i-1}y\in E$, and $v_{j+1}x\in E$. Now, $x v_{2} \ldots v_j v_{\alpha+1} v_{\alpha+2} \ldots v_{i-1}y v_{n-1} \ldots v_i v_\alpha v_{\alpha - 1} \ldots v_{j+1}x$ is a Hamiltonian cycle in $G$, a contradiction. 
		\end{proofclaim}

		\begin{claim}	
			$|S^*|\geq 1$. 
		\end{claim}
		
		\begin{proofclaim}
			Suppose that $|S^*|=0$. By (\ref{c:tailleS*}),  we have  $|S|=2$, and by (\ref{c:borne_sur_D}),  $|D|= 2$. Let $D =\{v_a,v_b\}$ ($a<b$). Recall that $S=\{v_{\alpha},v_{\alpha +1}\}$. By (\ref{c:arretes_qui_croise}),  we have  $N(x)=\{v_2,\dots,v_a\}\cup \{v_{\alpha+2},\dots v_b\}$ and $N(y)=\{v_a,\dots,v_{\alpha-1}\}\cup \{v_b,\dots v_{n-1}\}$. 
			
			We will prove that there are no edges between $A=N(x)\sm \{v_a,v_b,v_{\alpha+2}\}\cup \{x\}$ and
			\blue{ $B=(N(y)\sm \{v_a,v_b\})\cup \{v_{\alpha },y \}$. Suppose that $v_i\in A$, $v_j\in B$ such that $v_iv_j\in E$. 
			 Suppose that $i<j$. Then we have $v_{i+1} x, v_{j-1} y \in E$, a contradiction to (\ref{c:arretes_biais2}). Suppose that $i>j$. This implies $i \in \{ \alpha + 3, \dots, b-1\}$ and $j \in \{ a+1, \ldots, \alpha \}$. Since $v_{i-1} x, v_{j-1} y \in E$, we have a contradiction to (\ref{c:triples}). }   
			
			
			Hence there is no edge between $A$ and $B$ and so $G[V\sm \{v_a,v_{\alpha+1},v_{\alpha+2},v_b\}]$ has at least two connected components, a contradiction to $G$ being $(2+\epsilon)$-tough.
		\end{proofclaim}
		
		\begin{claim}	
			$|S|=4$
		\end{claim}
		
		\begin{proofclaim}
			
			By (\ref{c2:voisin_S*}) and (\ref{c:voisins_Alpha}), we have $|N(S^*)|\leq 2|S|-1$.
			 Recall that $S^*$ is a stable set by (\ref{c2:voisin_S*}). In addition, we have $x\notin S^*\cup N(S^*)$. Hence $G[V\sm N(S^*)]$ has at least $|S^*|+1$ connected components. Since $G$ is $(2+\epsilon)$-tough, we have $(2+\epsilon)(|S^*|+1)\leq 2|S|-1$. By (\ref{c:tailleS*}), it follows that $(2+\epsilon)(|S|-1)\leq 2|S|-1$ and so $|S|\leq 1+1/\epsilon$. But $\epsilon>1/4$, so  $|S|\leq 4$.
			
			Now, by (\ref{c2:voisin_S*}), for all $v_i\in S^*$, we have $|N(v_i)|\leq |S|+1$. Since $G$ is $(2+\epsilon)$-tough, we have $|N(v_i)|\geq 4+2\epsilon $. Hence $|S|+1\geq 5$ and by a previous observation, we have $|S|=4$.
		\end{proofclaim}
		
		Now we know that $|S|=|D|=4$ and $|S^*|=2$. Let $S=\{v_i,v_j,$ $v_\alpha,v_{\alpha+1}\}$ with $i<j$ and $v_i,v_j\in S^*$.
		
		Since $G$ is $(2+\epsilon)$-tough, every vertex in $G$ has degree at least 5. By (\ref{c2:voisin_S*}), we have  $N(v_i)=\{v_{i-1},v_{i+1},v_{j+1},v_{\alpha+1},v_{\alpha+2}\}$ if $\alpha>i$ or $N(v_i)=\{v_{i-1},v_{i+1},$ $v_{j+1},$ $v_{\alpha-1},v_{\alpha}\}$ if $\alpha<i$, and  $N(v_j)=\{v_{j-1},v_{j+1},v_{i-1},$ $v_{\alpha+1},v_{\alpha+2}\}$ if $\alpha>j$ or $N(v_i)=\{v_{j-1},v_{j+1},v_{i-1},$ $v_{\alpha-1},v_{\alpha}\}$ if $\alpha<j$.
		\blue{If $i < \alpha < j$, then $v_{\alpha +1} v_i , v_{\alpha} v_j \in E$, a contradiction to  (\ref{c:voisins_Alpha})}. Hence either $i<j<\alpha$ or $\alpha<i<j$. 
		
		If $i<j<\alpha$, then $x v_2 \ldots v_i v_{\alpha+1} v_{\alpha} v_{\alpha -1} \ldots v_j v_{\alpha+2} v_{\alpha + 3} \ldots $ $ y v_{j-1} v_{j-2} \ldots v_{i+1}x $ is a Hamiltonian cycle in $G$, a contradiction. So $\alpha<i<j$ and $x v_2 \ldots  v_{\alpha-1} v_i v_{i-1} \ldots v_\alpha v_j  v_{j+1} \ldots yv_{j-1} v_{j-2} \ldots  v_{i+1}x$ is a Hamiltonian cycle in $G$, a contradiction. 
		
	\end{proof}

	\subsection{A closure lemma for t-tough graphs}\label{ss:t>2}
	
	\begin{lemma}\label{l:general}
		Let $G$ be a $\frac{3t-1}{2}$-tough graph ($t\geq 2$, $t$ is an integer) with $x,y\in V(G)$ such that $d(x)+d(y)= n-t$.
		
		$G$ is Hamiltonian if and only if $G^*=(V(G),E(G)\cup \{xy\})$ is Hamiltonian.
	\end{lemma}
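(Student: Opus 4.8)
The plan is to prove the contrapositive of the nontrivial (``if'') direction; the forward implication is immediate, since adding the edge $xy$ cannot destroy a Hamiltonian cycle. When $t=2$ the hypothesis coincides with that of Lemma~\ref{l:2+espy} for $\epsilon=\tfrac12$ (indeed $\frac{3\cdot 2-1}{2}=2+\tfrac12$ and $\tfrac12>\tfrac14$), so that case is already settled and I assume $t\ge 3$. Suppose for contradiction that $G^{*}$ is Hamiltonian but $G$ is not. Every Hamiltonian cycle of $G^{*}$ must use the new edge $xy$ (otherwise $G$ itself would be Hamiltonian), so deleting $xy$ from it yields a Hamiltonian path $P=v_1\dots v_n$ of $G$ with $v_1=x$ and $v_n=y$. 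I then set up the same apparatus as in the proofs of Lemmas~\ref{l:t=1} and~\ref{l:2+espy}: the sets $D=N(x)\cap N(y)$ and $S=V\sm(N(x)\cup N(y)\cup\{x,y\})$, the distinguished subset $S^{*}=\{v_i\in S:\ v_{i-1}y,\,v_{i+1}x\in E\}$, and the $|D|-1$ \emph{segments} cut out along $P$ by consecutive vertices of $D$. Counting neighbours exactly as before gives the identity $|S|=|D|+t-2$.

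First I would transcribe the structural claims for $S^{*}$. For $v_i\in S^{*}$ the forbidden-configuration claims~(\ref{c:arretes_biais2}) and~(\ref{c:triples}), applied to each edge $v_iv_j$, confine the neighbourhood to $N(v_i)\subseteq\{v_j:j<i,\ v_{j+1}\in S\}\cup\{v_j:j>i,\ v_{j-1}\in S\}$, exactly the analogue of~(\ref{c2:voisin_S*}). Two consequences follow: $S^{*}$ is a stable set, and $|N(v_i)|\le|S|+1$ for each $v_i\in S^{*}$. Since a $\frac{3t-1}{2}$-tough graph has minimum degree at least $2\cdot\frac{3t-1}{2}=3t-1$, as long as $S^{*}\neq\emptyset$ any $v_i\in S^{*}$ gives $3t-1\le|N(v_i)|\le|S|+1$, whence $|S|\ge 3t-2$ and therefore $|D|=|S|-t+2\ge 2t$; in particular $|D|\ge 2$ and there are many segments.

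The contradiction comes from a two-sided estimate on the cut set $C=N(S^{*})$. By~(\ref{c:Sdanssegment}) every segment contains a vertex of $S$, and a segment containing exactly one such vertex has it in $S^{*}$. Since the segments carry $|S|=|D|+t-2$ vertices of $S$ among $|D|-1$ segments, the total excess over one-per-segment is only $t-1$, so at most $t-1$ segments contain more than one vertex of $S$; hence at least $|D|-t$ segments are singletons and $|S^{*}|\ge|D|-t=|S|-2(t-1)$. On the other side, the neighbourhood description gives $N(S^{*})\subseteq\{v_{j-1},v_{j+1}:v_j\in S\}$, so $|N(S^{*})|\le 2|S|$, while every vertex of $S^{*}$ is isolated in $G-C$ and $x\notin S^{*}\cup C$, so $c(G-C)\ge|S^{*}|+1$. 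Toughness now forces $\frac{3t-1}{2}\bigl(|S^{*}|+1\bigr)\le|N(S^{*})|\le 2|S|$. Substituting $|S^{*}|+1\ge|S|-2t+3$ and simplifying, the left side exceeds the right whenever $|S|\ge 3t-2$: at the threshold $|S|=3t-2$ the difference equals $\tfrac12(3t-7)(t-1)$, which is strictly positive for $t\ge 3$, and the left side grows faster in $|S|$. This contradicts $|S|\ge 3t-2$.

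The step I expect to be the genuine obstacle is the degenerate configuration $S^{*}=\emptyset$, which the argument above quietly sets aside. In that case the counting forces every segment to carry at least two vertices of $S$, so $|D|\le t$ and $|S|\le 2t-2$, and the minimum-degree argument breaks down because a vertex of $S\sm S^{*}$ need not have its neighbourhood confined to path-neighbours of $S$. These bounded cases, together with the trivial small values $|D|\le 1$, must be handled directly, either by producing a slightly modified cut set that still yields more than $\tfrac{2}{3t-1}|C|$ components or by assembling an explicit Hamiltonian cycle of $G$ from the known adjacencies, in the spirit of the $|S|=4$ endgame of Lemma~\ref{l:2+espy}. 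Sharpening the cut-set bound in these cases relies on the delicate fact that within each segment the vertices of $S$ occur consecutively, which I would obtain by extending the exchange arguments behind~(\ref{c:arretes_biais2}) and~(\ref{c:triples}); this consecutiveness is where I expect most of the technical effort to go.
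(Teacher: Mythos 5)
The first half of your proposal is correct and is essentially the paper's own argument: the identity $|S|=|D|+t-2$ is claim~(\ref{cf:raltion_S_et_D}), the confinement $N(v_i)\subseteq \{v_j : j<i,\ v_{j+1}\in S\}\cup\{v_j : j>i,\ v_{j-1}\in S\}$ for $v_i\in S^*$ is claim~(\ref{cf:Voisins_de_S*}), your segment count $|S^*|\geq |S|-2t+2$ is claim~(\ref{cf:taille_S_S*}), and your contradiction for $S^*\neq\emptyset$ (minimum degree $\geq 3t-1$ forcing $|S|\geq 3t-2$, against the toughness bound obtained from the cut set $N(S^*)$) is exactly claim~(\ref{cf:S*_vide}); your arithmetic there checks out. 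The only cosmetic difference is that the paper cuts segments between $D^y$- and $D^x$-vertices rather than between consecutive vertices of $D=N(x)\cap N(y)$, which changes nothing in this half.

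The genuine gap is that you stop precisely where the proof becomes hard. The configuration $S^*=\emptyset$, $|S|\leq 2t-2$ is not a degenerate leftover to be ``handled directly''; it is the main case, and it occupies most of the paper's proof. The paper resolves it by introducing the further classes $S^0$, $S^2$, $D^1$, $D^2$, $D^x$, $D^y$ and building an explicit cut set: it defines three sets $A$, $X$, $Y$, proves they partition $V$ (claim~\ref{cf:AXY_partition}), proves there is \emph{no} edge between $X$ and $Y$ via a delicate case analysis using (\ref{c:arretes_qui_croise})--(\ref{c:triples}) (claim~\ref{cf:deconectons}), and proves the counting bound $|A|\leq |S|+t$ (claims~\ref{cf:ceux_qui_restent} and~\ref{cf:taille1A}); toughness then gives $3t-1\leq |A|\leq |S|+t$, i.e.\ $|S|\geq 2t-1$, contradicting $|S|\leq 2t-2$. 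Neither of your two suggested remedies supplies this. The ``slightly modified cut set'' is indeed the right idea, but constructing it and bounding its size is where all the work lies, and nothing in your sketch indicates how; in particular, your proposed key fact --- that the $S$-vertices inside each segment occur consecutively --- is not what the paper establishes and is not needed: what is actually proved is the weaker claim~(\ref{cf:ceux_qui_restent}), that every segment contains a vertex $v_\alpha\in S^2$ with $v_{\alpha+1}x\in E$. Your fallback of assembling an explicit Hamiltonian cycle ``in the spirit of the $|S|=4$ endgame'' also cannot work uniformly, since here $|S|$ can be as large as $2t-2$, which grows with $t$, whereas the endgame of Lemma~\ref{l:2+espy} exploits a fixed-size configuration. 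So the proposal reproduces claims~(\ref{cf:raltion_S_et_D})--(\ref{cf:S*_vide}) correctly but leaves the essential second half of the lemma unproved.
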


	\begin{proof}
		
		Let $G$, $x$, $y$ and $G'$ be as in the statement of the Lemma.
		
		Observe that if $t=2$ then the graph is $\frac{5}{2}$-tough and so, $G$ is also $2+\epsilon$-tough for $\epsilon =1/2>1/4$. By Lemma~\ref{l:2+espy}, the results hold. Therefore, we can assume that $t\geq 3$. It is obvious that if $G$ is Hamiltonian then $G'$ is also Hamiltonian.

		Suppose, by contradiction that $G'$ is Hamiltonian but $G$ is not Hamiltonian. Let $H$ be a Hamiltonian cycle in $G'$. Observe that $xy\in H$ and that $xy\notin E$ for otherwise, $G$ is Hamiltonian. We denote the vertices of the Hamiltonian path $P$ of $G$ by $v_1,v_2,\dots, v_{n-1},v_n$ such that $v_1=x$, $v_n=y$ and $v_iv_{i+1}\in E$ for all $i\leq n-1$.
		
		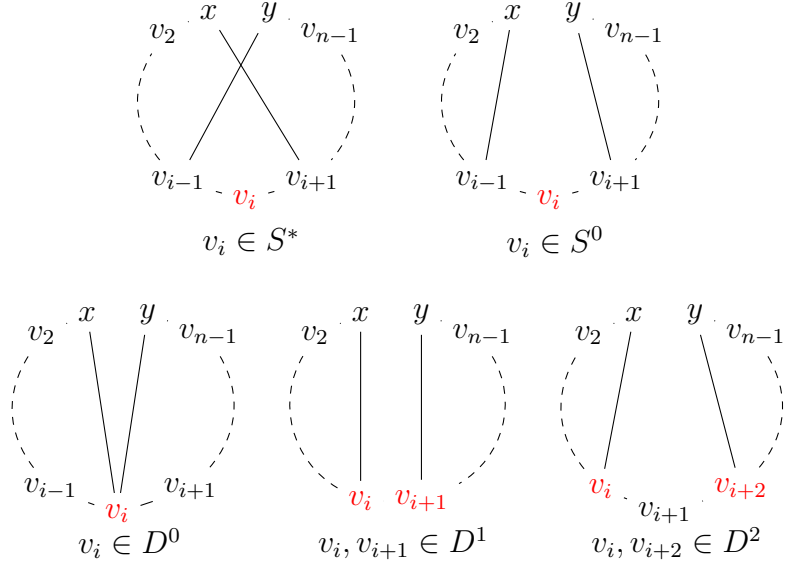
\begin{figure}
			\centering
			\begin{tikzpicture}[scale=0.8]
				
				\begin{scope}[xshift=0cm,yshift=0cm]
					\node[-] (x) at (0,0.1) {$x$};
					\node[-] (a) at (-0.75,-0.25) {$v_2$};
					\node[-] (b) at (-0.5,-2.7) {$v_{i-1}$};
					\node[color=red] (c) at (0.6,-3) {$v_i$};
					\node[-] (d) at (1.7,-2.7) {$v_{i+1}$};
					\node[-] (e) at (2,-0.25) {$v_{n-1}$};
					\node[-] (y) at (1,0.1) {$y$};
					
					\node[-] (name) at (0.7,-3.7) {$v_i\in S^*$};
					
					\draw[-] (x) -- (a);
					\draw[dashed] (b) to[bend left=35] (a);
					\draw[-] (b) -- (c);
					\draw[-] (c) -- (d);
					\draw[dashed] (e) to[bend left=35] (d);
					\draw[-] (e) -- (y);
					
					\draw[-] (x) -- (d);
					\draw[-] (y) -- (b);
					
				\end{scope}

				\begin{scope}[xshift=5cm,yshift=0cm]
					\node[-] (x) at (0,0.1) {$x$};
					\node[-] (a) at (-0.75,-0.25) {$v_2$};
					\node[-] (b) at (-0.5,-2.7) {$v_{i-1}$};
					\node[color=red] (c) at (0.6,-3) {$v_i$};
					\node[-] (d) at (1.7,-2.7) {$v_{i+1}$};
					\node[-] (e) at (2,-0.25) {$v_{n-1}$};
					\node[-] (y) at (1,0.1) {$y$};
					
					\node[-] (name) at (0.7,-3.7) {$v_i\in S^0$};
					
					\draw[-] (x) -- (a);
					\draw[dashed] (b) to[bend left=35] (a);
					\draw[-] (b) -- (c);
					\draw[-] (c) -- (d);
					\draw[dashed] (e) to[bend left=35] (d);
					\draw[-] (e) -- (y);
					
					\draw[-] (x) -- (b);
					\draw[-] (y) -- (d);
					
				\end{scope}

				\begin{scope}[xshift=-2cm,yshift=-5cm]
					\node[-] (x) at (0,0.1) {$x$};
					\node[-] (a) at (-0.75,-0.25) {$v_2$};
					\node[-] (b) at (-0.6,-2.8) {$v_{i-1}$};
					\node[color=red] (c) at (0.5,-3.2) {$v_i$};
					\node[-] (d) at (1.7,-2.8) {$v_{i+1}$};
					\node[-] (e) at (2,-0.25) {$v_{n-1}$};
					\node[-] (y) at (1,0.1) {$y$};
					
					\node[-] (name) at (0.7,-3.7) {$v_i\in D^0$};
					
					\draw[-] (x) -- (a);
					\draw[dashed] (b) to[bend left=35] (a);
					\draw[-] (b) -- (c);
					\draw[-] (c) -- (d);
					\draw[dashed] (e) to[bend left=35] (d);
					\draw[-] (e) -- (y);
					
					\draw[-] (x) -- (c);
					\draw[-] (y) -- (c);
					
				\end{scope}
				
				\begin{scope}[xshift=2.5cm,yshift=-5cm]
					\node[-] (x) at (0,0.1) {$x$};
					\node[-] (a) at (-0.75,-0.25) {$v_2$};
					\node[color=red] (b) at (0,-3) {$v_{i}$};
					\node[color=red] (d) at (1,-3) {$v_{i+1}$};
					\node[-] (e) at (2,-0.25) {$v_{n-1}$};
					\node[-] (y) at (1,0.1) {$y$};
					
					\node[-] (name) at (0.7,-3.7) {$v_i,v_{i+1}\in D^1$};
					
					\draw[-] (x) -- (a);
					\draw[dashed] (b) to[bend left=45] (a);
					\draw[-] (b) -- (d);
					\draw[dashed] (e) to[bend left=45] (d);
					\draw[-] (e) -- (y);
					
					\draw[-] (x) -- (b);
					\draw[-] (y) -- (d);
					
				\end{scope}
				
				\begin{scope}[xshift=7cm,yshift=-5cm]
					\node[-] (x) at (0,0.1) {$x$};
					\node[-] (a) at (-0.75,-0.25) {$v_2$};
					\node[color=red] (b) at (-0.55,-2.8) {$v_{i}$};
					\node[-] (c) at (0.5,-3.2) {$v_{i+1}$};
					\node[color=red] (d) at (1.75,-2.8) {$v_{i+2}$};
					\node[-] (e) at (2,-0.25) {$v_{n-1}$};
					\node[-] (y) at (1,0.1) {$y$};
					
					\node[-] (name) at (0.7,-3.7) {$v_{i},v_{i+2}\in D^2$};
					
					\draw[-] (x) -- (a);
					\draw[dashed] (b) to[bend left=35] (a);
					\draw[-] (b) -- (c);
					\draw[-] (c) -- (d);
					\draw[dashed] (e) to[bend left=35] (d);
					\draw[-] (e) -- (y);
					
					\draw[-] (x) -- (b);
					\draw[-] (y) -- (d);
					
				\end{scope}
				
			\end{tikzpicture}
			\caption{Illustrations of sets defined in the proof of Lemma~\ref{l:general}}
		\end{figure}\label{f:sets_general_case}
		We need to define several sets. Some of them are illustrated in Figure~\ref{f:sets_general_case}:

		\begin{itemize}
			\item $S=\{v_i : v_ix,v_iy\notin E\}$
			\item $S^*=\{v_i : v_i\in S, v_{i-1}y,v_{i+1}x\in E\}$
			\item $S^0=\{v_i : v_i\in S, v_{i-1}x,v_{i+1}y\in E\}$
			\item $S^2=\{v_i : v_i\in S, v_{i+1}\in S\mbox{ or } v_{i-1}\in S\}$
		\end{itemize}
		
		\begin{itemize}
			\item $D^0=\{v_i : v_ix,v_iy\in E\}$
			\item $D^1=\{v_i,v_{i+1} : v_i,v_{i+1}\notin D^0, v_ix,v_{i+1}y\in E\}$ 
			\item $D^2=\{v_i,v_{i+2} : v_i,v_{i+2}\notin D^0, v_{i+1}\in S, v_ix,v_{i+2}y\in E\}$
			\item $D^x=\{v_i : v_i\in D^0\cup D^1\cup D^2 \mbox{ and } v_ix\in E\}$
			\item $D^y=\{v_i : v_i\in D^0\cup D^1\cup D^2 \mbox{ and } v_iy\in E\}$
		\end{itemize}

		Observe that $|D^2|=2|S^0|$ and $|D^x|=|D^y|=|D^0|+\frac{|D^1|}{2}+\frac{|D^2|}{2}$.

		A path $P(v_a,v_b)$ is a {\it segment} \blue{of $P$} if $v_a\in D^y$, $v_b\in D^x$ and for all $v_k$, $a<k<b$,  $v_k\notin (D^x\cup D^y)$. There are $|D^x|-1$ segments because for each $v_i \in D^x$, there is a $v_j \in D^y$ with $j \in \{i,i+1, i+2\}$. 

		\begin{claim}	\label{cf:raltion_S_et_D}
			$|S|=|D^0|+t-2$
		\end{claim}
		
		\begin{proofclaim} 
			
			Observe that $d(x)+d(y)=n-|S|+|D^0|-2$. Since $d(x)+d(y)=n-t$, the result follows. 
		\end{proofclaim}

		\begin{claim}\label{cf:Voisins_de_S*}
			$\forall v_i\in S^*$, $N(v_i)\subseteq \{v_j : j<i, v_{j+1}\in S\}\cup \{v_j : j>i,v_{j-1}\in S\}$. 
			
			In addition, $S^*$ is a stable set and $|N(S^*)|\leq 2|S| $. 
		\end{claim}

		\begin{proofclaim}
			
			Let $v_i\in S^*$. By definition, we have $v_{i-1}y, v_{i+1}x\in E$. 
			
			By (\ref{c:arretes_biais2}),  we have  $N(v_i)\subseteq \{v_j : j<i, v_{j+1}x\notin E\}\cup \{v_j : j>i,v_{j-1}y\notin E\}$. 
			
			By (\ref{c:triples}),  we have  $N(v_i)\subseteq \{v_j : j<i, v_{j+1}x,v_{j+1}y\notin E\}\cup \{v_j: j>i,v_{j-1}y ,v_{j-1}x\notin E\}$ and so $N(v_i)\subseteq \{v_j : j<i, v_{j+1}\in S\}\cup \{v_j : j>i,v_{j-1}\in S\}$.
			The next part of the claim directly follows. 
		\end{proofclaim}

		\begin{claim}	\label{cf:taille_S_S*}
			$|S^*|\geq |S|-2t+2$.
		\end{claim}
		
		\begin{proofclaim}
			There are $|D^x|-1$ segments. By (\ref{c:Sdanssegment}), we know that every segment contains at least one vertex from $S$. In addition, if a segment contains only one vertex from $S$, then this vertex is in $S^*$. There are at most $|S|-(|D^x|-1)$ segments containing more than 2 vertices from $S$. Hence, there are at least $|D^x|-1-(|S|-|D^x|+1)=2|D^x|-|S|-2$ segments containing only one vertex from $S$. Therefore $|S^*|\geq 2|D^x|-|S|-2$. Obviously,  we have  $|D^x|\geq |D^0|$. By (\ref{cf:raltion_S_et_D}),  we have  $|D^0| =  |S|-t+2$ and so $|S^*|\geq |S|-2t+2$.
		\end{proofclaim}

		\begin{claim}\label{cf:S*_vide}
			$S^*=\emptyset$ and $|S|\leq 2t-2$. 
		\end{claim}

		\begin{proofclaim}
			
			Suppose that $S^*\neq \emptyset$. By (\ref{cf:Voisins_de_S*}),  we have  $|N(v_i)|\leq |S|+1$,   for all $ v_i\in S^*$. Since $G$ is $\frac{3t-1}{2}$-tough,  we have  $3t-1\leq |N(v_i)|$ and so, $|S|\geq 3t-2$.

			Again, by (\ref{cf:Voisins_de_S*}), $S^*$ is a stable set and $|N(S^*)|\leq 2|S|$. In addition, $x\notin S^*\cup N(S^*)$. Hence, $G[V\sm N(S^*)]$ has at least $|S^*|+1$ connected components. Since $G$ is $\frac{3t-1}{2}$-tough :

			$ 
			\begin{array}{r c l}
				\frac{3t-1}{2}(|S^*|+1)  & \leq & 2|S| \\
				\Rightarrow (3t-1)(|S| -2t +3)   & \leq & 4|S| \mbox{ By (\ref{cf:taille_S_S*} ) }\\
				\Rightarrow (3t-1)|S| -(3t-1)(2t -3)   & \leq & 4|S|\\
				\Rightarrow (3t-5)|S|   & \leq & (3t-1)(2t-3)\\
				\Rightarrow (3t-5)|S|   & \leq & (3t-5)(2t)-(t-3)\\
				\Rightarrow |S|   & \leq & 2t -\frac{t-3}{3t-5} \mbox{ because } 3t-5 > 0\\
				\Rightarrow |S|   & \leq & 2t \mbox{ because } t \geq 3
			\end{array}
			$
			
			\vspace{2ex}
			
			By the previous observation,   we have  $|S|\geq 3t-2$ and so $3t-2\leq 2t$ and $t\leq 2$ a contradiction. The  second part of the claim follows from (\ref{cf:taille_S_S*}).
		\end{proofclaim}
		
		
		We now need to define three other sets : 
		
		\vspace{2ex}
		$ 
		\begin{array}{r c l}
			A  & = & D^0\cup D^1\cup S^0\cup (S^2\sm \{v_i:v_i\in S^2 \mbox{ and } v_{i+1}x\in E\})\\
		& & \cup \{v_i : v_i\notin S\cup \{x,y\} \mbox{ and }v_{i+1}\in S\}\\
			  &   &\cup \{v_i : v_i\notin S\cup \{x,y\}, v_ix\notin E  \mbox{ and } v_{i-1}\in S\} \\
			  &&\\
			X & = & (N(x)\sm A)\cup \{x\}\cup \{v_i\in S\sm S^0 : v_{i+1}x\in E\}\\
			&&\\
			Y & = & (N(y)\sm A)\cup \{y\}\cup \{v_i\in S\sm S^0 : v_{i-1}y,v_{i+1}y\in E\}
		\end{array}
		$
	
		\vspace{2ex}
		\begin{claim}	\label{cf:AXY_partition}
			$X$, $Y$ and $A$ form a partition of $V$.
		\end{claim}
		
		\begin{proofclaim}
			
			It is easy to check that $X\cap A=\emptyset$ and $Y\cap A=\emptyset$. 
			
			Suppose that there is $v_i\in X\cap Y$. By the definition of $X$ and $Y$, either $v_i\in S$ or $v_i\in (N(y)\cap N(y)\sm A)$. Since $D^0\cup S^0\subseteq A$,  we have   $v_i\in S\sm S^0$. By definition of $X$ and $Y$, it follows that $v_{i-1}y,v_{i+1}x\in E$ and so $v_i\in S^*$, a contradiction to (\ref{cf:S*_vide}). Hence $X\cap Y=\emptyset$. 
			
			
			Now, we prove that $V\subseteq X\cup Y \cup A$. Obviously,   we have  $N(x)\cup N(y)\cup \{x,y\}\subseteq X\cup Y \cup A$. Suppose there is $v_i\in S$ such that $v_i\notin X\cup Y\cup A$. By (\ref{cf:S*_vide}) and since $S^0\in A$,   we have  $v_i\notin S^0\cup S^*$. Hence either $v_i\in S^2$ or $v_{i-1}x,v_{i+1}x\in E$ or $v_{i-1}y,v_{i+1}y\in E$. By definition of $X$,   we have  $v_{i+1}x\notin E$. But now, if $v_i\in S^2$ then $v_i\in A$ and if $v_{i-1}y,v_{i+1}y\in E$ then $v_i\in Y$. Both cases contradicts $v_i\notin X\cup Y\cup A$.	
		\end{proofclaim}

		\begin{claim}\label{cf:deconectons}
			$G[V\sm A]$ has at least two connected components. 
		\end{claim}
		
		\begin{proofclaim}
			We will prove that there is no edge between $X$ and $Y$.
			
			Suppose by contradiction that $v_i\in X$, $v_j\in Y$ such that $v_iv_j\in E$. \blue{By definitions of $X$ and $Y$, we have $v_i\neq x$, $v_j\neq y$. Consider the vertex $v_i$. We claim that  $v_i y \not \in E$. If $v_i \in S$ then the claim holds by definition of $S$. If $v_i \not\in S$, then $v_i \in N(x) \sm D^0$, and thus $v_i y \not \in E$.  A similar argument shows that $v_j x \not \in E$. }
			
			
			
			\blue{We will prove that $v_{i+1}x\in E$. If $v_i \in (S \sm S^0)$, then by definition of $X$, $v_{i+1}x\in E$ and we are done. So we may assume that $v_i x \in E$, and therefore $v_i \not\in S$. If $v_{i+1} \in S$, then by definition $v_i \in A$, a contradiction. So, we may assume that $v_{i+1} y \in E$. If  $v_{i+1} x \not\in E$, then $v_i \in D^1$, a contradiction. We have proved that $v_{i+1}x\in E$.}
				
			\blue{Suppose that $j < i$. By 	(\ref{c:triples}),   we have  $v_{j+1} y \not \in E$. By definition of $Y$, $v_j \in N(y)$. We must have $v_{j+1} \not\in S$, for otherwise $v_j \in A$, a contradiction. This implies $v_{j+1} x \in E$. But this is a contradiction to (\ref{c:arretes_qui_croise}). So,  $i < j$.   } 
			
			\blue{Suppose that $v_{j+1}y\notin E$. By (\ref{c:arretes_qui_croise}),  we have   $v_{j-1} y \not \in E$. The definition of $Y$ implies that $v_j \in N(y)$, and therefore $v_j \not \in S$. If $v_{j+1} \in S$, then  $v_j \in A$, a contradiction. It follows that  $v_{j+1} x \in E$. But this is a contradiction to (\ref{c:arretes_qui_croise}). So $v_{j+1}y\in E$.}
		
			
			By (\ref{c:arretes_biais2}),  we have   $v_{j-1}y \notin E$. By definition of $Y$,  we have   $v_jy\in E$. If $v_{j-1}x\in E$ then  $v_j\in D^1$, a contradiction. It follows that $v_{j-1}\in S$. By definition,  we have  $v_j \not \in D^0$. So $v_j x \notin E$. This implies $v_j \in A$, a contradiction.  
		\end{proofclaim}
		
		\begin{claim}	\label{cf:ceux_qui_restent}
			$|\{v_i : v_i\in S^2\mbox{ and } v_{i+1}x\in E\}|\geq |D^x|-1$.
		\end{claim}
		
		\begin{proofclaim}		
			Let $P(v_a,v_b)$ be a segment. By (\ref{c:Sdanssegment}) and (\ref{cf:S*_vide}),   we have $|\{v_{a+1},\dots , v_{b-1}\}|\geq 2$. Let $\alpha$ be the smallest integer such that $a<\alpha<b$, $v_\alpha \in S$, $v_{\alpha +1}x\in E$. The integer $\alpha$ exists by (\ref{c:arretes_biais2}). By (\ref{cf:S*_vide}), we have $v_{\alpha -1}y\notin E$.  
			
			Suppose that  $v_{\alpha -1}x\in E$.  By (\ref{c:Sdanssegment}), (applied on $P(v_a,v_{\alpha-1})$) there exists $\beta$ with $a<\beta<\alpha-1$ such that $v_\beta \in S$ and $v_{\beta +1}x\in E$.  But this is a contradiction to the choice of $\alpha$. 
			
			Therefore, $v_{\alpha -1}\in S$ and $v_\alpha \in \{v_i : v_i\in S^2\mbox{ and } v_{i+1}x\in E\}$. Hence every segment contains a vertex from $\{v_i : v_i\in S^2\mbox{ and } v_{i+1}x\in E\}$.  Since there are $|D^x|-1$ segments, the result holds. 
		\end{proofclaim}
		
		\begin{claim}\label{cf:taille1A}
			$|A|\leq |S|+t$.
		\end{claim}
		
		\begin{proofclaim}
			By (\ref{cf:ceux_qui_restent}),  we have $|\{v_i : v_i\in S^2\mbox{ and } v_{i+1}x\in E\}|\geq |D^x|-1$. Consider the following relations.
			
			\vspace{2ex}
			\noindent$ \begin{array}{r c l}
				|\{v_i : v_i\notin S\mbox{ and } v_{i+1}\in S\}| & = & |\{v_i : v_i\notin S\mbox{ and } v_{i+1}\in S^2\}|\\
				& &    +  |\{v_i : v_i\notin S\mbox{ and } v_{i+1}\in S\sm S^2\}| \\
				&\leq& \frac{|S^2|}{2}+|S\sm S^2|\\
				& \leq & \frac{|S^2|}{2}+|S|-|S^2|\\
				& \leq & |S|-\frac{|S^2|}{2}\\
			\end{array}$
			
			\vspace{2ex}
			\noindent$\begin{array}{r c l}
				|\{v_i : v_i\notin S,  v_{i-1}\in S^2\mbox{ and } v_ix\notin E\}| & \leq & |\{v_i : v_i\notin S \mbox{ and } v_{i-1}\in S^2\}\sm \\
				& & \{v_i : v_i\notin S, v_{i-1}\in S^2\mbox{ and } v_ix\in E\}| \\
				& \leq & \frac{|S^2|}{2}-| \{v_i : v_i\notin S, v_{i-1}\in S^2\mbox{ and } v_ix\in E\}|\\
				& \leq & \frac{|S^2|}{2}-|\{v_i : v_i\in S^2\mbox{ and } v_{i+1}x\in E\}|\\
				& \leq & \frac{|S^2|}{2} - |D^x|+1
			\end{array}$
			
			\vspace{2ex}
			\noindent$\begin{array}{r c l}
				|\{v_i : v_i\notin S, v_{i-1}\in S\mbox{ and } v_ix\notin E\}| & \leq & |\{v_i : v_i\notin S, v_{i-1}\in S^2\mbox{ and } v_ix\notin E\}|+  |S\sm S^2| \\
				& \leq & \frac{|S^2|}{2} - |D^x|+1+|S|-|S^2|\\
				& \leq & |S|-\frac{|S^2|}{2}-|D^x|+1
			\end{array}$
			
			\vspace{2ex}
			By combining all this, we get: 
			
			\vspace{2ex} 
			$ 
			\begin{array}{r c l} 
				
				|A|& \leq& |D^0|+|D^1|+|S^0|+|S^2|-|D^x|+1+|S|-\frac{|S^2|}{2}+|S|-\frac{|S^2|}{2}-|D^x|+1 \\
				& \leq & |D^0|+|D^1|+|S^0|-2|D^x|+2|S|+2\\
				& \leq & |D^0|+|D^1|+\frac{|D^2|}{2}-2|D^x|+2|S|+2 \mbox{ by remarks about } S^0\\
				& \leq & \blue{\frac{|D^1|}{2}-|D^x|+2|S|+2} \mbox{ by remark about } D^x\\
				& \leq &  2|S|-|D^0| +2 \mbox{ by remarks about } D^x \mbox{ and } \frac{|D^2|}{2}\geq 0 \\
				& \leq &  2|S|-|S|+t-2+2\mbox{ by (\ref{cf:raltion_S_et_D})}\\
				& \leq &  |S|+t
			\end{array}
			$
			
			\vspace{2ex}
		\end{proofclaim}
		
		We continue the proof of the lemma. Since $G$ is $\frac{3t-1}{2}$-tough, we have

		$
		\begin{array}{r c l}
			
			\frac{3t-1}{2}c(G[V\sm A]) & \leq& |A| \\
			\Rightarrow 3t-1	& \leq & |A| \mbox{ by (\ref{cf:deconectons}) }\\
			\Rightarrow 3t-1 & \leq &  |S|+t \mbox{ by (\ref{cf:taille1A})}\\
			\Rightarrow 2t-1 & \leq &  |S|
		\end{array}
		$
		
		But, $|S|\leq 2t-2$ by (\ref{cf:S*_vide}), a contradiction. 	
	\end{proof}

		
	

	
%
	
	\begin{corollary}
		Let $G$ be a $t'$-tough graph ($t'\geq \frac{5}{2}$) with $x,y\in V(G)$ such that $d(x)+d(y)\geq n-\frac{2t'+1}{3}$.
		
		$G$ is Hamiltonian if and only if $G^*=(V(G),E(G)\cup \{xy\})$ is Hamiltonian. $\qed$
	\end{corollary}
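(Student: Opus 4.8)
The plan is to deduce this corollary from Lemma~\ref{l:general} by a simple reparametrization of the toughness, handling the two boundary regimes with the earlier closure lemmas. Introduce the \emph{deficit} $s = n - d(x) - d(y)$, which is automatically an integer. The hypothesis $d(x)+d(y) \geq n - \frac{2t'+1}{3}$ says exactly that $s \leq \frac{2t'+1}{3}$, and I would split the argument according to the size of $s$.

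First I would dispose of the two easy regimes. If $s \leq 0$, then $d(x)+d(y) \geq n$ and the conclusion is immediate from the original Closure Lemma (Lemma~\ref{l:closure-original}). If $s = 1$, then $d(x)+d(y) = n-1$; since $t' \geq \frac{5}{2} > 2$, the graph $G$ is in particular $2$-tough (toughness is monotone: a $t'$-tough graph is $\tau$-tough for every $\tau \leq t'$, directly from the definition), so Lemma~\ref{l:t=1} applies and gives the equivalence.

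The main case is $s \geq 2$, where I would invoke Lemma~\ref{l:general} with the integer parameter $t := s$. The degree condition $d(x)+d(y) = n-s$ holds by the definition of $s$. For the toughness condition, the inequality $s \leq \frac{2t'+1}{3}$ rearranges to $\frac{3s-1}{2} \leq t'$, so by monotonicity of toughness $G$ is $\frac{3s-1}{2}$-tough, which is precisely the hypothesis of Lemma~\ref{l:general} for $t = s \geq 2$. That lemma then yields that $G$ is Hamiltonian if and only if $G^* = (V(G), E(G)\cup\{xy\})$ is, as required.

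There is no real combinatorial obstacle here, since all the structural work was already done in Lemmas~\ref{l:t=1} and~\ref{l:general}; the only points demanding care are bookkeeping ones. One must note that $s$ is an integer (so that $t := s$ is a legitimate parameter for Lemma~\ref{l:general}, which requires an integer $t \geq 2$) and that the boundary deficits $s \le 1$ fall outside that lemma's reach and so must be routed through Lemma~\ref{l:t=1} and Lemma~\ref{l:closure-original}. It is also worth checking that the threshold $t' \geq \frac{5}{2}$ is exactly what makes $\frac{2t'+1}{3} \geq 2$, so that the smallest nontrivial deficit handled by Lemma~\ref{l:general} (namely $t = 2$, needing $\frac{5}{2}$-toughness) is always available precisely when it is needed.
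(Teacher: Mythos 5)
Your proposal is correct and matches the argument the paper intends (the corollary is stated with only a $\qed$, i.e., as a direct consequence of Lemma~\ref{l:general}): setting the integer deficit $s=n-d(x)-d(y)$, the hypothesis $s\leq\frac{2t'+1}{3}$ rearranges to $\frac{3s-1}{2}\leq t'$, and monotonicity of toughness lets Lemma~\ref{l:general} apply with $t=s$. Your explicit routing of the boundary cases $s\leq 0$ and $s=1$ through Lemma~\ref{l:closure-original} and Lemma~\ref{l:t=1} is a welcome piece of bookkeeping that the paper leaves implicit.
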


	Now, the $t$-Closure Lemma follows directly:
	
	\begin{lemma}[$t$-Closure Lemma]\label{l:closure+}
		
		For $t\geq 2$, a $\frac{3t-1}{2}$-tough graph $G$ is Hamiltonian if and only if its $t$-closure $G^{*t}$ is. $\qed$
	\end{lemma}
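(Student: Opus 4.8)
The only-if direction is immediate: $G^{*t}$ is obtained from $G$ by adding edges, so it contains $G$ as a spanning subgraph and any Hamiltonian cycle of $G$ is also one of $G^{*t}$. The content is the if-direction, and the plan is to strip the added edges off one at a time, applying the single-edge result of Lemma~\ref{l:general} (in the $\geq$ form recorded in the Corollary above) at each step.

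Concretely, I fix a sequence of edge additions that realizes the $t$-closure,
\[
G = G_0 \subsetneq G_1 \subsetneq \cdots \subsetneq G_m = G^{*t},
\]
where $G_{k+1} = G_k + x_ky_k$ with $x_k,y_k$ non-adjacent in $G_k$ and $d_{G_k}(x_k) + d_{G_k}(y_k) \geq n - t$; such a sequence exists by the definition of the $t$-closure. I then argue by downward induction on $k$ that $G_k$ is Hamiltonian whenever $G_{k+1}$ is. The base case $k = m$ is the hypothesis that $G^{*t}$ is Hamiltonian, and the case $k = 0$ gives that $G$ is Hamiltonian, as desired.

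For the inductive step I verify the two hypotheses of the Corollary for the graph $G_k$. For the toughness, note that $G \subseteq G_k$ is a spanning subgraph, so for every vertex set $C$ we have $G - C \subseteq G_k - C$ and hence $c(G_k - C) \leq c(G - C)$; adding edges therefore cannot decrease toughness, and $G_k$ is $\frac{3t-1}{2}$-tough like $G$. Setting $t' = \frac{3t-1}{2}$, the bound $t' \geq \frac{5}{2}$ holds precisely because $t \geq 2$, and a one-line computation gives $n - \frac{2t'+1}{3} = n - t$, so the degree condition on $x_k,y_k$ is exactly the hypothesis $d(x_k) + d(y_k) \geq n - \frac{2t'+1}{3}$ of the Corollary. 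The Corollary then yields that $G_k = G_{k+1} - x_ky_k$ is Hamiltonian if and only if $G_{k+1}$ is, closing the induction.

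I do not expect a genuine obstacle at this level, since all the difficulty has already been absorbed into Lemma~\ref{l:general}; the statement follows by a direct iteration. The only points that require care are the monotonicity of toughness under edge addition, so that every intermediate graph $G_k$ still meets the toughness bound needed to invoke the Corollary, and the bookkeeping that the degree sums are measured in the current graph $G_k$, whose degrees only grow along the sequence, so the threshold $n - t$ continues to hold at each addition.
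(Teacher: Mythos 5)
Your proposal is correct and is essentially the paper's own argument: the paper states the $t$-Closure Lemma with no written proof precisely because it ``follows directly'' from the Corollary of Lemma~\ref{l:general} (with $t'=\frac{3t-1}{2}$, so that $n-\frac{2t'+1}{3}=n-t$ and $t'\geq\frac{5}{2}$ iff $t\geq 2$), iterated over the edges added in the closure, using monotonicity of toughness under edge addition. You have simply made the induction and the bookkeeping explicit, which is exactly what the paper leaves implicit.
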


	\section{Hamiltonian cycle and property $P(t)$}\label{s:Hamiltonian_and_tough}
	
	We now use Lemma~\ref{l:closure+} to prove  Theorem~\ref{t:notre}. 	
	
	Recall the following definition:  A \textit{universal clique} in a graph is set of vertices in the graph that are adjacent to every other vertex of the graph. 
	 
	 Consider a $t$-tough graph $G$ with degree sequence $d_1 \leq d_2 \leq \dots \leq d_n$.
	
	Let $P(t)$ be the predicate : 
	
	$$\forall i <\frac{n}{2},\mbox{ if } d_i\leq i \mbox{ then } d_{n-i+t}\geq n-i$$
	
	 We restate Theorem \ref{t:notre} below.
	
	\noindent {\bf Theoreom \ref{t:notre}} {\it 	If $G$ is a $4$-tough graph satisfying $P(4)$, then $G$ is Hamiltonian.}

	\begin{proof}
		Let $G$ be a $4$-tough graph satisfying $P(4)$.  
		
		Let $d_1,d_2,\dots , d_n$ be the degree sequence of $G$ and let $x_i$ to be the vertex of degree $d_i$. 
		By applying Lemma~\ref{l:closure+} with $t=3$, we may  assume that
		
		\vspace{2ex}
		\begin{claim}\label{c:edge}
		For all $i$ and $j$ such that $d_i+d_j\geq n-3$, we have $x_ix_j\in E$. 
		\end{claim}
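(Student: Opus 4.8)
The plan is to use Lemma~\ref{l:closure+} (the $t$-Closure Lemma) as a reduction step so that, without loss of generality, $G$ may be replaced by its $t$-closure. Since $G$ is $4$-tough and $\frac{3t-1}{2}=4$ exactly when $t=3$, the graph $G$ is $\frac{3\cdot 3-1}{2}$-tough, so Lemma~\ref{l:closure+} applies with $t=3$. The lemma tells us that $G$ is Hamiltonian if and only if its $3$-closure $G^{*3}$ is Hamiltonian. The $3$-closure is obtained by repeatedly joining any two non-adjacent vertices whose degree sum is at least $n-3$; when no such pair remains, every pair of non-adjacent vertices has degree sum strictly less than $n-3$. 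Consequently, to prove Theorem~\ref{t:notre} it suffices to prove that $G^{*3}$ is Hamiltonian, and for that purpose we are free to assume $G$ has already been replaced by $G^{*3}$, i.e.\ that $G$ equals its own $3$-closure. The content of Claim~\ref{c:edge} is precisely the contrapositive restatement of this closure property: if two vertices $x_i,x_j$ have $d_i+d_j\geq n-3$ and were non-adjacent, the closure operation would have forced the edge $x_ix_j$ in, so in the closed graph no such non-edge survives.

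The key steps, in order, are as follows. First I would invoke Lemma~\ref{l:closure+} with $t=3$, verifying the toughness hypothesis: $G$ is $4$-tough, and $\tfrac{3t-1}{2}$ with $t=3$ equals $4$, so the toughness requirement is met and $t=3\geq 2$ is in range. Second, I would note that the closure operation does not decrease any degree and does not create new cut-sets that violate toughness, so $G^{*3}$ remains $4$-tough and continues to satisfy $P(4)$ (indeed, replacing $d_i$ by the possibly larger degrees in $G^{*3}$ only makes the implication ``$d_i\le i\Rightarrow d_{n-i+4}\ge n-i$'' easier to satisfy, since degrees can only rise while the indexed thresholds are unchanged). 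Third, having reduced to the closed graph, I would relabel so that $G$ denotes $G^{*3}$ with its own degree sequence $d_1\le\cdots\le d_n$, and read off the defining property of the $3$-closure: for every pair of non-adjacent vertices the degree sum is below $n-3$. Taking the contrapositive yields exactly the statement of Claim~\ref{c:edge}, namely that $d_i+d_j\geq n-3$ forces $x_ix_j\in E$.

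The step requiring genuine care is the second one: one must confirm that the reduction is sound, i.e.\ that it is legitimate to ``assume'' Claim~\ref{c:edge} when proving the theorem. The logical point is that Theorem~\ref{t:notre} is a statement about Hamiltonicity, and Lemma~\ref{l:closure+} guarantees the equivalence of Hamiltonicity between $G$ and $G^{*3}$; what needs verifying is that the hypotheses of the theorem (namely $4$-toughness and $P(4)$) are inherited by $G^{*3}$, so that the remainder of the argument, conducted on the closed graph, is proving Hamiltonicity of an object that still satisfies all the assumptions. Toughness inheritance is essentially the observation that adding edges can only increase toughness (adding edges cannot increase the number of components of $G-S$ for any $S$), so $G^{*3}$ is at least $4$-tough. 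The preservation of $P(4)$ follows because the degree sequence of $G^{*3}$ dominates that of $G$ termwise, and $P(4)$ is a monotone condition under such domination. With these two inheritances established, the reduction is valid and Claim~\ref{c:edge} is justified.

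I expect the main obstacle here to be not any hard computation but the careful bookkeeping of the monotonicity arguments: one must be precise that the degree sequence of the closure termwise dominates that of the original graph after re-sorting, and that this domination interacts correctly with the indexed inequalities in $P(4)$ (where the index $n-i+4$ could in principle be affected by re-sorting). Since the remainder of the proof of Theorem~\ref{t:notre} will be carried out on the closed graph under the simplifying hypothesis of Claim~\ref{c:edge}, getting this reduction cleanly stated is the crux; everything downstream then exploits the dense edge structure guaranteed by the claim together with the $4$-toughness to construct a Hamiltonian cycle directly.
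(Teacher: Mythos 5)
Your proposal is correct and is exactly the paper's approach: the paper justifies Claim~\ref{c:edge} with the single line ``By applying Lemma~\ref{l:closure+} with $t=3$, we may assume that\ldots'', i.e.\ replacing $G$ by its $3$-closure, whose defining property is precisely the claim. Your additional verification that $4$-toughness and $P(4)$ are inherited by the closure (edges only get added, so toughness and the termwise-dominating degree sequence are preserved) is the bookkeeping the paper leaves implicit, and you carry it out correctly.
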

		
	 	If, $d_i > i$ for all $i<\frac{n}{2}$,  then  $G$ satisfies the condition of Theorem~\ref{t:chvatal} and so, $G$ is Hamiltonian. Hence, we may assume that there exists $k<\frac{n}{2}$ such that $d_k\leq k$. Choose $k$ to be minimum and so for all $i< k$,  $d_i>i$. 
	 	
	 	By the definition of degree sequece, we have $d_{k-1}\leq d_k\leq k$. Since $d_{k-1}>k-1$ (because $k-1< k$), it follows that $d_k=k$ and $d_{k-1} = k$. 
	 	
	 	Since $G$ is $4$-tough, $4c(G \setminus N(x_k))\leq |N(x_k)|$. Hence $8\leq d_k=k$ and so $k\geq 8$. 
		
		For all $\alpha<\frac{n}{2}$, define $U^\alpha=\{x_j : d_j\geq n-\alpha\}$.
		
		\vspace{2ex}
		\begin{claim}\label{c:presque_clique_univ}
			For all $\alpha<\frac{n}{2}$, $U^\alpha$ is a clique complete to  $\{x_i : d_i\geq \alpha-3\}$.
		\end{claim}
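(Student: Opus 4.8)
The plan is to derive everything from Claim~(\ref{c:edge}), which is the only structural input needed; each of the two assertions then reduces to a one-line degree count. Recall that, having passed to the $3$-closure via Lemma~\ref{l:closure+} (legitimate since a $4$-tough graph is $\frac{3\cdot 3-1}{2}$-tough), we may assume $x_ix_j\in E$ whenever $d_i+d_j\geq n-3$. Recall also that $U^\alpha=\{x_j:d_j\geq n-\alpha\}$.

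First I would verify that $U^\alpha$ is a clique. Take distinct $x_i,x_j\in U^\alpha$, so that $d_i,d_j\geq n-\alpha$. Then $d_i+d_j\geq 2(n-\alpha)=2n-2\alpha$, and since $\alpha<\frac{n}{2}$ we have $2\alpha<n$, whence $d_i+d_j>n>n-3$. Claim~(\ref{c:edge}) then yields $x_ix_j\in E$, so $U^\alpha$ is complete to itself, i.e.\ a clique.

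For the second assertion, let $x_j\in U^\alpha$ and let $x_i$ satisfy $d_i\geq\alpha-3$ with $x_i\neq x_j$. Adding the two lower bounds gives $d_i+d_j\geq(\alpha-3)+(n-\alpha)=n-3$, so once more Claim~(\ref{c:edge}) forces $x_ix_j\in E$. This is precisely the statement that $U^\alpha$ is complete to $\{x_i:d_i\geq\alpha-3\}$.

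I do not anticipate a genuine obstacle here: the entire purpose of reducing to the closure is that it converts the degree-sum hypotheses into actual edges, after which the claim is purely arithmetic. The only mild points to keep track of are that the two vertices in question are distinct (automatic once $i\neq j$), and that $U^\alpha$ itself is contained in $\{x_i:d_i\geq\alpha-3\}$, since $n-\alpha\geq\alpha-3$ whenever $\alpha<\frac{n}{2}$; consequently the clique assertion is in fact a special case of the completeness assertion, and one could, if desired, present only the latter.
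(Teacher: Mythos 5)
Your proposal is correct and takes essentially the same route as the paper: both reduce the claim to the degree-sum bound $d_u+d_i\geq(n-\alpha)+(\alpha-3)=n-3$ and then apply Claim~(\ref{c:edge}), with the clique property following from the inclusion $U^\alpha\subseteq\{x_i: d_i\geq \alpha-3\}$. The only cosmetic difference is that you prove the clique part first via a direct (stronger) bound before observing, as the paper does, that it is subsumed by the completeness assertion.
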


		\begin{proofclaim}
			
		For any $x_u\in U^\alpha$ and any $x_i\in \{x_i : d_i\geq \alpha-3\}$, we have $d_u+d_i\geq n-\alpha+\alpha-3$.  By (\ref{c:edge}), we have $x_ux_i\in E$. Hence  $U^\alpha$ is complete  to  $\{x_i : d_i\geq \alpha-3\}$. Since $U^\alpha \subseteq \{x_i : d_i\geq \alpha-3\}$, it follows that $U^\alpha$ is a clique.
		\end{proofclaim}
		
		\begin{claim}	\label{cp4:cas_universal}
			Let $\alpha<\frac{n}{2}$. If for all $i$ with $d_i<\alpha-3$, we have  $d_i\geq i-2$, then $U^\alpha$ is a universal clique.
		\end{claim}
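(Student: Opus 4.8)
The plan is to argue by contradiction, after first reducing the statement to a completeness assertion about the low-degree vertices. By Claim~\ref{c:presque_clique_univ}, $U^\alpha$ is already a clique and is complete to $B:=\{x_i:d_i\geq\alpha-3\}$; since $U^\alpha\subseteq B$, every potential non-neighbour of a vertex $w\in U^\alpha$ must lie in $L:=V\setminus B=\{x_i:d_i<\alpha-3\}$. Hence it suffices to show that $U^\alpha$ is complete to $L$, and the whole content of the claim is concentrated on these low-degree vertices.

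The first substantive step is to locate $L$ inside the degree sequence using the hypothesis. For $x_i\in L$ we have both $d_i<\alpha-3$ and, by assumption, $d_i\geq i-2$; together these give $i\leq d_i+2\leq\alpha-2$, so $L\subseteq\{x_1,\dots,x_{\alpha-2}\}$ and in particular $|L|\leq\alpha-2$. Thus every non-neighbour of every $w\in U^\alpha$ is a low-index vertex. Next I would feed each $x_i\in L$ into Claim~\ref{c:edge}: since $d_i+d_j\geq n-3$ whenever $d_j\geq n-3-d_i$, the vertex $x_i$ is adjacent to every vertex of degree at least $n-3-d_i$. Because $d_i<\alpha-3$ forces $n-3-d_i>n-\alpha$, this set is contained in $U^\alpha$; so $x_i$ already reaches the top of $U^\alpha$, and the only vertices of $U^\alpha$ it can possibly miss are those whose degree falls in the narrow band $[\,n-\alpha,\;n-4-d_i\,]$.

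The main obstacle is to convert a single missing adjacency into a violation of $4$-toughness. Assuming $wx_i\notin E$ with $w\in U^\alpha$ and $x_i\in L$, the goal is to build a cut-set $C$ for which $c(G[V\setminus C])>\tfrac{|C|}{4}$. I expect the right $C$ to be assembled from the neighbourhoods of the problematic low-index vertices together with (part of) $U^\alpha$, so that the low-degree vertices of $L$ are split off as many small components while the near-universal vertices of $U^\alpha$ collapse the remainder into few pieces. The delicate point is the bookkeeping: one must bound $|C|$ from above using the index cap $i\leq\alpha-2$ (equivalently the degree bound $d_i\geq i-2$, which keeps both $|L|$ and the removed neighbourhoods small), while simultaneously forcing the number of components above $|C|/4$, and these two estimates must be tight enough to beat the factor $4$. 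I anticipate it is cleanest to choose $x_i\in L$ extremally (for instance of largest index, hence of the most constrained degree) so that the band $[\,n-\alpha,\,n-4-d_i\,]$ and the set $L$ are as small as possible; the remaining verification should then reduce to a direct toughness computation of the kind already carried out in the proof of Lemma~\ref{l:general}.
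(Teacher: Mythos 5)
Your reduction is sound as far as it goes: by (\ref{c:presque_clique_univ}) any non-neighbour of a vertex of $U^\alpha$ lies in $L=\{x_i : d_i<\alpha-3\}$, and the hypothesis places $L$ among the first $\alpha-2$ vertices of the degree sequence; this matches the paper's setup. But the decisive step of your argument is never carried out: everything after ``the main obstacle'' is a plan (``I expect \dots'', ``I anticipate \dots'') to manufacture a cut-set violating $4$-toughness, with no actual construction, no bound on $|C|$, and no lower bound on the number of components. There is no reason a single missing edge should produce any disconnection at all --- the vertices of $U^\alpha$ are adjacent to nearly everything, so deleting the neighbourhoods of a few low-degree vertices will typically leave the graph connected --- and in fact the claim needs no toughness computation whatsoever.

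The missing idea is an extremal choice different from yours. You propose taking $x_i\in L$ of largest index; the paper instead fixes $x_u\in U^\alpha$ having a non-neighbour and takes $i$ maximum among \emph{all non-neighbours of $x_u$}. That choice converts the index $i$ into a degree bound on $x_u$: by maximality, $x_u$ is adjacent to every $x_j$ with $j>i$, $j\neq u$, hence $d_u\geq n-i-1$. Combined with $d_i\geq i-2$ from the hypothesis (the non-neighbour lies in $L$, as you showed), this yields $d_u+d_i\geq (n-i-1)+(i-2)=n-3$, and then (\ref{c:edge}) forces $x_ux_i\in E$, a contradiction. The proof is complete in three lines once this choice is made; toughness enters only through (\ref{c:edge}) itself, which you already had in hand.
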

		
		\begin{proofclaim}
			Suppose that $\alpha$ satisfies the hypothesis of (\ref{cp4:cas_universal}). 
			
			Suppose by contradiction that $U^\alpha$ is not a universal clique. That is, there exists $x_u\in U^\alpha$ and $x_i\notin N(x_u)$. Choose $i$ to be maximum (i.e., for all $j>i$, $u \not= j$, we have  $x_jx_u\in E$). By (\ref{c:presque_clique_univ}), we have $d_i<\alpha-3$. By the hypothesis of the claim, we have $d_i\geq i-2$. It follows that $d_i = i-2$. By the maximality of $i$, we know that $x_u$ is adjacent to all vertices $x_j$ with $j > i, j \not= u$, that is, 
			$d_u\geq n-i-1$. Hence $d_u+d_i\geq n-i-1+i-2=n-3$, a contradiction to (\ref{c:edge}).
		\end{proofclaim}

		In the following, denote by $\Omega$ the maximum universal clique in $G$ (with the maximum number of vertices). Note that $\Omega \leq \frac{n}{2} - \frac{3}{2}$, for otherwise  by (\ref{c:edge}), $G$ is a clique, a contradiction. 

		\vspace{2ex}
		\begin{claim}	\label{cp4:tailleOmega}
			$|\Omega|\leq k-2$ 
		\end{claim}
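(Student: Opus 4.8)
Looking at this claim, I need to prove that the maximum universal clique $\Omega$ has size at most $k-2$, where $k$ is the minimum index with $d_k \leq k$ and we've established $d_k = d_{k-1} = k \geq 8$.

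Let me think about what the universal clique means and how it relates to $k$.

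The plan is to argue by contradiction, assuming $|\Omega| \geq k-1$, and to exploit the defining feature of a universal clique, namely that each of its vertices is adjacent to \emph{every} other vertex of $G$.

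First I would record the trivial upper bound. Since every vertex of $\Omega$ has degree $n-1$ while $d_k = k < n-1$ (because $k < \frac{n}{2}$), the vertex $x_k$ is not in $\Omega$, and every vertex of $\Omega$ is adjacent to $x_k$; hence $\Omega \subseteq N(x_k)$ and $|\Omega| \leq d_k = k$. So under the contradiction hypothesis we are in one of the two cases $|\Omega| = k$ or $|\Omega| = k-1$; write $m = |\Omega|$. The structural engine is the behaviour of $G - \Omega$: because $\Omega$ is universal, every $x_i \notin \Omega$ is adjacent to all $m$ vertices of $\Omega$, so its degree in $G - \Omega$ is exactly $d_i - m$. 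Applying this to the $k$ smallest-degree vertices $x_1,\dots,x_k$, which all lie outside $\Omega$ since $d_i \le d_k = k < n-1$, each has degree at most $k-m \le 1$ in $G-\Omega$. Thus in $G-\Omega$ each of $x_1,\dots,x_k$ is either isolated or has a single neighbour.

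The main work is then a component count feeding into $4$-toughness. I would classify $x_1,\dots,x_k$ by their unique neighbour (if any) in $G-\Omega$: let $a$ be the number that are isolated, let $g$ be the number of mutually adjacent pairs among them (each such pair is a $K_2$-component of $G-\Omega$, since both endpoints have degree at most $1$ there), and let $b$ be the number whose unique neighbour lies outside $\{x_1,\dots,x_k\}$. These three types exhaust the list, so $a + b + 2g = k$. Now delete from $G$ the set $\Omega$ together with the (at most $b$) external neighbours $F$ of the $b$ vertices of the third type. The $a$ isolated vertices stay isolated, the $b$ vertices of the third type become isolated once $F$ is removed, and the $g$ pairs survive intact (both endpoints lie in $\{x_1,\dots,x_k\}$, hence not in $F$); this yields at least $a + b + g$ components using a cutset of size at most $m + b$. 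Since $G$ is $4$-tough, $4(a+b+g) \le m + b$, i.e. $4a + 3b + 4g \le m$. Rewriting $4a + 3b + 4g = 2(a+b+2g) + 2a + b = 2k + 2a + b$ and using $m \le k$ gives $k + 2a + b \le 0$, which contradicts $k \ge 8$. Hence $|\Omega| \le k-2$.

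The step I expect to be most delicate is precisely this count in the case $|\Omega| = k-1$: one must verify that a neighbour of $x_i$ inside $\{x_1,\dots,x_k\}$ is genuinely mutual and produces a clean $K_2$-component, that removing $F$ really isolates the third-type vertices, and that the bookkeeping $a+b+2g=k$ together with $|F| \le b$ is applied consistently. The case $|\Omega| = k$ is the degenerate instance $b = g = 0$, $a = k$ (every $x_i$ has degree $0$ in $G-\Omega$), where the inequality collapses to $4k \le k$; so both cases are subsumed by the single bound above, and no appeal to $P(4)$ is needed for this claim — only $4$-toughness and the universality of $\Omega$.
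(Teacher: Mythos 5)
Your proof is correct and takes essentially the same route as the paper's: both restrict attention to the $k$ lowest-degree vertices, observe that each of them is adjacent to all of $\Omega$ and hence has at most one neighbour outside $\Omega$, and contradict $4$-toughness using the cutset consisting of $\Omega$ together with these vertices' external neighbours (the paper's cutset $N(\kappa)$ is exactly your $\Omega\cup F$, and its component count $|D_0|+\tfrac{1}{2}|D_1|$ is exactly your $a+b+g$). The only difference is presentational: the paper splits into the cases $|\Omega|=k$ and $|\Omega|=k-1$ with cruder bounds ($|N(\kappa)|\le 2k-1$ against $c\ge \tfrac{k}{2}$), while you absorb both cases into one computation with the slightly finer $a,b,g$ bookkeeping.
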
 
		
		\begin{proofclaim}
			Suppose by contradiction that 	$|\Omega|\geq k-1$. Since $\Omega$ is a universal clique, for every vertex $x_i\in V(G)$, we have $\Omega\subseteq N(x_i) \cup\{x_i\}$, and so $d_i\geq |\Omega| \geq k-1$. If $|\Omega| > k$, then $d_1 > k$, a contradiction. Hence $ k-1 \leq |\Omega|\leq d_k = k$.

			Set $\kappa = \{x_i:i\leq k\}$. Note that $|\kappa|=k$. By definition, for all $x_i\in \kappa$, we have $d_i\leq d_k=k$ and $|N(x_i)\sm \Omega|\leq d_i-|\Omega|\leq 1$.  
			
			Define $D_0=\{x_i\in \kappa : |N_{\kappa}(x_i)|=0 \} $ and $D_1=\{x_i\in \kappa : |N_{\kappa}(x_i)|=1 \} $. It follows from this definition that $|D_0| + |D_1| = k$ and $c(G[\kappa]) \geq |D_0| + \frac{1}{2} |D_1|$.
			
			Suppose that $| \Omega | = k$. Since every vertex $v_i \in \kappa$ is adjacent to all vertices of $\Omega$ and $d_i \leq |\Omega|$, $v_i$ has no neighbor in $V \sm \Omega$. It follows that $N(\kappa) = \Omega$ and $\kappa$ is a stable set ($D_1 = \emptyset$). Consider the cutset $\Omega$. We have $c(G \sm \Omega) \geq |\kappa| = k$ and $|\Omega| = k$, a contradiction to the assumption that $G$ is 4-tough.
			
			 So, we may assume $| \Omega | = k -1$. Each vertex in $\kappa$ is adjacent to at most one vertex in $G \sm \Omega$. It follows that $|N(\kappa)| = |\Omega| + |N(\kappa) \sm \Omega| \leq (k-1) + k = 2k - 1$. We also have $c(G[\kappa]) \geq |D_0| + \frac{1}{2} |D_1| \geq \frac{1}{2} (|D_0| + |D_1|) \geq \frac{k}{2}$. Consider the cutset $N(\kappa)$. Note that $c(G \sm N(\kappa)) \geq c(G[\kappa]) \geq \frac{k}{2}$. Since $G$ is 4-tough, we have $2k-1 \geq |N(\kappa)|\geq 4c(G[\kappa])\geq 2k$,
			 a contradiction.
		\end{proofclaim}
		
		\begin{claim}	\label{cp4:taille_U}
			For all $\alpha<\frac{1}{2}$ such that $d_\alpha \leq \alpha$, we have $|U^\alpha|\geq \alpha-3$. 
		\end{claim}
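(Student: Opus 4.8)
The plan is to read this off directly from the hypothesis $P(4)$ together with the monotonicity of the degree sequence; there is essentially no combinatorial obstacle here, only a careful bookkeeping of indices. Recall that $U^\alpha=\{x_j : d_j\geq n-\alpha\}$ and that $P(4)$ asserts that for every $i<\frac{n}{2}$ with $d_i\leq i$ one has $d_{n-i+4}\geq n-i$. The whole point is that the hypothesis $d_\alpha\leq\alpha$ (with $\alpha<\frac{n}{2}$) is exactly the trigger for $P(4)$ at index $i=\alpha$.

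First I would apply $P(4)$ with $i=\alpha$. Since by assumption $d_\alpha\leq\alpha$ and $\alpha<\frac{n}{2}$, the predicate yields $d_{n-\alpha+4}\geq n-\alpha$. Hence $x_{n-\alpha+4}\in U^\alpha$. The second step is to invoke that $d_1\leq d_2\leq\dots\leq d_n$, so $d_j\geq d_{n-\alpha+4}\geq n-\alpha$ for every index $j\geq n-\alpha+4$. Consequently $\{x_j : n-\alpha+4\leq j\leq n\}\subseteq U^\alpha$. The final step is just to count this block of indices: it contains $n-(n-\alpha+4)+1=\alpha-3$ vertices, which gives $|U^\alpha|\geq\alpha-3$, as required.

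The only thing to be careful about is the range of validity of the index $n-\alpha+4$, which is the one point where a reader might balk. For the conclusion of $P(4)$ to refer to an actual term of the sequence we need $n-\alpha+4\leq n$, i.e. $\alpha\geq 4$. When $\alpha\leq 3$, however, the target bound $\alpha-3\leq 0$ is vacuous, since $|U^\alpha|\geq 0$ always holds; so I would dispose of that case in one line and run the argument above only for $\alpha\geq 4$ (where $1\le n-\alpha+4\le n$ holds because $\alpha<\frac n2$ forces $n-\alpha+4>0$ as well). I do not anticipate any genuine difficulty: the claim is a one-step consequence of $P(4)$ and non-decreasingness of the degree sequence, and it simply repackages the degree hypothesis into a lower bound on the size of the near-universal clique $U^\alpha$ for later use.
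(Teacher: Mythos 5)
Your proposal is correct and follows exactly the paper's own argument: apply $P(4)$ at the index $i=\alpha$ to get $d_{n-\alpha+4}\geq n-\alpha$, then use monotonicity of the degree sequence to count the $n-(n-\alpha+4)+1=\alpha-3$ vertices of degree at least $n-\alpha$. Your explicit treatment of the degenerate range $\alpha\leq 3$ (where the bound is vacuous and the index $n-\alpha+4$ would exceed $n$) is a small point of care that the paper omits, but it does not change the substance of the argument.
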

		
		\begin{proofclaim}
			Consider a vertex $v_{\alpha}$ with $d_{\alpha} \leq \alpha < \frac{1}{2}$. 
			Since $G$ satisfies $P(4)$, we have $d_{n- \alpha + 4} \geq n-\alpha$. This implies there are at least $ n-(n-\alpha+4)+1 = \alpha - 3$ vertices of degree at least $n - \alpha$. 
		\end{proofclaim}
		
		\begin{claim}	\label{cp4:lesAutres}
			For all $\alpha$ such that $ k+2 \leq \alpha < \frac{n}{2}$, we have  $d_\alpha > \alpha$.
		\end{claim}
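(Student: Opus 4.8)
The plan is to argue by contradiction. Suppose some index $\alpha$ with $k+2 \le \alpha < \frac{n}{2}$ satisfies $d_\alpha \le \alpha$, and among all such indices choose $\alpha$ to be minimum. The strategy is to show that $U^\alpha$ is then a \emph{universal} clique whose size exceeds the bound $k-2$ of~(\ref{cp4:tailleOmega}), which is the desired contradiction. Everything will rest on feeding the right hypothesis into~(\ref{cp4:cas_universal}) and then on the size estimate~(\ref{cp4:taille_U}).

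First I would record the mild lower bound $d_\alpha \ge \alpha - 2$, established in two ways. If $\alpha \ge k+3$, then $\alpha-1$ lies in the range $[k+2,\frac{n}{2})$ and is not a counterexample, so $d_{\alpha-1} > \alpha-1$ and hence $d_\alpha \ge d_{\alpha-1} \ge \alpha \ge \alpha - 2$. If $\alpha = k+2$, I instead use $d_\alpha \ge d_k = k = \alpha - 2$. The point of this bound is that for every $i \ge \alpha$ we get $d_i \ge d_\alpha \ge \alpha - 2 > \alpha - 3$; consequently \emph{every} index $i$ with $d_i < \alpha - 3$ must satisfy $i < \alpha$.

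The central step is to verify the hypothesis of~(\ref{cp4:cas_universal}), namely that $d_i \ge i-2$ for all $i$ with $d_i < \alpha - 3$. By the previous paragraph such indices obey $i < \alpha$, and for these the bound is routine bookkeeping on the monotone degree sequence: if $i < k$ then $d_i > i \ge i-2$; if $i = k$ then $d_k = k \ge k-2$; if $i = k+1$ then $d_{k+1} \ge d_k = k = (k+1)-1 \ge (k+1)-2$; and if $k+2 \le i < \alpha$ then minimality of $\alpha$ gives $d_i > i \ge i-2$. Thus~(\ref{cp4:cas_universal}) applies and $U^\alpha$ is a universal clique.

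Finally I would close with the size count. Since $d_\alpha \le \alpha < \frac{n}{2}$, Claim~(\ref{cp4:taille_U}) gives $|U^\alpha| \ge \alpha - 3 \ge (k+2)-3 = k-1$. But a universal clique is contained in the maximum universal clique $\Omega$, so $|U^\alpha| \le |\Omega| \le k-2$ by~(\ref{cp4:tailleOmega}), forcing $k-1 \le k-2$, a contradiction. I expect the only delicate point to be the case $\alpha = k+2$, where $d_\alpha$ cannot be pinned to $\alpha$ and one must instead lean on $d_\alpha \ge d_k = k$ to rule out indices $i \ge \alpha$ with $d_i < \alpha - 3$; the borderline indices $i \in \{k,k+1\}$ also have to be checked by hand rather than by the minimality argument, but both are immediate from $d_k = d_{k-1} = k$.
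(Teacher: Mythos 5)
Your proof is correct and follows essentially the same route as the paper's: choose a minimal counterexample $\alpha$, establish $d_\alpha \geq \alpha-2$ by splitting into the cases $\alpha = k+2$ and $\alpha > k+2$, verify the hypothesis of~(\ref{cp4:cas_universal}) so that $U^\alpha$ is a universal clique, and then contradict~(\ref{cp4:tailleOmega}) via the size bound of~(\ref{cp4:taille_U}). If anything, your explicit handling of the borderline indices $i \in \{k, k+1\}$ is slightly more careful than the paper's blanket assertion that $d_i \geq i$ for all $i \leq \alpha$.
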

		
		\begin{proofclaim}
			Suppose there exists $\alpha$ such that $ k+2 \leq \alpha < \frac{n}{2}$ and  $d_\alpha \leq \alpha$. Choose such $\alpha$ to be minimum.

			We will prove that $d_\alpha \geq \alpha -2$.  If $\alpha =k+2$ then $d_\alpha\geq d_k=k=\alpha -2$. If $\alpha >k+2$ then $\alpha -1\geq k+2$. By the choice of $\alpha$, it follows that $\alpha -1 < d_{\alpha-1}\leq d_\alpha \leq \alpha$ and so $d_\alpha =\alpha$.  In both cases, we have $d_\alpha \geq \alpha -2$.

			Consider a vertex $v_i$ with $k+2 \leq i < \alpha$. By the minimality of $\alpha$, we have $d_i > i$. The choice of $k$ implies that $d_i > i$ for $1 \leq i < k$, and that $d_k = k$. All of this implies $d_i \geq i$ for $1 \leq i \leq \alpha$. In addition, for all $i > \alpha$, we have $d_i \geq d_\alpha \geq \alpha -2$.  By (\ref{cp4:cas_universal}), $U^\alpha$ is a universal clique. By (\ref{cp4:taille_U}), we have $|U^\alpha|\geq \alpha -3$. By (\ref{cp4:tailleOmega}), we have $k-2\geq |\Omega|\geq |U^\alpha|\geq \alpha -3$ and so $k+1\geq \alpha$, a contradiction. 
		\end{proofclaim}
		
		\begin{claim}\label{cp4:onFixeTout}
			$n$ is even, $d_{k+1}=k $ and $k=\frac{n}{2}-2$.
		\end{claim}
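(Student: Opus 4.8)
The plan is to first pin down the maximum universal clique $\Omega$ of $G$ exactly, and then convert $4$-toughness into a vertex count on $H:=G-\Omega$. Write $\omega:=|\Omega|$.

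I would begin by showing $U^{k}=U^{k+1}=\Omega$ with $k-3\le\omega\le k-2$. Applying (\ref{cp4:taille_U}) at $\alpha=k$ gives $|U^k|\ge k-3$, and the hypothesis of (\ref{cp4:cas_universal}) is met for both $\alpha=k$ and $\alpha=k+1$: any $i$ with $d_i<\alpha-3\le k-2$ has $d_i<d_k=k$, so $i<k$, whence $d_i>i\ge i-2$ by the minimality of $k$. Thus $U^k$ and $U^{k+1}$ are universal cliques; since a universal clique is exactly a set of degree-$(n-1)$ vertices and every such vertex already lies in $U^k$, both equal $\Omega$, and together with (\ref{cp4:tailleOmega}) this yields $k-3\le\omega\le k-2$. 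In particular $U^{k}=U^{k+1}$ shows no vertex has degree $n-k-1$, and I record the one-way implication that, by (\ref{cp4:taille_U}) applied at $\alpha=k+1$, the inequality $d_{k+1}\le k+1$ would already force $\omega=k-2$ (so that $\omega=k-3$ is possible only when $d_{k+1}\ge k+2$).

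Next I would pass to $H:=G-\Omega$. As $\Omega$ is universal, each low vertex $x_i$ ($i\le k$) is complete to $\Omega$, so $\omega\le d_i\le d_k=k$ and $\deg_H(x_i)=d_i-\omega\le k-\omega\le 3$; hence the low vertices induce in $H$ a subgraph of maximum degree at most $3$ (at most $2$ when $\omega=k-2$). The engine is then $4$-toughness applied to two cutsets: the neighbourhood of a maximum independent set $I$ of low vertices, which isolates $|I|$ vertices and so gives $4|I|\le\omega+|I|(k-\omega)$; and the set $\Omega$ together with the non-low vertices of $H$ (located by (\ref{cp4:lesAutres})), whose deletion leaves $\{x_1,\dots,x_k\}$ as a disjoint union of paths and cycles. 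Bounding the number of components of this linear forest from below (by vertices minus edges, the edge count being controlled by $\sum_{i\le k}(d_i-\omega)$) and from above by $\tfrac14$ of the size of the deleted set, I would exclude $\omega=k-3$ (equivalently $d_{k+1}\ge k+2$), fix $d_{k+1}=k$, and determine the number of non-low vertices of $H$, which yields $n=2k+4$; in particular $n$ is even and $k=\tfrac n2-2$.

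I expect this final counting step to be the main obstacle. Every appeal to $P(4)$ above leaves $n$ free — it cancels, pinning only $\omega$ relative to $k$ — so the exact relation $n=2k+4$ and the exact value $d_{k+1}=k$ must come from toughness, and the generic independence-number and component estimates only bracket the relevant quantities. Matching them requires understanding precisely how the few non-$\Omega$ neighbours of the low vertices are distributed, i.e.\ the exact path-and-cycle structure they induce in $H$, which is the delicate heart of the argument.
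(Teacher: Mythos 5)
Your preliminary structure is a correct repackaging of the paper's earlier claims ($U^{k}=\Omega$, $k-3\le|\Omega|\le k-2$, low vertices complete to $\Omega$ with at most $k-|\Omega|\le 3$ neighbours outside), apart from one slip: invoking (\ref{cp4:cas_universal}) and (\ref{cp4:taille_U}) at $\alpha=k+1$ requires $k+1<\frac{n}{2}$, which is not known at this stage. But the proof stops exactly where the statement begins. Everything the claim asserts --- $n$ even, $d_{k+1}=k$, $k=\frac{n}{2}-2$ --- is deferred to a final ``counting step'' that you describe only as a plan and yourself flag as ``the main obstacle'' and ``the delicate heart of the argument''. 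That step \emph{is} the proof, and it is missing. There is also a logical slip inside the plan: your recorded implication is $d_{k+1}\le k+1\Rightarrow\omega=k-2$, so excluding $\omega=k-3$ is \emph{not} ``equivalently $d_{k+1}\ge k+2$'' and leaves $d_{k+1}$ completely unpinned; nothing in the plan ever rules out $d_{k+1}=k+1$.

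The deeper problem is that the proposed engine cannot supply what is needed. Look at what your two toughness inequalities can deliver: the first, $4|I|\le\omega+|I|(k-\omega)$, contains no $n$ at all; the second, $4\,c(G[\{x_1,\dots,x_k\}])\le n-k$, bounds $n$ from \emph{below}. The essence of the claim, however, is the \emph{upper} bound $n\le 2k+4$ (plus the parity of $n$, which no cutset inequality can see). The paper obtains this from the closure condition (\ref{c:edge}) used directly, not through the earlier claims: set $\nu=\lfloor\frac{n-1}{2}\rfloor$; from (\ref{cp4:lesAutres}) and the minimality of $k$ one has $d_i\ge i-1$ for all $i<\frac{n}{2}$, so $d_\nu\ge\frac{n}{2}-2$; if $d_\nu\ge\frac{n-1}{2}-1$, then $d_\nu=n-1$ is impossible by (\ref{cp4:tailleOmega}), so one may take $i$ maximal with $x_ix_\nu\notin E$, and (\ref{c:edge}) gives $d_i<n-3-d_\nu\le d_\nu$, hence $i<\nu$, while maximality of $i$ gives $d_\nu\ge n-i-1$, so $d_i+d_\nu\ge(i-1)+(n-i-1)=n-2$, contradicting (\ref{c:edge}). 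Thus $\frac{n}{2}-2\le d_\nu<\frac{n-1}{2}-1$, and integrality of $d_\nu$ forces $n$ even and $d_{\frac{n}{2}-1}=\frac{n}{2}-2$; then (\ref{cp4:lesAutres}) and the choice of $k$ immediately give $k=\frac{n}{2}-2$ and $d_{k+1}=k$. Any correct completion of your approach would have to bring (\ref{c:edge}) into the counting in essentially this way; toughness-only cutset estimates, which can only certify that cutsets are large, will not close the gap.
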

		
		\begin{proofclaim}
			We will first prove that $d_{\lfloor\frac{n-1}{2}\rfloor}=\frac{n}{2}-2$. For simplicity, let $\nu =\lfloor\frac{n-1}{2}\rfloor$.
			
			Observe that, since $d_k=k$, we have $d_{k+1}\geq k$. By (\ref{cp4:lesAutres}) and the choice of $k$, for all $i<\frac{n}{2}$, we have  $d_i> i$ or $i\in \{k,k+1\}$. Hence $d_i\geq i-1$ for $i < \frac{n}{2}$. Therefore $d_{\nu}\geq \nu -1\geq \frac{n}{2}-2$ (because $\nu\geq \frac{n}{2}-1$). 
			
			Suppose that  $d_{\nu}\geq \frac{n-1}{2}-1$.

			If $d_{\nu}=n-1$ then  $|\Omega|> \frac{n}{2}$. It follows from (\ref{cp4:tailleOmega}) that $k - 2 \geq \frac{n}{2}$, a contradiction to the choice of $k$ ($k < \frac{n}{2}$). Hence there is some $i$ such that $x_ix_{\nu }\notin E$. Choose $i$ to be maximum. By (\ref{c:edge}), we have $d_i<n-3-d_\nu \leq n-3-( \frac{n-1}{2}-1)= \frac{n-1}{2}-1 \leq d_{\nu}$. Hence $i< \nu$. By previous observation,  we have $d_i\geq i-1$. By the choice of $i$, $x_{\nu}$ is adjacent to all vertices $x_j$ with $j > i, j \not= \nu$.  So, we have $d_\nu \geq n-i-1$ and $d_{\nu}+d_i\geq n-2$, a contradiction to (\ref{c:edge}). We conclude that $\frac{n}{2} -2  \leq d_{\nu} < \frac{n-1}{2} -1$.  
		
		 Now, since $d_\nu$ is an integer, we know that $n$ is even, $d_\nu= \frac{n}{2}-2$ and $\nu=\frac{n}{2}-1$. Hence $d_{\frac{n}{2}-1}= \frac{n}{2}-2$. Therefore  $d_{\frac{n}{2}-2}\leq \frac{n}{2}-2$. If $k < \frac{n}{2} - 2$, then
		 (\ref{cp4:lesAutres}) implies $d_{\frac{n}{2}-1} > \frac{n}{2} -1$, a contradiction. The choice of $k$ implies $k = \frac{n}{2}-2$, and  so $d_{k+1}=k $.
		\end{proofclaim}

		\begin{claim}\label{cp4:FxeOmega}
			$\Omega=U^{\frac{n}{2}-1}$ and $|\Omega|=\frac{n}{2}-4$.
		\end{claim}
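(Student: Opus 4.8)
The plan is to identify $\Omega$ with the explicit set $U^{\frac{n}{2}-1}=\{x_j : d_j\geq \frac{n}{2}+1\}$ and then pin down its cardinality by squeezing it between the two bounds on $|\Omega|$ already available. First I would show that $U^{\frac{n}{2}-1}$ is a universal clique by invoking (\ref{cp4:cas_universal}) with $\alpha=\frac{n}{2}-1$. To apply it I must check its hypothesis: for every $i$ with $d_i<\alpha-3=\frac{n}{2}-4$ one has $d_i\geq i-2$. Here I would use the degree lower bound $d_i\geq i-1$ valid for all $i<\frac{n}{2}$ (which follows from the minimality of $k$, from $d_k=d_{k+1}=k$ recorded in (\ref{cp4:onFixeTout}), and from (\ref{cp4:lesAutres})), together with the equality $d_{\frac{n}{2}-1}=\frac{n}{2}-2$ established in (\ref{cp4:onFixeTout}). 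Since $d$ is non-decreasing and $d_{\frac{n}{2}-1}=\frac{n}{2}-2>\frac{n}{2}-4$, any index $i$ with $d_i<\frac{n}{2}-4$ satisfies $i<\frac{n}{2}$, whence $d_i\geq i-1\geq i-2$; the hypothesis holds and $U^{\frac{n}{2}-1}$ is a universal clique.

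Next I would prove the set equality $\Omega=U^{\frac{n}{2}-1}$ by double inclusion. Because $U^{\frac{n}{2}-1}$ is a universal clique, all its vertices are universal, so it is contained in the maximum universal clique, giving $U^{\frac{n}{2}-1}\subseteq \Omega$. Conversely, every vertex of $\Omega$ has degree $n-1\geq \frac{n}{2}+1$, so $\Omega\subseteq \{x_j : d_j\geq \frac{n}{2}+1\}=U^{\frac{n}{2}-1}$. Hence the two sets coincide.

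Finally, for the cardinality I would squeeze $|\Omega|$ between the two bounds proved earlier. Using (\ref{cp4:onFixeTout}) we have $k=\frac{n}{2}-2$, so (\ref{cp4:tailleOmega}) gives $|\Omega|\leq k-2=\frac{n}{2}-4$. For the matching lower bound I apply (\ref{cp4:taille_U}) with $\alpha=\frac{n}{2}-1$, which is legitimate since $d_{\frac{n}{2}-1}=\frac{n}{2}-2\leq \frac{n}{2}-1$; this yields $|\Omega|=|U^{\frac{n}{2}-1}|\geq \alpha-3=\frac{n}{2}-4$. Combining the two estimates gives $|\Omega|=\frac{n}{2}-4$.

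I expect the only genuinely delicate point to be the verification of the hypothesis of (\ref{cp4:cas_universal}): one must be careful that all vertices of small degree (less than $\frac{n}{2}-4$) really have index below $\frac{n}{2}$, which is exactly what the equality $d_{\frac{n}{2}-1}=\frac{n}{2}-2$ guarantees — without it the bound $d_i\geq i-1$ would not be available for the relevant indices. Everything else is bookkeeping with the previously established inequalities and the observation that a universal clique consists of vertices of degree $n-1$.
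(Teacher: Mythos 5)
Your proposal is correct and follows essentially the same route as the paper: apply (\ref{cp4:cas_universal}) with $\alpha=\frac{n}{2}-1$ to get that $U^{\frac{n}{2}-1}$ is a universal clique, bound $|U^{\frac{n}{2}-1}|\geq\frac{n}{2}-4$ via (\ref{cp4:taille_U}), and squeeze against the upper bound $|\Omega|\leq k-2=\frac{n}{2}-4$ from (\ref{cp4:tailleOmega}) and (\ref{cp4:onFixeTout}). The only difference is that you spell out two points the paper leaves implicit — the verification of the hypothesis of (\ref{cp4:cas_universal}) and the double inclusion giving the set equality $\Omega=U^{\frac{n}{2}-1}$ — both of which you handle correctly.
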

		
		\begin{proofclaim}
			By (\ref{cp4:onFixeTout}), we have  $k=\frac{n}{2}-2$ and  $d_{k+1}=k<\frac{n}{2}-1 $. By (\ref{cp4:cas_universal}), $U^{\frac{n}{2}-1}$ is a universal clique. Hence, by (\ref{cp4:taille_U}), we have  $|\Omega|\geq |U^{\frac{n}{2}-1}|\geq \frac{n}{2}-4$ . By (\ref{cp4:tailleOmega}), we have  $k-2\geq |\Omega|\geq \frac{n}{2}-4$ and so $|\Omega|= \frac{n}{2}-4$.
		\end{proofclaim}

		\begin{claim}\label{cp4:Fxeautre}
			For all $i$, either $x_i\in \Omega$ or $d_i\leq \frac{n}{2}-2$.
		\end{claim}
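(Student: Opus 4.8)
The plan is to reformulate the statement and then derive a contradiction from a toughness--independence estimate. By Claim~(\ref{cp4:FxeOmega}) we have $\Omega=U^{\frac n2-1}$, so $x_i\in\Omega$ is equivalent to $d_i\ge\frac n2+1$. Consequently the assertion ``$x_i\in\Omega$ or $d_i\le\frac n2-2$'' is the same as saying that no vertex of $G$ has degree $\frac n2-1$ or $\frac n2$. I would therefore suppose, for contradiction, that some vertex $x_i$ has $d_i\in\{\frac n2-1,\frac n2\}$ and aim to produce a large independent set.

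First I would examine the non-neighbours of $x_i$. Let $M$ be the set of vertices distinct from $x_i$ and not adjacent to $x_i$, so that $|M|=n-1-d_i\ge\frac n2-1$. For each $x_j\in M$ we have $x_ix_j\notin E$, so the contrapositive of Claim~(\ref{c:edge}) gives $d_i+d_j\le n-4$, whence $d_j\le n-4-d_i\le\frac n2-3$. By Claim~(\ref{cp4:FxeOmega}) the set $\Omega$ is a universal clique with $|\Omega|=\frac n2-4$; since every vertex is adjacent to all of $\Omega$, each $x_j\in M$ has at most $d_j-|\Omega|\le 1$ neighbour outside $\Omega$, and in particular at most one neighbour inside $M$. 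Thus $G[M]$ has maximum degree at most $1$, i.e.\ it is a matching together with isolated vertices, and so $G$ contains an independent set of size at least $\lceil|M|/2\rceil\ge\lceil(\frac n2-1)/2\rceil$.

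Next I would invoke the standard bound on the independence number of a tough graph. For any independent set $I$ with $|I|\ge 2$, the set $N(I)$ is a cut-set of $G$ and every vertex of $I$ is isolated in $G[V\setminus N(I)]$, so $c(G[V\setminus N(I)])\ge|I|$; since $G$ is $4$-tough and $N(I)\cap I=\emptyset$, this yields $4|I|\le|N(I)|\le n-|I|$, i.e.\ $|I|\le\frac n5$. Applying this to the independent set produced above gives $\tfrac12(\tfrac n2-1)\le\frac n5$, which forces $n\le 10$. This contradicts $n\ge 20$, which holds because $k=\frac n2-2\ge 8$ was established earlier. The contradiction proves the claim, and notably it disposes of both degrees $\frac n2-1$ and $\frac n2$ simultaneously.

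The delicate point I expect is not the toughness inequality but the middle observation: although the non-neighbour set $M$ need not be independent, the universal clique $\Omega$ absorbs all but at most one edge at each such (necessarily low-degree) vertex, which forces $G[M]$ to have maximum degree at most one. This is exactly what converts $M$ into a large independent set and lets the crude count $\alpha(G)\ge|M|/2$ beat the toughness ceiling $\alpha(G)\le\frac n5$; the numerics close precisely because the earlier claims guarantee $n\ge 20$.
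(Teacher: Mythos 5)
Your proof is correct, but it follows a genuinely different route from the paper's. The paper never invokes toughness in this claim: it takes the offending vertex $x_\alpha$ (so $x_\alpha\notin\Omega$ and $d_\alpha\geq\frac{n}{2}-1$, hence $d_\alpha\leq\frac{n}{2}$ since $x_\alpha\notin U^{\frac{n}{2}-1}$), picks the \emph{largest} index $j$ with $x_jx_\alpha\notin E$, deduces $d_j<\frac{n}{2}-2$ from (\ref{c:edge}), hence $j<k$ and $d_j>j$ by the choice of $k$; then maximality of $j$ forces $d_\alpha\geq n-j-1$, giving $d_j+d_\alpha>n-1$ and contradicting (\ref{c:edge}) again. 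This is the same ``largest non-neighbour'' trick already used in (\ref{cp4:cas_universal}) and (\ref{cp4:onFixeTout}), and it needs only the closure condition and the degree-sequence ordering. You instead argue globally: the non-neighbourhood $M$ of a vertex of degree $\frac{n}{2}-1$ or $\frac{n}{2}$ consists of low-degree vertices whose adjacencies are almost entirely absorbed by the universal clique $\Omega$ (here you need the exact size $|\Omega|=\frac{n}{2}-4$ from (\ref{cp4:FxeOmega})), so $G[M]$ has maximum degree at most one, yielding an independent set of size roughly $\frac{n}{4}$, which beats the toughness ceiling $\alpha(G)\leq\frac{n}{5}$ valid in any $4$-tough graph; the numerics close because $n\geq 20$ follows from $k=\frac{n}{2}-2\geq 8$. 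All the facts you cite ((\ref{c:edge}), (\ref{cp4:FxeOmega}), (\ref{cp4:onFixeTout}), $k\geq 8$) precede the claim, so there is no circularity, and each step (the degree bound $d_j\leq\frac{n}{2}-3$ on $M$, the maximum-degree-one structure, the cut-set $N(I)$ argument) is sound. What the two approaches buy: the paper's is more elementary and self-contained, reusing a local argument with no dependence on $n\geq 20$ or on toughness; yours exposes extra structure (near-independence of the non-neighbourhood) and illustrates how the toughness hypothesis can substitute for closure-style index chasing, at the cost of needing the sharper quantitative inputs.
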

		
		\begin{proofclaim}
			
			Suppose by contradiction that there exists $\alpha$ such that $x_\alpha\notin \Omega$ and  $ d_\alpha \geq \frac{n}{2}-1$. Choose $\alpha$ to be minimum (i.e., for all $j<\alpha$, $d_j\leq \frac{n}{2}-2$).
			
			
			Since $x_\alpha \notin U^{\frac{n}{2}-1} \subseteq \Omega$, we have $d_\alpha< n-\frac{n}{2}+1$. Hence  $d_\alpha\leq \frac{n}{2}$.  
		
			Let $j$ be the largest subscript such that $x_jx_\alpha\notin E$. We must have $d_j < \frac{n}{2} - 2$, for otherwise we have $d_j + d_{\alpha} \geq \frac{n}{2} - 1 + \frac{n}{2} - 2 = n - 3$, a contradiction to (\ref{c:edge}). It follows that $j < k$ (recall that $k = \frac{n}{2} -2$) and so by choice of $k$, we have $d_j > j$. Now, the choice of $j$ implies that $x_{\alpha}$ is adjacent to all vertices $x_i$ with $i > j, i \not= \alpha$. That is, $d_{\alpha} \geq n - j -1$. But now we have $d_j + d_{\alpha} > j + n - j - 1 = n -1$, a contradiction to (\ref{c:edge}). 
			
		\end{proofclaim}

		Since $G$ is $4$-tough, we have $c(G[V\sm \Omega])\leq \frac{1}{4} |\Omega|$. Hence $c(G[V\sm \Omega])\leq \frac{n}{8}-1$, by (\ref{cp4:FxeOmega}).
		
		Let $C_1,C_2,\dots C_l$ be the connected components of $G[V\sm \Omega]$ (with $l\leq \frac{n}{8}-1$).
		
		For all $x_i\in V(G)\sm \Omega$, we have   $|N_{V(G)\sm \Omega}(x_i)| \leq d_i - |\Omega|$. By (\ref{cp4:FxeOmega}) and (\ref{cp4:Fxeautre}), we have $|N_{V(G)\sm \Omega}(x_i)|\leq 2$. Hence, for all $i\leq l$, $C_i$ is either a path, a cycle or an isolated vertex. Hence there exits a path $P^i$  (possibly not induced) covering $C_i$. Denote $a_i$ and $b_i$ the endpoints of $P^i$ (possibly $a_i=b_i$). 
		
		Since $|\Omega|\geq \frac{n}{2}-4$, for all $i\leq \frac{n}{8}-1$, we have  $x_{n-i}\in \Omega$. 
		
		Now build the following cycle (see Figure~\ref{f:hamilpaht} with 3 connected components in $G[V\sm \Omega]$). Start with $x_n$. For all $i\leq l$, add the path $a_iP^ib_ix_{n-i}$. Finally, add every vertex remaining in $\Omega$ to the cycle. This is a Hamiltonian cycle  of $G$. 
	\end{proof}
	
	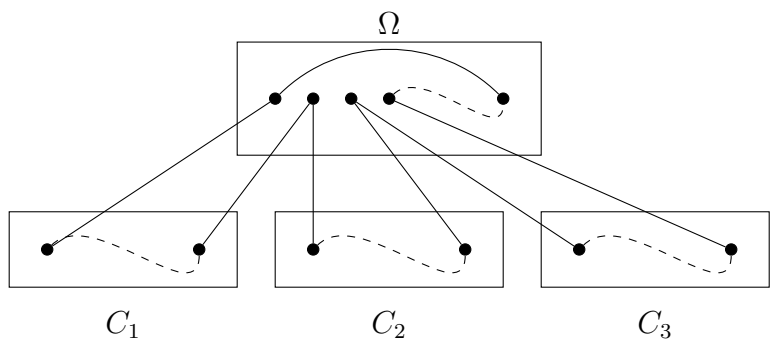
\begin{figure}
		\centering
		\begin{tikzpicture}[scale=1]
			
			\begin{scope}[xshift=1cm,yshift=0cm]
				
				\draw	(0,0.75) rectangle (4,2.25);
				
				\node[vertex] (ao) at (0.5,1.5) {};
				\node[vertex] (bo) at (1,1.5) {};
				\node[vertex] (co) at (1.5,1.5) {};
				\node[vertex] (do) at (2,1.5) {};
				\node[vertex] (eo) at (3.5,1.5) {};
				
				\node[-] (name) at (2,2.5) {$\Omega$};
				
				\draw[dashed] (do) to[out=45,in=-90] (eo);
				
			\end{scope}
			
			\begin{scope}[xshift=-2cm,yshift=-1cm]
				
				\draw	(0,0) rectangle (3,1);
				
				\node[vertex] (a1) at (0.5,0.5) {};
				\node[vertex] (b1) at (2.5,0.5) {};
				
				\node[-] (name) at (1.5,-0.5) {$C_1$};
				
				\draw[dashed] (a1) to[out=45,in=-90] (b1);
				
			\end{scope}
			
			\begin{scope}[xshift=1.5cm,yshift=-1cm]
				
				\draw	(0,0) rectangle (3,1);
				
				\node[vertex] (a2) at (0.5,0.5) {};
				\node[vertex] (b2) at (2.5,0.5) {};
				
				\node[-] (name) at (1.5,-0.5) {$C_2$};
				
				\draw[dashed] (a2) to[out=45,in=-90] (b2);
				
			\end{scope}
			
			\begin{scope}[xshift=5cm,yshift=-1cm]
				
				\draw	(0,0) rectangle (3,1);
				
				\node[vertex] (a3) at (0.5,0.5) {};
				\node[vertex] (b3) at (2.5,0.5) {};
				
				\node[-] (name) at (1.5,-0.5) {$C_3$};
				
				\draw[dashed] (a3) to[out=45,in=-90] (b3);
				
			\end{scope}
			
			\draw[-] (ao) -- (a1);
			\draw[-] (b1) -- (bo);
			\draw[-] (a2) -- (bo);
			\draw[-] (b2) -- (co);
			\draw[-] (co) -- (a3);
			\draw[-] (do) -- (b3);
			\draw[-] (eo) to[bend right=45] (ao);

		\end{tikzpicture}
		\caption{The final Hamiltonian cycle for proof of Lemma~\ref{t:notre}}
	\end{figure}\label{f:hamilpaht}

	\section{Conclusion and open problems}
	
	In this paper, we study properties of tough graphs, and an extension of  Chv\'atal's degree condition for the existence of Hamiltonian cycles in a graph. Chv\'atal's theorem (Theorem~\ref{t:chvatal}) is in a sense best possible because (as we mentioned in the Introduction) if a graph $G$ does not satisfy Chv\'atal's degree condition, then there is a graph $G'$ that degree-majorizes $G$ and is not Hamiltonian. A necessary condition for a graph to be Hamiltonian is that it has to be 1-tough. So, one may ask the following: 
	
	\noindent {\bf Open Problem} {\it 
	Characterize a degree condition $P$ such that if a 1-tough graph satisfies $P$ then it is Hamiltonian, but if $G$ does not satisfy $P$ then there is a 1-tough graph $G'$ such that $G'$ degree-majorizes $G$, but $G'$ is not Hamiltonian. 
    }
	
	Theorem~\ref{t:Chinh} is a first step in this direction. Jung \cite{jung_1987} proved that if $G$ is a 1-tough $n$-vertex  graph with $n \geq 11$ and $d(x) + d(y) \geq n-4$ for any two non-adjacent vertices $x$ and $y$, then $G$ is Hamiltonian. Thus the degree condition in the Open Problem must contain the sequence $n-2, \ldots, n-2$. 
	 
	Even though the $t$-Closure Lemma is a step forward to prove Conjecture~\ref{t:conj_generale}, it is not enough to prove the step further that is when $t\geq 5$. Other tools would be needed. 
	Nevertheless, we believe that the $t$-Closure Lemma could be useful in investigating the relationship between tough graphs and Hamiltonian cycles regardless of the degree condition. 
	
	\vspace*{1em}
	
	\noindent {\bf Acknowledgement} This work was supported by the Canadian Tri-Council Research
	Support Fund. The author C.T. Ho\`ang was supported by individual NSERC Discovery Grant.
	
	\bibliography{tough-hamiltonian-final}
\end{document}